\newcommand{\N}	{\mathbb N}
\newcommand{\Z}	{\mathbb Z}
\newcommand{\R}	{\mathbb R}
\newcommand{\diam}	{\operatorname{diam}} 
\newcommand{\AG}	{A_\Gamma}
\newcommand{\Sph}	{\mathbb S^2}
\newcommand{\sier}{Sierpi\'nski }
\newtheorem{theorem}{Theorem}[section]
\newtheorem{proposition}[theorem]{Proposition}
\newtheorem{lemma} [theorem]{Lemma} 
\newtheorem{cor} [theorem]{Corollary}
\newtheorem*{theorem*} {Theorem} 
\newtheorem*{prop*}{Proposition}
\newtheorem*{lem*} {Lemma} 
\newtheorem*{cor*} {Corollary}
\theoremstyle{definition}
\newtheorem{definition}[theorem]{Definition}
\newtheorem{example}[theorem]{Example}
\newtheorem{remark}[theorem]{Remark}
\newtheorem*{defi*}{Definition}
\newtheorem*{example*}{Example}
\newtheorem*{remark*}{Remark}
\newtheorem*{problem*}{Problem}
\newtheorem*{convention*}{Convention}
\newtheoremstyle{citing}
  {3pt}
  {3pt}
  {\itshape}
  {}
  {\bfseries}
  {}
  {.5em}
  {\thmnote{#3}}
\theoremstyle{citing}
\newtheorem*{varthm}{}
\author{Ruth Charney}
\author{Matthew Cordes}
\author{Alessandro Sisto}
\title{Complete topological descriptions of certain Morse boundaries}
\begin{document}

\thanks{Charney was partially supported by NSF grant DMS-1607616.}  
\thanks{Cordes was partially supported by the ETH Zurich Postdoctoral Fellowship Program, cofunded by a Marie Curie Actions for People COFUND Program.}
\thanks{Sisto was partially supported by the Swiss National Science Foundation (grant \#182186), as well as partially supported by the grant 346300 for IMPAN from the Simons Foundation and the matching 2015-2019 Polish MNiSW fund.}

\maketitle

\begin{abstract}
We study direct limits of embedded Cantor sets and embedded \sier curves.  We show that under appropriate conditions on the embeddings, all limits of Cantor spaces give rise to homeomorphic spaces, called $\omega$-Cantor spaces,  and similarly, all limits of \sier curves give homeomorphic spaces, called to $\omega$-\sier curves.   We then show that the former occur naturally as Morse boundaries of right-angled Artin groups and fundamental groups of non-geometric graph manifolds, while the latter occur as Morse boundaries of fundamental groups of  finite-volume, cusped hyperbolic 3-manifolds. 
\end{abstract}

\section{Introduction}

Many geodesic metric spaces which are not Gromov hyperbolic nonetheless display some hyperbolic-like behavior.  The Morse boundary was introduced in \cite{charney-sultan} and  \cite{CordesMorseBoundaries} with the goal of identifying and encoding this behavior in a useful way. It is defined for any proper geodesic metric space (though in some cases it may be empty), and a key property states that is invariant under quasi-isometries.  In particular, the Morse boundary of a finitely generated group $G$ is well-defined and provides a quasi-isometry invariant for the group.

Set theoretically, the Morse boundary of $X$ consists of equivalence classes of geodesic rays satisfying the Morse property, namely, the property that any $(\lambda,\epsilon)$-quasi-geodesics with endpoints on the ray stays bounded distance from the ray, where the bound, $N = N(\lambda, \epsilon)$, depends only on $\lambda$ and $\epsilon$.  If one restricts the Morse gauge $N$, then the corresponding boundary points can be given a natural topology.  This space is denoted $\partial_M^N X$. The Morse boundary $\partial_M X$ is defined as the direct limit of these spaces.   (See Section \ref{sec:background} below for formal definitions.)  

It is shown in \cite{cordes-hume} that the intermediate strata $\partial_M^N X$ are, in essence, boundaries of hyperbolic spaces and thus have many nice properties.  Using the direct limit topology on $\partial_M X$ has the advantage that many questions can be reduced to understanding these intermediate spaces.  Indeed, the Morse boundary has been shown to have many properties analogous to boundaries of hyperbolic spaces (see eg., \cite{cordes-hume}, \cite{quasi-mobius}, \cite{Murray:dynamics}, \cite{Liu:dynamics}, \cite{Zalloum}).  On the other hand, the limit space itself can be quite complicated. In particular, if $X$ itself is not a hyperbolic space, then the topology on $\partial_M X$ is neither compact nor metrizable. However, in this paper we provide the first complete topological descriptions of non-compact Morse boundaries, showing that, at least sometimes, Morse boundaries are more ``accessible'' as topological spaces than previously thought.

More specifically, we study the Morse boundaries of right-angled Artin groups, fundamental groups of non-geometric graph manifolds, and fundamental groups of finite-volume cusped hyperbolic 3-manifolds.  Understanding these boundaries involves not only characterizing the strata $\partial_M^N X$, but also understanding how they sit inside each other.   As we will discuss below, the process is more subtle than it at first appears.

\subsection*{Direct limits of Cantor spaces and right-angled Artin groups}
For the case of a right-angled Artin group we prove:

\begin{theorem}\label{thm:Artin_groups}
 Let $A_\Gamma$ be a right-angled Artin group, for $\Gamma$ a finite graph. Then $\partial_M A_\Gamma$ satisfies exactly one of the following:
 \begin{enumerate}
  \item $\partial_M A_\Gamma$ is empty,
  \item $\partial_M A_\Gamma$ consists of two points,
  \item $\partial_M A_\Gamma$ is a Cantor space, or
  \item $\partial_M A_\Gamma$ is an $\omega$-Cantor space.
 \end{enumerate}
\end{theorem}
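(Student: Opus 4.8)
The plan is to partition the finite graphs $\Gamma$ into four classes mirroring the four conclusions, dispose of three of them with known results, and concentrate on the fourth. For the reductions: if $\Gamma$ has no vertices then $A_\Gamma$ is trivial and $\partial_M A_\Gamma=\emptyset$; if $\Gamma=\Gamma_1\freeproduct\Gamma_2$ is a nontrivial join then $A_\Gamma\cong A_{\Gamma_1}\times A_{\Gamma_2}$ is a direct product of two infinite groups, whence $\partial_M A_\Gamma=\emptyset$ by the standard fact that from any point of such a product one may detour through the complementary factor along a quasigeodesic escaping any prescribed neighbourhood of a given ray, so we are in case~(1); if $\Gamma$ is a single vertex then $A_\Gamma\cong\Z$ and $\partial_M A_\Gamma$ is the two-point space $S^0$, case~(2); and if $\Gamma$ has at least two vertices and no edge then $A_\Gamma\cong F_n$ with $n\geq2$ is hyperbolic, so $\partial_M A_\Gamma=\partial_\infty F_n$ is a Cantor space, case~(3). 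A finite graph belonging to none of these classes is exactly one that is \emph{not} a join and has at least one edge; for such $\Gamma$ one must show $\partial_M A_\Gamma$ is an $\omega$-Cantor space, which is the heart of the argument.

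So assume $\Gamma$ is not a join and has an edge. I would work in the $\mathrm{CAT}(0)$ cube complex $X=X_\Gamma$ on which $A_\Gamma$ acts properly and cocompactly (the universal cover of the Salvetti complex), using the known description of Morse geodesics in RAAGs: a geodesic ray is Morse, with gauge controlled by $N$, precisely when it escapes, at a rate governed by $N$, every product subcomplex of $X$ corresponding to a coset $gA_\Lambda$ with $\Lambda\leq\Gamma$ a join subgraph. Since $A_\Gamma$ is acylindrically hyperbolic (true for every non-join RAAG on $\geq2$ vertices), it contains Morse elements, so $\partial_M A_\Gamma\neq\emptyset$. For each Morse gauge $N$, \cite{cordes-hume} presents $\partial_M^N A_\Gamma$ as the Gromov boundary of a proper hyperbolic space $Y_N$, hence as a compact metrizable space; I claim that for $N$ in a suitable cofinal family each nonempty $\partial_M^N A_\Gamma$ is a Cantor space. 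Total disconnectedness comes from the fact that Morse rays in a RAAG branch in an essentially tree-like fashion: two inequivalent Morse rays from a basepoint separate after finitely many ``blocks'' and thereafter stay uniformly far apart — which one can extract from a Bass--Serre splitting $A_\Gamma=A_{\Gamma\setminus v}\freeproduct_{A_{\mathrm{lk}(v)}}A_{\mathrm{st}(v)}$ or from hyperplane and disc-diagram arguments in $X$ — so in fact $Y_N$ is a quasi-tree. Having no isolated points follows by approximating a given Morse ray of gauge $\leq N$ by distinct Morse rays of comparable gauge, obtained by branching off along a translate of a fixed Morse axis at a far-out point; choosing the cofinal family with enough slack absorbs the bounded loss of gauge incurred by the concatenation. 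A compact, metrizable, perfect, totally disconnected space is a Cantor space.

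It remains to identify the direct limit. Since $\Gamma$ has an edge $st$, the flat $\langle s,t\rangle\cong\Z^2$ and all its translates lie in $X$, and across such a flat a monotone geodesic between opposite corners admits a perpendicular geodesic with the same endpoints at distance comparable to the side length; consequently, routing a far-out sub-segment of any Morse ray through a translate of this flat for $\sim n$ steps, then resuming a Morse direction, yields a Morse ray whose gauge grows with $n$. This shows the strata $\partial_M^N A_\Gamma$ do not stabilize, and — performed with $n$ large but controlled — that near every Morse ray of gauge $\leq N$ there is one of strictly worse gauge yet still of gauge $\leq N'$ for a suitable $N'\gg N$. I would therefore choose a cofinal sequence of Morse gauges $N_1\leq N_2\leq\cdots$, spaced so that each $\partial_M^{N_i}A_\Gamma$ is a nonempty Cantor space and each inclusion $\partial_M^{N_i}A_\Gamma\hookrightarrow\partial_M^{N_{i+1}}A_\Gamma$ is a closed, nowhere dense embedding. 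By cofinality $\partial_M A_\Gamma=\varinjlim_i\partial_M^{N_i}A_\Gamma$ as a topological space, so it is a nested union of Cantor spaces, each nowhere dense in the next, with the direct limit topology; by the uniqueness theorem for such limits proved earlier in this paper, $\partial_M A_\Gamma$ is an $\omega$-Cantor space.

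The step I expect to fight hardest with is making these two ``shape'' properties quantitative: that the Morse directions of a RAAG form a totally disconnected set (Morse rays may fellow-travel flats yet must pull apart tree-like), and that the Morse gauge of the ``flat-detour'' rays can be pinned down tightly enough — bounded above by the next gauge in the cofinal sequence, provably not bounded by the previous one, and not improvable by another route — so that the strata nest exactly as the abstract $\omega$-Cantor framework requires. The remainder is bookkeeping together with invoking the uniqueness result.
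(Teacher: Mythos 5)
Your reduction to the case where $\Gamma$ is neither discrete nor a join matches the paper, and your overall template --- exhibit $\partial_M A_\Gamma$ as a nested union of Cantor spaces, each with empty interior in the next, then invoke the uniqueness theorem for $\omega$-Cantor spaces --- is the right one. But the central step has a genuine gap. You claim that for a suitable cofinal family of gauges each stratum $\partial_M^N A_\Gamma$ is itself a Cantor space, with perfectness obtained by branching a given $N$-Morse ray off along far-out translates of a fixed Morse axis, the ``bounded loss of gauge'' being ``absorbed'' by slack in the cofinal family. This does not work: the branched rays have gauge $N'$ depending on $N$ and on the axis via the thin-triangle lemma (Lemma \ref{lem:triangles}), with $N'\geq N$ and no reason for $N'\leq N$; so they witness non-isolation of the point only in $\partial_M^{N'}A_\Gamma$, not in $\partial_M^{N}A_\Gamma$. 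No choice of cofinal family repairs this, since perfectness of the stratum $\partial_M^{N}A_\Gamma$ would require approximants lying in that \emph{same} stratum for \emph{every} point of it. A stratum is compact, totally disconnected and metrizable but may have isolated points --- it can a priori be any closed subspace of a Cantor space --- and this is exactly the obstruction the paper flags in its introduction. The paper's resolution (Lemma \ref{lem:Cantor_subspace}) is to abandon the strata as building blocks: it forms $A=\partial_M^N G\,\cup\bigcup_{g\in G^{(N)}_e}gC_0$ for a fixed stable Cantor subspace $C_0$ (supplied for RAAGs by Koberda--Mangahas--Taylor), takes $C=\overline{A}$, and shows $C$ is a Cantor space sandwiched between two strata; the limit is then re-expressed over these auxiliary spaces, with entwining supplied by Lemma \ref{lem:entwined}. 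Your proposal has no substitute for this perturbation.

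Two smaller points. First, you assert a countable cofinal sequence of gauges with $\partial_M A_\Gamma=\varinjlim_i\partial_M^{N_i}A_\Gamma$ without justification; the poset of Morse gauges has no countable cofinal subfamily in general, and the paper derives the existence of such a sequence from $\sigma$-compactness of the boundary (Lemma \ref{lem:increasing_gauge}), which for RAAGs comes from the $\mathrm{CAT}(0)$ structure and the identification with the contracting boundary. Second, your total-disconnectedness argument via quasi-trees is stronger than needed and unsubstantiated; the paper cites Theorem 5.1 of \cite{cordes-hume} for the strata and upgrades to the whole boundary via Proposition \ref{prop:totally disconnected}. With the Cantor-subspace perturbation, total disconnectedness, and $\sigma$-compactness in hand, the RAAG case is an immediate application of the general Theorem \ref{thm:boundary_is_Cantor}, which is how the paper proceeds.
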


In addition, we give precise conditions on the defining graph $\Gamma$ for when each of these cases occur.  Here, an $\omega$-Cantor space is defined as a direct limit of a sequence of Cantor spaces $X_1 \subset X_2 \subset X_3  \dots$ such that $X_i$ has empty interior in $X_{i+1}$ for all $i$. We call these limit spaces $\omega$-spaces because all $\sigma$-compact Morse boundaries are examples of $k_\omega$-spaces (for more information see \cite{k-omega}).  Crucially, in Theorem \ref{thm:limit_Cantor}, we show that any two $\omega$-Cantor spaces are homeomorphic, so that (4) describes a well-defined homeomorphism type.  

We also show that nice graph of group decompositions  have $\omega$-Cantor space boundaries:

\begin{theorem} \label{thm:Bass--Serre_is_Cantor}
Let $G$ be a group with a graph of groups decomposition $\mathcal{G}$ such that the vertex groups are undistorted in $G$ and have empty Morse boundary, and $G$ acts acylindrically on the Bass--Serre tree associated to the decomposition. Then $\partial_M G$ is an $\omega$-Cantor space.
\end{theorem}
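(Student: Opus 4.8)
The plan is to use the very definition of an $\omega$-Cantor space, together with Theorem~\ref{thm:limit_Cantor}, which guarantees that this notion determines a single homeomorphism type. Thus it suffices to present $\partial_M G$ as a direct limit of a nested sequence of Cantor spaces $X_1\subset X_2\subset\cdots$ in which each $X_n$ has empty interior in $X_{n+1}$. The obvious candidate is the defining stratification $\partial_M G=\varinjlim_N \partial_M^N G$, so the goal becomes: choose a cofinal sequence of Morse gauges $N_1\le N_2\le\cdots$ such that each stratum $X_n:=\partial_M^{N_n}G$ is a Cantor space and $X_n$ is nowhere dense in $X_{n+1}$. (I note that the statement tacitly assumes the action on the Bass--Serre tree $T$ is non-elementary and that some vertex group is infinite: otherwise $G$ is virtually cyclic, or equals a vertex group, and $\partial_M G$ is finite or empty, not an $\omega$-Cantor space.)

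The structural input I would establish first is that an $N$-Morse geodesic of $G$ cannot dwell near a single vertex coset: for each Morse gauge $N$ there is a constant $D(N)$ bounding the length of any $N$-Morse geodesic segment of $G$ that is coarsely contained in a single coset of a vertex group $G_v$. The proof should combine (i) undistortedness of $G_v$, so that such a segment is fellow-travelled by a geodesic of the coset in its intrinsic metric; (ii) an Arzela--Ascoli limiting argument, which turns an unbounded family of such segments into an $N'$-Morse bi-infinite geodesic of $G_v$ for a gauge $N'$ depending only on $N$ and the distortion data, contradicting $\partial_M G_v=\emptyset$; and (iii) acylindricity, to control the geometry of a bounded region of the (possibly non-locally-finite) tree $T$. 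From $D(N)$ one deduces that every $N$-Morse ray of $G$, once its vertex-coset excursions (all of length $\le D(N)$) are collapsed, projects to an unparametrised quasigeodesic ray of $T$, hence converges to a well-defined end $\iota_N(\gamma)\in\partial T$; the resulting map $\iota_N\colon\partial_M^N G\to\partial T$ is continuous and $G$-equivariant, and one must further show that an $N$-Morse ray is determined by $\iota_N(\gamma)$ together with uniformly bounded combinatorial data recording how it crosses the edges of $T$.

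Granting this, the Cantor-space property follows quickly. Each $\partial_M^N G$ is compact and metrizable \cite{CordesMorseBoundaries,cordes-hume}; it is totally disconnected because it maps, with ``totally disconnected fibres'', into the totally disconnected space $\partial T$ (the space of ends of a simplicial tree), the bounded edge-crossing data contributing only finitely many binary choices at each edge; and it is nonempty and perfect once $N$ is large enough to detect a loxodromic $g$ of the non-elementary action (such $g$ is a Morse element of $G$), since the $g^{\pm}$-translates together with north--south dynamics for two independent loxodromics show that every point of the stratum is approached by distinct points of the same stratum. By Brouwer's characterisation, $\partial_M^N G$ is then a Cantor space for all sufficiently large $N$, and we discard the smaller gauges from the cofinal sequence. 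For the empty-interior property, a basic neighbourhood of a point of $X_{n+1}$ is, through $\iota_{N_{n+1}}$, given by forcing a ray into a half-tree $T^+_e$ obtained by deleting an edge $e$, and for a suitably chosen cofinal sequence $T^+_e$ still carries a non-elementary action; a grafting construction --- follow the given ray deep into $T^+_e$, splice in a long detour inside an infinite wide vertex coset there (which necessarily pushes the Morse gauge past $N_n$), then spiral off along a loxodromic supported in $T^+_e$ --- yields points of $X_{n+1}\setminus X_n$ converging to the given point, so $X_n$ has empty interior in $X_{n+1}$. Since some vertex group is infinite, $G$ is not hyperbolic, the strata do not stabilise, and $\bigcup_n X_n=\partial_M G$, completing the presentation.

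I expect the main obstacle to be the fine interaction between the edge groups of the splitting and the tree, which enters twice. First, in the structural step: when the edge groups are infinite and not themselves wide, an $N$-Morse ray can spend a long time inside an edge-coset region, so the passage from ``$N$-Morse ray'' to ``end of $T$ plus bounded combinatorial data'' --- in particular the assertion that two $N$-Morse rays with the same $T$-end are controlled by finitely much data, so that $\partial_M^N G$ really does inherit total disconnectedness from $\partial T$ --- requires a careful use of acylindricity rather than the vertex-coset argument above. Second, in the empty-interior step: one must certify that the grafted detour genuinely escapes the $N_n$-stratum while remaining close in the $N_{n+1}$-topology, i.e.\ that enlarging the Morse gauge along a vertex-coset detour is both possible and detectable at the level of the stratified boundary. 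Everything else should be routine given the machinery of \cite{cordes-hume} and the homeomorphism rigidity supplied by Theorem~\ref{thm:limit_Cantor}.
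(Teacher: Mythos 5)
Your overall strategy---exhibit $\partial_M G$ as a direct limit of entwined Cantor spaces and invoke Theorem~\ref{thm:limit_Cantor}---is the right frame, and your total-disconnectedness argument via projection to the Bass--Serre tree matches the paper's. But there is a genuine gap at the central step: you claim that for $N$ large the stratum $\partial_M^N G$ itself is perfect, hence a Cantor space, because translates under loxodromics accumulate on every point of the stratum. The accumulation is true (this is essentially Lemma~\ref{lem:lim}), but the approximating points $g_i[\alpha]$ are only $N'$-Morse for a gauge $N'$ depending on the gauges of both $\gamma$ and $\alpha$, and in general $N'>N$; so they live in a \emph{larger} stratum, and nothing prevents $[\gamma]$ from being isolated inside $\partial_M^N G$. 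A compact, totally disconnected, metrizable space with isolated points is not a Cantor space, and there are uncountably many homeomorphism types of such spaces, so you cannot simply ``discard the smaller gauges.'' The paper's resolution is to perturb the stratification: Lemma~\ref{lem:Cantor_subspace} augments $\partial_M^N G$ by the $G$-translates of a fixed stable Cantor subspace (supplied here by a hyperbolically embedded free subgroup coming from acylindricity, via Dahmani--Guirardel--Osin and quasi-convexity of hyperbolically embedded subgroups) and takes the closure, producing a genuine Cantor space sandwiched between two strata; the entwining is then Lemma~\ref{lem:entwined}, and everything is packaged as Theorem~\ref{thm:boundary_is_Cantor}. Your ``grafting'' argument for empty interior is in the spirit of Lemma~\ref{lem:entwined}, but it must be run for these perturbed strata, not the raw ones.

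A second, smaller gap: you assume a countable cofinal sequence of Morse gauges $N_1\le N_2\le\cdots$ with $\bigcup_n\partial_M^{N_n}G=\partial_M G$. The poset of Morse gauges is not countably cofinal in general; such a sequence exists exactly when $\partial_M G$ is $\sigma$-compact (Lemma~\ref{lem:increasing_gauge}), and this is something to prove, not assume. Your bound $D(N)$ gives the inclusion $\partial_M^N G\subseteq\partial_D$ (rays meeting each vertex coset in diameter $\le D$), but $\sigma$-compactness requires the \emph{converse}: every ray with $D$-bounded excursions is $N'(D)$-Morse. The paper proves this converse using acylindricity of the action on $T$ together with a divergence estimate (Lemma 10.4 of \cite{MathieuSisto}); this is where acylindricity enters a second time, beyond producing the stable free subgroup, and it is absent from your outline.
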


An immediate corollary of this theorem is that non-geometric graph manifolds have $\omega$-Cantor space boundaries:

\begin{cor}\label{thm:graph_mflds}
 Let $M$ be a non-geometric graph manifold and let $G=\pi_1(M)$. Then $\partial_M G$ is an $\omega$-Cantor space.
\end{cor}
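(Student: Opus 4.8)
The plan is to exhibit on $G=\pi_1(M)$ a graph of groups decomposition satisfying the three hypotheses of Theorem~\ref{thm:Bass--Serre_is_Cantor}, and then invoke that theorem. It is convenient to first pass to a finite cover $\widehat M\to M$ whose JSJ pieces are all products $S^1\times\Sigma_i$ with $\Sigma_i$ a compact orientable surface with $\chi(\Sigma_i)<0$: such a cover exists by residual finiteness of graph manifold groups together with the fact that the exceptional Seifert piece (the twisted $I$-bundle over the Klein bottle, with Euclidean base orbifold) lifts to copies of $T^2\times I$, which are not genuine JSJ pieces and so get absorbed. Since $M$ is non-geometric so is $\widehat M$, and since the Morse boundary is a quasi-isometry invariant and $\pi_1(\widehat M)$ has finite index in $\pi_1(M)$, it suffices to prove the statement for $\widehat M$; so I rename $\widehat M$ to $M$. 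Its JSJ decomposition now gives $G$ a graph of groups decomposition $\mathcal G$ with finite underlying graph, vertex groups $G_v=\pi_1(S^1\times\Sigma_v)\cong\Z\times\pi_1(\Sigma_v)$ and edge groups $G_e\cong\Z^2$; write $T$ for the Bass--Serre tree.

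Two of the three hypotheses are now immediate. Each $G_v\cong\Z\times\pi_1(\Sigma_v)$ is a direct product of two unbounded groups, hence has no Morse geodesic ray, so $\partial_M G_v=\emptyset$. And the vertex groups are undistorted in $G$: this is a standard feature of the coarse geometry of graph manifolds, namely that the universal covers of the Seifert pieces are quasi-isometrically embedded in $\widetilde M$.

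The remaining, and substantive, point is that the $G$-action on $T$ is acylindrical. The mechanism is that minimality of the JSJ forces the Seifert fibers of adjacent pieces to be transverse: along each edge $e$, the fiber subgroup $Z_u$ of one endpoint and the fiber subgroup $Z_v$ of the other are two non-commensurable lines in $G_e\cong\Z^2$. I would show that any $g\in G$ fixing a reduced edge-path $e_1,e_2,e_3$ of length $3$ in $T$, meeting at vertices $\tilde v_1,\tilde v_2$, must be trivial. At $\tilde v_i$ the element $g$ lies in the intersection of the stabilizers of two distinct edges incident to $\tilde v_i$; each such stabilizer is a peripheral $\Z^2$ of $G_{v_i}\cong\Z\times\pi_1(\Sigma_{v_i})$, of the form $\langle c\rangle\times\Z$ for $c$ a boundary loop of $\Sigma_{v_i}$; since distinct boundary loops of $\Sigma_{v_i}$ generate cyclic subgroups with trivial intersection and boundary loops are malnormal in $\pi_1(\Sigma_{v_i})$, the intersection of any two distinct such peripheral subgroups, or of two distinct conjugates of one of them, is exactly the central fiber $Z_{v_i}\cong\Z$. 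Hence $g\in Z_{v_1}$ and $g\in Z_{v_2}$; but these are the two non-commensurable fiber lines in $G_{e_2}\cong\Z^2$, so $Z_{v_1}\cap Z_{v_2}=\{1\}$ and $g=1$. Thus the action is $3$-acylindrical, and Theorem~\ref{thm:Bass--Serre_is_Cantor} applies to give that $\partial_M G$ is an $\omega$-Cantor space.

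I expect the acylindricity argument to be the main obstacle: one must ensure that all vertices occurring in such a path have hyperbolic base, so that the relevant peripheral cyclic subgroups meet in exactly the fiber and the fiber lines are genuinely transverse along each edge -- which is precisely why it is cleanest to arrange via the initial finite cover that every JSJ piece is $S^1\times\Sigma_i$ with $\chi(\Sigma_i)<0$ (in particular all edge groups then have infinite index in their vertex groups, so $T$ has no valence-two vertices, and one avoids the subtleties of the exceptional small Seifert pieces, whose index-two peripheral subgroups would otherwise need a separate treatment using that ``non-geometric'' forbids matching fibers for \emph{either} Seifert structure). The remaining ingredients -- the shape of the JSJ graph of groups, emptiness of the vertex Morse boundaries, and undistortedness of the vertex groups -- are routine or standard.
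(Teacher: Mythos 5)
Your proof is correct and follows the same overall strategy as the paper: exhibit the (JSJ) graph of groups decomposition and verify the three hypotheses of Theorem~\ref{thm:Bass--Serre_is_Cantor}. The difference is entirely in how acylindricity is handled. The paper applies the theorem directly to the geometric decomposition of $M$ along embedded tori \emph{and Klein bottles} and simply cites \cite[Theorem 7.27]{Abbott-Balasubramanya-Osin} for acylindricity of the action on the Bass--Serre tree; the other two hypotheses (empty Morse boundary and undistortedness of the Seifert vertex groups) are asserted exactly as you assert them. You instead first pass to a finite cover whose pieces are all trivial circle bundles over hyperbolic surfaces (legitimate, by quasi-isometry invariance of the Morse boundary and the standard existence of such covers), and then give a self-contained proof of $3$-acylindricity via transversality of adjacent fibers in the edge groups $\Z^2$ together with malnormality of peripheral subgroups of surface groups; this argument is sound, and the reduction to the cover is precisely what lets you avoid the exceptional small Seifert pieces and Klein bottle walls that the citation to \cite{Abbott-Balasubramanya-Osin} handles wholesale. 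Your route is longer but self-contained; the paper's is a three-line deduction from the literature. One cosmetic point common to both: since the version of Theorem~\ref{thm:Bass--Serre_is_Cantor} proved in Section 4 concludes ``Cantor or $\omega$-Cantor, and Cantor iff $G$ is hyperbolic,'' one should remark that $G$ contains $\Z^2$ and hence is not hyperbolic to land in the $\omega$-Cantor case.
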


These results in fact follow from a more general theorem:

\begin{theorem}\label{thm:boundary_is_Cantor}
 Let $G$ be a finitely generated group. Suppose that $\partial_M G$ is totally disconnected, $\sigma$-compact, and contains a Cantor subspace.  Then $\partial_M G$ is either a Cantor space or an $\omega$-Cantor space.   It is a Cantor space if and only if  $G$ is hyperbolic, in which case $G$ is virtually free.
\end{theorem}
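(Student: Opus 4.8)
The plan is to dichotomize on whether $\partial_M G$ is compact, dispose of the compact case with classical hyperbolic-group theory, and match the non-compact case to the description of $\omega$-Cantor spaces furnished by Theorem~\ref{thm:limit_Cantor}. First I would record the structure: by \cite{cordes-hume} each stratum $\partial_M^N G$ is compact and metrizable and $\partial_M G$ is their direct limit; being $\sigma$-compact, $\partial_M G$ is a $k_\omega$-space, so I may fix a cofinal increasing sequence of Morse gauges $N_1\le N_2\le\cdots$ with $Z_i:=\partial_M^{N_i}G$ satisfying $Z_i\subseteq Z_{i+1}$, $\bigcup_i Z_i=\partial_M G$, and with every compact subset of $\partial_M G$ contained in some $Z_i$; after reindexing, the hypothesised Cantor subspace lies in $Z_1$. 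As a subspace of the totally disconnected space $\partial_M G$, each $Z_i$ is compact, metrizable, and totally disconnected. Now either $(Z_i)$ is eventually constant, equivalently $\partial_M G$ is compact, or it is not.

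\emph{The compact case.} If $\partial_M G$ is compact then $G$ is hyperbolic and $\partial_M G=\partial G$ — this is the known fact that a finitely generated group with non-empty (here even infinite) compact Morse boundary is hyperbolic. Since $\partial G$ contains a Cantor subspace it is infinite, so $G$ is non-elementary and $\partial G$ is perfect; being compact, metrizable, totally disconnected, perfect and non-empty, $\partial G$ is a Cantor space by Brouwer's characterisation. Moreover a hyperbolic group whose boundary is totally disconnected (equivalently zero-dimensional) is virtually free, by Stallings' ends theorem together with Dunwoody accessibility. Conversely, if $G$ is hyperbolic then $\partial_M G=\partial G$ is compact, which places us in this case; since an $\omega$-Cantor space is never compact (a strictly increasing tower of compacta carries an infinite closed discrete subset), ``$\partial_M G$ is a Cantor space'' is exactly ``$G$ is hyperbolic'', and then $G$ is virtually free.

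\emph{The non-compact case.} Suppose $(Z_i)$ does not stabilize, so $\partial_M G$ is non-compact and, by the compact case, $G$ is not hyperbolic; I must show $\partial_M G$ is an $\omega$-Cantor space. By Theorem~\ref{thm:limit_Cantor} it suffices to realize $\partial_M G$ as the direct limit of a tower of Cantor subspaces $C_1\subset C_2\subset\cdots$, cofinal with $(Z_i)$, with each $C_n$ closed and nowhere dense in $C_{n+1}$. I would isolate as a lemma the geometric heart: \emph{since $G$ is not hyperbolic, no non-empty open subset of $\partial_M G$ is contained in a single stratum}; equivalently, every $Z_i$ is nowhere dense in $\partial_M G$, so in particular $\partial_M G$ has no isolated points and every neighbourhood of every boundary point meets arbitrarily high strata. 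Granting the lemma, the tower is built inductively: given $C_{n-1}\subseteq Z_j$, set $m=\max(j,n)$ and enlarge $Z_m$ to a Cantor subspace $C_n\supseteq Z_m$ of $\partial_M G$ by adjoining, near each of the countably many points isolated within $Z_m$ and near each point of $C_{n-1}$, a convergent sequence of new boundary points of strictly higher gauge (supplied by the lemma), chosen inside shrinking neighbourhoods so that the result is a perfect compact subset of a single stratum and so that $C_{n-1}$ becomes nowhere dense in it. Then $Z_n\subseteq C_n$, so $(C_n)$ is cofinal with $(Z_i)$ and $\varinjlim C_n=\partial_M G$; by Theorem~\ref{thm:limit_Cantor} this identifies $\partial_M G$, up to homeomorphism, as the $\omega$-Cantor space.

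The main obstacle is the lemma — that non-hyperbolicity of $G$ forces the Morse boundary to grow in every neighbourhood of every point. This is where both remaining hypotheses enter: the Cantor subspace, together with the action of $G$ on $\partial_M G$, supplies Cantor subspaces of unboundedly large Morse gauge (for instance Morse boundaries of conjugates of a stable free subgroup), and cocompactness of the $G$-action lets one translate these so that they accumulate at any prescribed boundary point, producing the required nearby points of higher strata. The technical content is to control the growth of the Morse gauge under such translations and to extract accumulation from the compactness of the strata. Everything else — the $k_\omega$ reduction, the re-assembly of the zero-dimensional compacta into the Cantor tower, and the compact case — is either standard or a citation.
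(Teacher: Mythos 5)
Your overall architecture is the same as the paper's: handle the compact case via hyperbolicity, Brouwer's characterisation, and virtual freeness, and in the non-compact case build a cofinal tower of Cantor subspaces interleaved with the strata, each nowhere dense in the next, then invoke Theorem~\ref{thm:limit_Cantor}. The compact case, the $k_\omega$ reduction (the paper's Lemma~\ref{lem:increasing_gauge}), and the final assembly are all fine. The gaps are in the two statements carrying the geometric content, which you rightly flag as ``the main obstacle'' but whose proposed mechanisms would not work as written. For making a stratum perfect, you propose adjoining ``a convergent sequence of new boundary points'' near each isolated point; but a convergent sequence of points is itself full of isolated points, so the result is again not perfect. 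One must adjoin perfect sets --- the paper's Lemma~\ref{lem:Cantor_subspace} adjoins the translates $gC_0$ of the given Cantor subspace over all $g$ on $N$-Morse geodesics --- and then prove separately (Lemma~\ref{lem:lim}, via the ideal-triangle Lemma~\ref{lem:triangles} and a Gromov-product computation) that these translates accumulate at \emph{every} point of the stratum. You gesture at translated Cantor subspaces in your closing paragraph, but the construction itself uses sequences of points.

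More seriously, your key lemma --- that non-hyperbolicity forces every stratum $Z_i$ to be nowhere dense, i.e.\ that every neighbourhood of every point of $Z_i$ contains points of strictly higher gauge --- is not delivered by the mechanism you sketch. Translating the given Cantor subspace (or conjugating a stable free subgroup) produces points accumulating at a prescribed boundary point, but all such translates could a priori lie in a single bounded stratum, so nothing forces them to leave $Z_i$; indeed for hyperbolic $G$ they never do. The paper's Lemma~\ref{lem:entwined} instead uses non-compactness to choose one point $z$ whose gauge exceeds the threshold $N_1$ coming from the triangle lemma, translates $z$ by the elements $g_i=\gamma(i)$ along a geodesic representing $p\in\partial_M^N G$, cites Liu's dynamics results to get $g_iz\to p$ inside a controlled stratum $\partial_M^{N'}G$, and applies the triangle lemma again to show each $g_iz$ fails to be $N$-Morse. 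That use of non-compactness to manufacture \emph{escape} from the stratum is the missing idea: what is needed is a lower bound on the gauge of the translated points, whereas ``controlling the growth of the Morse gauge under translation'' only supplies upper bounds. (Also, the $G$-action on $\partial_M G$ is not cocompact; the translation argument lives in the Cayley graph.) Without these two inputs the inductive construction of the tower does not get off the ground.
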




We note that all hypotheses on $\partial_M G$ are necessary, but for the last two there is no known counterexample, for more discussion see below.

At first sight, one might think that Theorem \ref{thm:boundary_is_Cantor} should be straightforward to prove using the stratification of $\partial_M G$ given by Morse gauges. By hypothesis, each stratum is a totally disconnected compact space, which should make it a Cantor space, and hence would make $\partial_M G$ a direct limit of Cantor spaces, as required. However, the fact that strata are compact and totally disconnected does not mean that they are Cantor spaces; they could be any closed subspace of a Cantor space, of which there are uncountably many homeomorphism types. The solution is to perturb the stratification by Morse gauges to ensure that each stratum is a Cantor space.  To do this we need to, roughly speaking, ``add'' a sequence of Cantor spaces converging to each isolated point of the stratum. This sequence of Cantor spaces is obtained as translates of the Cantor space given in the hypothesis of the theorem.  

The hypothesis that $\partial_M G$ is $\sigma$-compact is necessary since the $\omega$-Cantor space is $\sigma$-compact by construction. On the other hand, to the best of our knowledge there is no known example of a group $G$ so that $\partial_M G$ is not $\sigma$-compact. However, we believe that such examples can be found among small-cancellation groups, and it is even possible that \emph{every} infinitely presented $C'(1/6)$-small-cancellation group has non-$\sigma$-compact Morse boundary.

The hypothesis that $\partial_M G$ contains a Cantor subspace is also a necessary condition. However, there is no known example of a group with non-compact Morse boundary that does not contain a Cantor subspace. In fact, in most of the motivating examples, one has a stable free subgroup (e.g., in acylindrically hyperbolic groups). On the other hand, there are examples by E.~Fink of torsion groups with Morse rays \cite{Fink}, but even those probably contain a Cantor subspace. In fact, Fink finds a tree in the Cayley graph where all rays are Morse. We believe that this tree is most likely stable, thereby exhibiting a Cantor subspace in the Morse boundary.   This naturally raises a question: Does there exist a group whose Morse boundary is non-compact and does not contain a Cantor subspace?

\subsection*{Direct limits of \sier curves and cusped hyperbolic 3-manifolds}
In the case of fundamental groups of cusped hyperbolic 3-manifolds, we show that their Morse boundaries are direct limits of a sequence $S_1 \subset S_2 \subset S_3 \dots $ of \sier curves. Moreover, these curves are nicely embedded, namely the peripheral curves of $S_i$ are disjoint from those of $S_{i+1}$.  We call such a limit an $\omega$-\sier curve and prove that any two such are homeomorphic.  

\begin{theorem}\label{thm:Sierpinski_boundary}
 Let $M$ be a finite-volume hyperbolic $3$-manifold with at least one cusp, and let $G=\pi_1(M)$. Then $\partial_M G$ is an $\omega$-\sier curve.
\end{theorem}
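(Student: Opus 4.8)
The plan is to give a direct model for $\partial_M G$ using the geometrically finite action of $G=\pi_1(M)$ on $\mathbb{H}^3$. Recall that this action is properly discontinuous and by isometries, has limit set all of $\Sph=\partial\mathbb{H}^3$ (as $M$ has finite volume), and exhibits $G$ as hyperbolic relative to the cusp subgroups $P_1,\dots,P_n$, each virtually $\Z^2$, with Bowditch boundary $\Sph$. Let $\{\xi_j\}_{j\in\N}$ be the parabolic fixed points of the $G$-conjugates of the $P_i$; since $G$ is non-elementary this is a $G$-invariant, hence dense, countable subset of $\Sph$, and it is non-empty because $M$ has a cusp. I would fix a $G$-equivariant family of open horoballs $\{B_j\}$ centred at the $\xi_j$ with disjoint interiors; for $D\ge 0$ let $B_j^D\subseteq B_j$ be the horoball at $\xi_j$ pushed inward by hyperbolic distance $D$, so $\bigcap_D B_j^D=\emptyset$; fix a basepoint $o$ outside every $B_j$; let $U_j^D\subseteq\Sph$ be the shadow of $B_j^D$ seen from $o$; and set $C_D=\Sph\setminus\bigcup_j U_j^D$. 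Using standard visual-size estimates for geometrically finite groups, I would record that once $D$ exceeds a universal constant the $U_j^D$ are Jordan domains (the rays from $o$ tangent to the convex set $B_j^D$ sweep out an embedded circle) with pairwise disjoint closures (two nearby parabolic points force the horoballs centred at them, hence their shadows, to be correspondingly small in visual size), with $\diam U_j^D\to 0$ as $j\to\infty$ (local finiteness of the horoball family), and with $\bigcup_j U_j^D$ dense since it contains $\{\xi_j\}$.

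The first main step is to identify the Morse strata with the $C_D$. The plan is to show that taking $\mathbb{H}^3$-straightenings of geodesic rays yields, for each Morse gauge $N$, a homeomorphism of $\partial_M^N G$ onto a subspace of $\Sph$ sandwiched between $C_{D(N)}$ and $C_{D'(N)}$ (that is, containing the first and contained in the second), with $D(N)\le D'(N)$ both tending to $\infty$ with $N$, and compatibly with the stratum inclusions $\partial_M^N G\hookrightarrow\partial_M^{N'} G$. This rests on two inputs about Morse (equivalently, stable) geodesics in relatively hyperbolic groups: first, a geodesic ray of $G$ is $N$-Morse if and only if its $\mathbb{H}^3$-straightening penetrates every horoball to depth bounded in terms of $N$ only, i.e.\ stays out of $\bigcup_j\operatorname{int}B_j^{D(N)}$, with $D(N)\to\infty$; and second, on each stratum the Morse-boundary topology is that of uniform convergence on compact sets of the straightened rays based at $o$, which for geodesic rays of $\mathbb{H}^3$ is exactly convergence of their endpoints in $\Sph$. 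Granting these, the direct systems $\{\partial_M^N G\}_N$ and $\{C_D\}_{D\in\N}$ are cofinally interleaved, so $\partial_M G$ is homeomorphic to $\varinjlim_{D\in\N} C_D$.

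The second step is then essentially formal. By the first paragraph, for $D$ large $C_D$ is a $2$-sphere with a null sequence of disjoint Jordan domains with pairwise disjoint closures and dense union deleted, so by Whyburn's topological characterisation of the \sier carpet each $C_D$ is a \sier curve, with peripheral circles exactly the $\partial U_j^D$. I would then pass to a cofinal chain $C_{D_1}\subset C_{D_2}\subset\cdots$ with $D_1$ large and check that its inclusions are of the type required by the classification theorem for $\omega$-\sier curves proved in the paper: $C_{D_k}$ has empty interior in $C_{D_{k+1}}$, because every point of the \sier curve $C_{D_k}$ is a limit of annuli $U_j^{D_k}\setminus\overline{U_j^{D_{k+1}}}\subseteq C_{D_{k+1}}\setminus C_{D_k}$; and the peripheral circles of $C_{D_k}$ are disjoint from those of $C_{D_{k+1}}$, since for $j\ne j'$ one has $\overline{U_j^{D_k}}\cap\overline{U_{j'}^{D_{k+1}}}\subseteq\overline{U_j^{D_k}}\cap\overline{U_{j'}^{D_k}}=\emptyset$, while for $j=j'$ pushing the horoball strictly inward strictly shrinks its shadow, so $\overline{U_j^{D_{k+1}}}\subset U_j^{D_k}$. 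Since $M$ is non-compact, $G$ is not hyperbolic, so this chain is cofinally strictly increasing; hence $\partial_M G\cong\varinjlim_k C_{D_k}$ is an $\omega$-\sier curve, and by the classification theorem this determines its homeomorphism type. (The hypothesis that $M$ has a cusp enters precisely here: it guarantees infinitely many $\xi_j$, so that the $C_D$ are genuine \sier curves rather than $2$-spheres.)

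The hard part will be the first step: making precise and correct the dictionary between $N$-Morse rays of $G$ and $\mathbb{H}^3$-geodesics avoiding a prescribed depth of the cusps, and verifying that the abstract Morse-boundary topology on a single stratum agrees with the subspace topology induced from $\Sph$ on $C_D$. Once that is in place, the remainder is planar topology via Whyburn together with elementary hyperbolic geometry of horoball shadows.
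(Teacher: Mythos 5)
Your first step---the dictionary between $N$-Morse rays of the neutered space and $\mathbb H^3$-geodesics of bounded horoball penetration, giving strata sandwiched between the sets $C_D$---is essentially the paper's Proposition \ref{prop:Morse=cheese} and is fine. The fatal problem is in the second step, in the parenthetical claim that for $D$ large the shadows $U_j^D$ have \emph{pairwise disjoint closures}. This is false, and no choice of $D$ fixes it: each shadow $U_j^D$ is a nonempty open subset of $\Sph$, and the parabolic points are dense, so $U_j^D$ contains infinitely many other parabolic points $\xi_{j'}$; since $\xi_{j'}\in U_{j'}^D$ (the ray $[o,\xi_{j'})$ eventually enters every horoball centred at $\xi_{j'}$), the shadow $U_j^D$ meets infinitely many other shadows. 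The size estimate you invoke (nearby parabolic points have horoballs of very different depth, cf.\ Lemma \ref{lem:separated_parabolics}) controls relative \emph{sizes} but does not prevent a small shadow from sitting inside, or straddling the boundary of, a large one. Consequently $C_D=\Sph\setminus\bigcup_j U_j^D$ is the complement of a union of heavily overlapping Jordan domains; Whyburn's characterisation does not apply, $C_D$ need not be a \sier curve, and the disjointness computation you use for entwinedness ($\overline{U_j^{D_k}}\cap\overline{U_{j'}^{D_{k+1}}}=\emptyset$ for $j\neq j'$) collapses. This is exactly the difficulty the paper flags: ``the shadows are not disjoint, so the situation is more complicated, and once again we have to perturb the stratification.''

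What is missing is the content of Proposition \ref{prop:circles} and Corollary \ref{cor:Sierpinski_between_cheese}: for each parabolic point $p$ one must construct, by an iterative detouring procedure using Mackay's quasi-arc machinery (linear connectedness and doubling of $\Sph$, Proposition \ref{prop-coarsestr} and Lemma \ref{lem-approx}), a homotopically non-trivial circle $\gamma_p$ in the annulus $B(p,3\lambda r_p/4)\setminus B(p,\lambda r_p/4)$ that avoids the balls $B(p',3\lambda r_{p'}/4)$ of all parabolic points with $r_{p'}\le r_p$. The disks bounded by these detoured circles are then genuinely disjoint or nested, and deleting them produces a \sier curve $S$ with $V_\lambda\subseteq S\subseteq V_{\lambda/4}$; interleaving these with the strata (requiring $\lambda(N_{j(k+1)})<\lambda(N_{j(k)})/4$) gives the entwined chain. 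This replacement of the naive shadow boundaries by detoured peripheral circles is the technical heart of the theorem, and your proposal omits it entirely.
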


The difficulty here is similar to the problems that arise in the proof of Theorem \ref{thm:boundary_is_Cantor}.  Roughly, each stratum is obtained from $\Sph=\partial \mathbb H^3$ by removing ``shadows'' of horoballs, so one might at first expect that strata are \sier curves. However, the shadows are not disjoint, so the situation is more complicated, and once again we have to perturb the stratification. In this case, more technology is needed to do so, namely technology from \cite{Mac-08-quasi-arc, MS:quasi_planes}, which allows us to carefully detour all the boundary circles of the shadows in order to make them disjoint. This is a rather delicate procedure, and in fact the technology to make something like this work in higher dimensions seems to not be available. It would be interesting to know whether Morse boundaries of fundamental groups of non-compact, finite-volume hyperbolic $n$-manifolds are all homeomorphic to a certain limit of the $(n-1)$-dimensional analogues of \sier curves.

One corollary of the fact the Morse boundary of a finite-volume hyperbolic $3$-manifold with at least one cusp is an  $\omega$-\sier curve, is that the boundary is not totally disconnected. Tran conjectured that the Morse boundary of a right-angled Coxeter group is totally disconnected if and only if every induced cycle of length greater than four in the defining graph contains a pair of non-adjacent vertices that are contained in an induced 4-cycle \cite{Tran}.  As Tran pointed out to us, the one-skeleton of a 3-cube satisfies this condition and its associated right-angled Coxeter group  is virtually a finite volume hyperbolic three manifold with cusps \cite{Haulmark-Nguyen-Tran}.  It follows from our theorem that this is a counterexample to Tran's conjecture.  We note that this is not the first counterexample to this conjecture, see \cites{Graeber-Karrer-Lazarovich-Stark, Karrer}.

\subsection*{Outline}  Section 2 contains definitions and background on Morse boundaries.  In Sections 3 and 4 we investigate limits of Cantor sets and apply this to characterize Morse boundaries of right-angled Artin groups.  In Sections 5 and 6, we investigate limits of \sier curves and prove Theorem \ref{thm:Sierpinski_boundary}.

\section{Background}\label{sec:background}

\subsection{Morse boundary}

\emph{We will assume throughout that $X$ is a proper geodesic metric space.}  We begin with a definition of the Morse boundary of $X$ and some properties that will be needed for the arguments below.  We refer the reader to \cite{CordesMorseBoundaries} for more details.

\begin{definition}  A geodesic $\alpha$ in $X$ is \emph{Morse} if there exists a function $N: \R^+ \times \R^+ \to \R^+$ such that any $(\lambda, \epsilon)$-quasi-geodesic with endpoints on $\alpha$,  lies in the $N(\lambda, \epsilon)$-neighborhood of $\alpha$.  The function $N$ is called a \emph{Morse gauge} for $\alpha$.
 \end{definition}
 
 \begin{lemma}[Lemma 3.1 \cite{Liu:dynamics}] \label{lem:morse subgeo}
 For each Morse gauge $N$, there exists an $N'$ depending only on $N$, such that if $\alpha$ is an $N$-Morse geodesic in $X$ and $\beta$ is a subgeodesic of $\alpha$, then $\beta$ is $N'$-Morse.
 \end{lemma}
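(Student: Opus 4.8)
The plan is to reduce to the following statement: there is a function $N'$, depending only on $N$, such that any $(\lambda,\epsilon)$-quasi-geodesic $c\colon[0,\ell]\to X$ whose endpoints lie on $\beta$ is contained in the $N'(\lambda,\epsilon)$-neighborhood of $\beta$. Since $\beta$ is a subgeodesic of $\alpha$, the subsegment of $\alpha$ spanned by the two endpoints of $c$ is already contained in $\beta$, so we may as well assume $\beta=\alpha|_{[s,t]}$ with $c(0)=\alpha(s)$, $c(\ell)=\alpha(t)$ and $s\le t$. Set $N=N(\lambda,\epsilon)$. The only property of $\alpha$ we invoke is that, being $N$-Morse, it forces $c$ into the $N$-neighborhood of $\alpha$; this is legitimate precisely because the endpoints of $c$ lie on $\alpha$.

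The core of the argument is a ``shadow-tracking'' estimate along $c$. For $u\in[0,\ell]$ let $S(u)$ be the set of parameters $\rho$ with $d(c(u),\alpha(\rho))\le N$. Then $S(u)\neq\emptyset$ for all $u$; since $\alpha$ is geodesic, $\mathrm{diam}\,S(u)\le 2N$; and for all $u,u'$, $\rho\in S(u)$, $\rho'\in S(u')$ we get $|\rho-\rho'|\le 2N+\epsilon+\lambda|u-u'|$ from the quasi-geodesic inequality. Fix $u$ and pick $\rho\in S(u)$. If $\rho\in[s,t]$, then $d(c(u),\beta)\le N$. Otherwise $\rho>t$ (``overshoot'') or $\rho<s$ (``undershoot''), and these are symmetric, so take $\rho>t$. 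We first dispose of the degenerate case $t-s\le N$: there $d(\alpha(s),\alpha(t))\le N$, hence $\ell\le\lambda(N+\epsilon)$ and $c$ lies in the $\lambda(N+\epsilon)$-neighborhood of $\alpha(s)\in\beta$. So assume $t-s>N$. Since $c(0)=\alpha(s)$, every element of $S(0)$ is within $N$ of $s$, so $\max S(0)\le s+N<t$, while $\max S(u)\ge\rho>t$. Running a discrete intermediate-value argument on the real sequence $i\mapsto\max S(v_i)$ along a partition $0=v_0<\dots<v_k=u$ with mesh at most $\epsilon/\lambda$ (so consecutive terms differ by at most $2N+2\epsilon$), we obtain $u_0\in[0,u]$ with $S(u_0)\cap[t,t+2N+2\epsilon)\neq\emptyset$, whence $d(c(u_0),\alpha(t))\le 3N+2\epsilon$.

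Finally we transport this estimate back to $u$. Since $\alpha(t)=c(\ell)$, the quasi-geodesic inequality gives $|u_0-\ell|\le\lambda\bigl(d(c(u_0),c(\ell))+\epsilon\bigr)\le 3\lambda(N+\epsilon)$, and because $u\in[u_0,\ell]$ we conclude $\ell-u\le 3\lambda(N+\epsilon)$, hence $d(c(u),\alpha(t))\le\lambda(\ell-u)+\epsilon\le 3\lambda^2(N+\epsilon)+\epsilon$; as $\alpha(t)\in\beta$, this bounds $d(c(u),\beta)$. The undershoot case is identical after replacing $c|_{[0,u]}$ by $c|_{[u,\ell]}$ and the endpoint $\alpha(t)$ by $\alpha(s)$. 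Setting $N'(\lambda,\epsilon)=3\lambda^2\bigl(N(\lambda,\epsilon)+\epsilon\bigr)+\epsilon$, which dominates all the bounds above, shows that every such $c$ stays in the $N'(\lambda,\epsilon)$-neighborhood of $\beta$, i.e.\ $\beta$ is $N'$-Morse with $N'$ depending only on $N$. The case in which $\beta$ is a subray, or all of $\alpha$, is handled by the same scheme with one or zero of the two boundary cases.

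I expect the main obstacle to be making the shadow-tracking step fully rigorous. The set $S(u)$ is neither single-valued nor continuous in $u$ — it can jump by roughly $N+\epsilon$ — so the ``intermediate value'' step is really a discrete crossing argument that must be fed by the uniform diameter bound on $S(u)$ together with the coarse-Lipschitz estimate, and one has to be scrupulous about which half of $c$ and which endpoint of $\beta$ to use in the overshoot versus the undershoot case.
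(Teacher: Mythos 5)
The paper does not actually prove this lemma; it is quoted verbatim from Liu (Lemma 3.1 of \cite{Liu:dynamics}) and used as a black box, so there is no in-paper argument to compare against. Your proof is, as far as I can check, correct and self-contained, and it proceeds by what is essentially the standard route: since the endpoints of the quasi-geodesic $c$ lie on $\beta\subseteq\alpha$, the $N$-Morse property of $\alpha$ already traps $c$ in $N_N(\alpha)$, and the only content is to rule out $c$ drifting along the part of $\alpha$ outside $\beta$; your ``shadow'' sets $S(u)$ together with the discrete crossing argument (the jump bound $2N+2\epsilon$ coming from $\mathrm{diam}\,S(u)\le 2N$ plus the coarse upper Lipschitz bound) do exactly that, and the transport back via the lower quasi-geodesic inequality correctly converts closeness of $c(u_0)$ to $\alpha(t)$ into a bound on $\ell-u$. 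Two small bookkeeping points, neither of which affects validity since your final $N'$ dominates everything when $\lambda\ge 1$: in the degenerate case $t-s\le N$ the correct bound on $d(c(u),\alpha(s))$ is $\lambda\ell+\epsilon\le\lambda^2(N+\epsilon)+\epsilon$ rather than $\lambda(N+\epsilon)$, which is the bound on $\ell$; and one should note explicitly that $S(u)$ is compact (it is a closed subset of the parameter domain of diameter at most $2N$), so that $\max S(v_i)$ is defined. Your remark that no continuity of $c$ is needed is right: only the coarse Lipschitz upper bound enters the crossing step.
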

 
\begin{remark} 
Taking Lemma \ref{lem:morse subgeo} into consideration, we call a geodesic $N$-Morse if the geodesic and all subgeodesics have Morse gauge $N$.
\end{remark}

It is shown in \cite{CordesMorseBoundaries} that if two sides of a triangle in $X$ are $N$-Morse, then the third side is $N_1$-Morse where $N_1$ depends only on $N$.  This also holds for ideal triangles by \cite{quasi-mobius}, Lemma 2.3.  We record this property here as it will be used repeatedly below.

\begin{lemma}[\cite{quasi-mobius}] \label{lem:triangles} Let $X$ be a proper geodesic metric space. Let  $\Delta(x, y, z)$ be a geodesic triangle with vertices $x, y, z \in X \cup \partial_M X$ and suppose that two sides of $\Delta$ are $N$-Morse.  Then the third side is $N'$-Morse where $N'$ depends only on $N.$
\end{lemma}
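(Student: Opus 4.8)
\emph{Plan of proof.} Write $\alpha=[y,z]$ for the side to be bounded and $\gamma=[x,y]$, $\beta=[x,z]$ for the two $N$-Morse sides. The idea is to reduce everything to the case of a triangle with finite vertices, which is already known (by \cite{CordesMorseBoundaries}, a geodesic triangle \emph{in $X$} with two $N$-Morse sides has $N_1(N)$-Morse third side), by replacing each ideal vertex with a suitable finite point and chaining together a bounded number of applications of this finite case, taking care that every constant introduced depends only on $N$. Along the way I will use two standard facts about Morse boundaries: (a) if some geodesic ray to $\xi\in\partial_M X$ is $N$-Morse, then every geodesic ray with endpoint $\xi$ is $N''$-Morse with $N''=N''(N)$; and (b) if $r,r'$ are $N$-Morse geodesic rays with the same ideal endpoint, then there are points of $r$ arbitrarily far out lying at distance $\le D_0$ from $r'$, where $D_0=D_0(N)$. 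Finally, it suffices to find a uniform $N'=N'(N)$ so that every \emph{finite} geodesic subsegment $\sigma=\alpha|_{[p_0,p_1]}$ of $\alpha$ is $N'$-Morse, since any $(\lambda,\epsilon)$-quasigeodesic tested against $\alpha$, or against a subgeodesic of $\alpha$, has both endpoints on such a $\sigma$; together with Lemma \ref{lem:morse subgeo} this gives the statement in the strong sense of the Remark. For the reduction itself: if $x,y,z\in X$ there is nothing to do, and if exactly one of $y,z$, say $y$, is ideal, then $\alpha$ is a geodesic ray with endpoint $y$, which is an $N$-Morse direction (witnessed by $\gamma$), so $\alpha$ is $N''(N)$-Morse by (a). Hence we may assume either \textbf{(I)} $y,z\in X$ and $x$ is ideal, or \textbf{(II)} both $y$ and $z$ are ideal, with $x$ arbitrary.

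\emph{A connecting step.} In both remaining cases $\gamma$ and $\beta$ meet at $x$, and the crux is the following: for points $u\in\gamma$, $v\in\beta$ lying on appropriate sub-rays, the geodesic $[u,v]$ is $N_\sharp$-Morse for some $N_\sharp=N_\sharp(N)$. If $x\in X$ this is immediate from the finite case applied to $\Delta(u,x,v)$, whose sides $[u,x]\subseteq\gamma$ and $[x,v]\subseteq\beta$ are $N_\ast(N)$-Morse by Lemma \ref{lem:morse subgeo}. If $x$ is ideal, the $x$-ward ends of $\gamma$ and $\beta$ are $N_\ast(N)$-Morse rays to $x$, so by fact (b) there are points $\gamma(T)$ and $\beta(s_0)$ with $d(\gamma(T),\beta(s_0))\le D_0(N)$; then $[u,v]$ is controlled by applying the finite case twice, first to $\Delta(u,\gamma(T),\beta(s_0))$ (whose sides $[u,\gamma(T)]\subseteq\gamma$ and the length-$\le D_0$ segment $[\gamma(T),\beta(s_0)]$ are uniformly Morse) and then to $\Delta(u,\beta(s_0),v)$. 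Applying this with $u=y$ and $v=z$, the two endpoints of $\gamma$ and $\beta$, settles Case (I): $\alpha=[y,z]$ is $N_\sharp(N)$-Morse.

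\emph{The bi-infinite case.} In Case (II), fix a finite subsegment $\sigma=\alpha|_{[p_0,p_1]}$ with $p_0$ on the $y$-side of $p_1$. By fact (a) the sub-rays of $\alpha$ from $p_0$ to $y$ and from $p_1$ to $z$ are $N''(N)$-Morse and have the same ideal endpoints as $\gamma$ and $\beta$; so fact (b) produces a point $a^-$ of $\alpha$ on the $y$-side of $p_0$ with $d(a^-,u_0)\le D_0'(N)$ for some $u_0\in\gamma$, and likewise a point $a^+$ on the $z$-side of $p_1$ with $d(a^+,v_0)\le D_0'(N)$ for some $v_0\in\beta$. Now chain the finite case: $\Delta(a^-,u_0,p_0)$ has sides the length-$\le D_0'$ segment $[a^-,u_0]$ and the subsegment $[a^-,p_0]$ of $\alpha$, which is $N^\dagger(N)$-Morse by Lemma \ref{lem:morse subgeo}, so $[u_0,p_0]$ is $M_1(N)$-Morse; symmetrically $[v_0,p_1]$ is $M_1(N)$-Morse; the connecting step gives that $[u_0,v_0]$ is $N_\sharp(N)$-Morse; and splitting the quadrilateral $p_0u_0v_0p_1$ along the diagonal $[p_0,v_0]$, two further applications of the finite case (to $\Delta(p_0,u_0,v_0)$, then to $\Delta(p_0,v_0,p_1)$) show $\sigma$ is $N'$-Morse for some $N'=N'(N)$. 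Since $\sigma$ was arbitrary, $\alpha$ is $N'$-Morse, completing the proof.

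\emph{Main obstacle.} The delicate point is the ideal apex. Two $N$-Morse rays to a common boundary point may be far apart near their basepoints, so naively truncating $\gamma$ and $\beta$ ``near $x$'' would introduce constants depending on $d(y,z)$, which would ruin the claim that $N'$ depends only on $N$. The remedy, exploited throughout, is to use fact (b) only to know that $\gamma$ and $\beta$ (and, in Case (II), the relevant sub-rays of $\alpha$) \emph{eventually} come $D_0(N)$-close, never needing to know \emph{how far out} this happens; this is exactly what keeps the connecting step, and hence the final Morse gauge, a function of $N$ alone. The rest is bookkeeping: checking that every geodesic segment drawn while assembling the finite quadrilateral $p_0u_0v_0p_1$ has already been shown uniformly Morse before the finite triangle result is invoked on it.
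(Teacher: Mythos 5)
The paper does not actually prove this lemma — it is quoted from \cite{quasi-mobius} — so your argument has to stand on its own, and it does not: your fact (a), that if \emph{some} geodesic ray to $\xi\in\partial_M X$ is $N$-Morse then \emph{every} geodesic ray with endpoint $\xi$ is $N''(N)$-Morse, is false as stated. Take $X=\mathbb R^2$ with a ray $\rho$ glued to the origin. Then $\rho$ is $N_0$-Morse for a universal $N_0$ (a quasi-geodesic with endpoints on $\rho$ can only make excursions into the plane that enter and leave through the origin, and such excursions are uniformly short), but for a point $p$ in the plane with $|p|=L$ the geodesic ray $r'=[p,0]\cup\rho$ is asymptotic to $\rho$ and has Morse gauge growing linearly in $L$: the ``tent'' path over the Euclidean segment $[p,0]$ formed by two sides of an equilateral triangle is a $(2,0)$-quasi-geodesic whose apex lies at distance $\tfrac{\sqrt{3}}{2}L$ from $r'$. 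The uniform version of (a) holds only for asymptotic $N$-Morse rays with the \emph{same} basepoint (or with the distance between basepoints entering the gauge).

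This failure occurs exactly where the lemma has content. In your ``exactly one ideal vertex'' case you conclude that $\alpha=[z,y]$ is uniformly Morse solely from its being asymptotic to $\gamma$, never using the second Morse side $\beta=[x,z]$; in the example above that conclusion is simply wrong, and the lemma survives there only because requiring \emph{both} $[x,y]$ and $[x,z]$ to be $N$-Morse forces $L\lesssim N$. Likewise, in Case (II) you invoke fact (a) to declare the sub-rays of $\alpha$ toward $y$ and $z$ to be $N''(N)$-Morse — but uniform Morseness of sub-rays of $\alpha$ is essentially the conclusion being proved, so the key step is carried by the false lemma. (Your fact (b), eventual $D_0(N)$-closeness of asymptotic $N$-Morse rays with different basepoints, is true but is itself not a formal consequence of finite Hausdorff distance; it needs the Morse property of both rays.) A correct argument must use both $N$-Morse sides to show first that the third side lies in a neighborhood, of radius depending only on $N$, of the union of the two given sides together with bounded bridges near the (possibly ideal) common vertex, and then upgrade this controlled fellow-traveling to uniform Morseness; this is how the cited proof proceeds. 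The reduction-to-finite-triangles architecture you set up is reasonable, but as written the proof has a genuine gap.
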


 For two Morse rays $\alpha, \beta$ in $X$, say $\alpha \sim \beta$  if they have bounded Hausdorff distance.  It is shown in \cite{CordesMorseBoundaries} that this bound depends only on the Morse gauge $N$, that is, there exists $C_N$ such that two $N$-Morse rays $\alpha, \beta$ are equivalent if and only if  $d(\alpha(t), \gamma(t))< C_N$ for all $t$. The Morse boundary of $X$ consists of the set of equivalence classes of Morse rays.  To topologize this set, first choose a basepoint $e \in X$ and let $N$ be a Morse gauge.  Set 
\begin{equation*} \partial^N_M X_e= \{[\alpha] \mid \exists \beta \in [\alpha] \text{ that is an $N$--Morse geodesic ray with } \beta(0)=e\} \end{equation*} 
with the compact-open topology. These spaces are compact.  This topology is equivalent to one defined by a system of neighborhoods, $\{V_n(\alpha) \mid n \in \N \}$, defined as follows: $V_n( \alpha)$ is the set of $[\gamma] \in \partial ^N_M X_e$ such that   $d(\alpha(t), \gamma(t))< C_N$ for all $t<n$. 

Let $\mathcal M$ be the set of all Morse gauges. Put a partial ordering on $\mathcal M$ so that  for two Morse gauges $N, N' \in \mathcal M$, we say $N \leq N'$ if and only if $N(\lambda,\epsilon) \leq N'(\lambda,\epsilon)$ for all $\lambda,\epsilon \in \N$.  Define the \emph{Morse boundary} of $X$ to be
 \begin{equation*} \partial_M X =\varinjlim \partial ^N_M X_e
 \end{equation*} 
with the induced direct limit topology, i.e., a set $U$ is open in $\partial_M X$ if and only if $U \cap \partial ^N_M X_e$ is open for all $N$.   A change in basepoint results in a homeomorphic boundary, justifying the omission of the basepoint from the notation. More generally, we will usually assume the basepoint is fixed and omit it from the notation $\partial ^N_M X_e$ as well.  The reader is warned, however, that unlike $\partial_M X$, these subspaces do depend on a choice of basepoint.

An alternate construction of the Morse boundary is given by the second author and Hume in \cite{cordes-hume}.   Define $X^{(N)}_e$ to be the set of all $y\in X$ such that there exists a $N$--Morse geodesic $[e,y]$ in $X$. Then $X^{(N)}_e$ satisfies the Gromov 4-point definition of hyperbolicity, hence, we may consider its Gromov boundary, $\partial X^{(N)}_e$, and the associated visual metric $d_{(N)}$. The collection of boundaries $\left( \partial X^{(N)}_e, d_{(N)} \right)$ is called the metric Morse boundary of $X$.  
 It is shown in \cite{cordes-hume} that there is are natural homeomorphisms between $\partial X^{(N)}_e$ and $\partial ^N_M X_e$ that induce a homeomorphism on the direct limits. Thus the Morse boundary, $\partial_M X$,  can also be thought of as the direct limit of the metric spaces $\partial X^{(N)}_e$.

We now establish some useful properties of these spaces.
 
 \begin{lemma} \label{lem:top embedding}
For any $N, N'$ with $N<N'$, the inclusion $i \colon \partial_M^N X \hookrightarrow \partial_M^{N'}X$ is a topological embedding.
 \end{lemma}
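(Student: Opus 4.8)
The plan is to verify that $i$ is a continuous injection from a compact space into a Hausdorff space, and then to invoke the standard fact that such a map is automatically a homeomorphism onto its image. Well-definedness and injectivity are immediate: since $N<N'$, every $N$-Morse geodesic ray is in particular $N'$-Morse, so a class in $\partial_M^N X$ has an $N'$-Morse representative based at $e$ and hence defines a class in $\partial_M^{N'}X$, which is the map $i$; and $i$ is injective because the equivalence relation on Morse rays (bounded Hausdorff distance) makes no reference to the gauge, so $i([\alpha])=i([\beta])$ forces $[\alpha]=[\beta]$ already in $\partial_M^N X$.

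For continuity I would argue with the neighborhood systems $\{V_n(\alpha)\}$. After normalizing the constants $C_N$ to be nondecreasing in $N$ — harmless, since the defining property of $C_N$ only becomes easier to satisfy as $C_N$ grows, and since enlarging $C_N$ by a bounded factor leaves the resulting topology unchanged — the inequalities $d(\alpha(t),\gamma(t))<C_N$ for $t<n$ that define $V_n(\alpha)$ in $\partial_M^N X$ imply the corresponding inequalities with $C_{N'}$ that define $V_n(\alpha)$ in $\partial_M^{N'}X$. Hence $i$ carries a basic neighborhood of each $[\alpha]$ into a basic neighborhood of $i([\alpha])$, and since these are neighborhood bases on the respective strata, $i$ is continuous. (One could instead use the model of \cite{cordes-hume}: the inclusion of hyperbolic spaces $X^{(N)}_e\subseteq X^{(N')}_e$ preserves uniform-on-compacta convergence of $N$-Morse rays.)

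It remains to see that $i$ is a topological embedding. The stratum $\partial_M^N X$ is compact, as recorded above, while $\partial_M^{N'}X$ is Hausdorff — indeed metrizable, via its identification in \cite{cordes-hume} with the Gromov boundary $\partial X^{(N')}_e$ carrying a visual metric. A continuous injection from a compact space into a Hausdorff space is a closed map, hence a homeomorphism onto its image with the subspace topology, and so $i$ is a topological embedding. The only point needing any care is keeping the constants $C_N$ on the two strata compatible — arranged by the monotone normalization and the coarseness of the neighborhoods — after which the argument is essentially formal, with the compact-to-Hausdorff principle doing the real work; I do not expect a genuine geometric obstacle.
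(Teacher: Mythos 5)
Your proof is correct. Where it diverges from the paper is in how the embedding (closedness) step is finished: the paper cites Corollary 3.2 of \cite{CordesMorseBoundaries} for continuity and then proves directly that $i$ is a closed map, by taking a sequence in $i(K)$ converging in $\partial_M^{N'}X$ and observing that the limit is again represented by an $N$-Morse ray because limits of $N$-Morse geodesics are $N$-Morse (\cite[Lemma 2.10]{CordesMorseBoundaries}), after which closedness of $K$ in $\partial_M^N X$ finishes the argument. You instead get closedness for free from the compactness of $\partial_M^N X$ (recorded in the background section) together with the Hausdorffness --- indeed metrizability --- of $\partial_M^{N'}X$. The two routes are close cousins, since the compactness of the stratum is itself established using the same ``limits of $N$-Morse rays are $N$-Morse'' fact together with Arzel\`a--Ascoli; yours is the more economical packaging once compactness is taken as known, while the paper's keeps the geometric input explicit and does not need to invoke compactness at all. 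Your continuity argument via the neighborhood bases $V_n(\alpha)$ is also sound: the one point requiring care, which you correctly identify, is arranging $C_N\le C_{N'}$ (e.g.\ replace $C_{N'}$ by $\max(C_N,C_{N'})$, which still satisfies the defining property of the constant), and since the topology on each stratum is the compact-open topology independently of the particular admissible constant, this normalization changes nothing.
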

 
 \begin{proof}
 This map is continuous by Corollary 3.2 of \cite{CordesMorseBoundaries}. What is left to show is that the map is closed. Let $K$ be a closed set in $\partial_M^{N'}X$. Let $\gamma_i$ be a sequence in $i(K)$ converging to a point $\gamma$. Since the $\gamma_i$ are represented by $N$-Morse geodesics and $N$-Morse geodesics converge to $N$-Morse geodesics \cite[Lemma 2.10]{CordesMorseBoundaries} it converges to an $N$-Morse geodesic $\alpha$ in $\partial_M^{N'}X$. But since $i$ is simply the inclusion and $K$ is closed in $\partial_M^N X$, then $\alpha = \gamma$ and thus $i(K)$ is closed.
 \end{proof}

 The following is a general fact about $\sigma$-compact Morse boundaries. It says that one can choose a countable exhaustion of the Morse boundary by strata.
 
 \begin{lemma}\label{lem:increasing_gauge}
 If $\partial_M X$ is $\sigma$-compact, then there exists a sequence  of Morse gauges $N_1,N_2,\dots$ with $N_i\leq N_{i+1}$ so that $\partial_M X=\bigcup \partial X^{(N_i)}_e = \bigcup \partial^{N_i}_M X$.
 \end{lemma}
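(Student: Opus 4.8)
The plan is to produce the sequence by taking a countable compact exhaustion of $\partial_M X$ and showing that each compact piece already sits inside a single stratum $\partial^N_M X$; the heart of the matter is this last assertion, and everything around it is bookkeeping. Granting it, the lemma follows quickly. Since $\partial_M X$ is $\sigma$-compact, write $\partial_M X=\bigcup_m C_m$ with each $C_m$ compact, and, after replacing $C_m$ by $C_1\cup\dots\cup C_m$, with $C_m\subseteq C_{m+1}$. Choose Morse gauges $M_m$ with $C_m\subseteq\partial^{M_m}_M X$ and set $N_m:=\max(M_1,\dots,M_m)$ (the pointwise maximum of finitely many functions $\R^+\times\R^+\to\R^+$ is again such a function, hence a Morse gauge, and $N\le N'$ implies $\partial^N_M X\subseteq\partial^{N'}_M X$ since an $N$-Morse ray is $N'$-Morse). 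Then the $N_m$ are nondecreasing and $C_m\subseteq\partial^{N_m}_M X$, so $\partial_M X=\bigcup_m C_m\subseteq\bigcup_m\partial^{N_m}_M X\subseteq\partial_M X$, which forces $\partial_M X=\bigcup_m\partial^{N_m}_M X$; the equality $\partial_M X=\bigcup_m\partial X^{(N_m)}_e$ is then immediate from the natural homeomorphisms $\partial X^{(N)}_e\cong\partial^N_M X_e$ of \cite{cordes-hume}, which commute with the inclusions.

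A preliminary observation I would record is that each stratum $\partial^N_M X$ is \emph{closed} in $\partial_M X$. For any gauge $N'$ one has $\partial^N_M X\cap\partial^{N'}_M X=\partial^{\min(N,N')}_M X$ (a class lying in both is represented by an $N$-Morse ray from $e$ and by an $N'$-Morse ray from $e$, and the one of smaller gauge witnesses membership in $\partial^{\min(N,N')}_M X$), and by Lemma \ref{lem:top embedding} this is a closed subset of $\partial^{N'}_M X$; since this holds for every $N'$, $\partial^N_M X$ is closed in the direct limit. In particular, if $C\subseteq\partial_M X$ is compact, then $\{C\cap\partial^N_M X\}_N$ is an upward-directed family of closed, hence compact, subsets of $C$ with union $C$.

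It remains to prove that every compact $C\subseteq\partial_M X$ lies in a single stratum, and this is the step I expect to be the real obstacle. The natural strategy is a proof by contradiction that manufactures an infinite closed discrete subset of the compact set $C$: if $C$ lies in no stratum, choose inductively $N_1\le N_2\le\cdots$ and $p_k\in C$ with $p_k\in\partial^{N_k}_M X\setminus\partial^{N_{k-1}}_M X$ (possible since $C\not\subseteq\partial^{N_{k-1}}_M X$ and every point lies in some stratum, enlarging the gauge by a pointwise maximum to keep the sequence nondecreasing), and aim to show $S:=\{p_k\}$ is closed and discrete. The difficulty — and this is where the weight of the lemma lies — is that the directed set of Morse gauges is \emph{not} countably cofinal, so without further care $S$ need not be closed in $\partial_M X$: it could happen that $S\cap\partial^M_M X=S$ for some very large gauge $M$ while $S$ fails to be closed in $\partial^M_M X$. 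This is precisely the point at which one must use that $\partial_M X$ itself is $\sigma$-compact (not merely that $C$ is): using the compact exhaustion $\{C_m\}$ — equivalently, the fact that $\sigma$-compact Morse boundaries are $k_\omega$-spaces, see \cite{k-omega} — one first replaces the cover of $\partial_M X$ by \emph{all} strata with a countable cover adapted to $C$, against which the choices of $N_k$ and $p_k$ can be diagonalized so as to force the $p_k$ to leave every stratum permanently; then $S\cap\partial^M_M X$ is finite for every gauge $M$, so $S$ is closed and discrete in $\partial_M X$, contradicting compactness of $C$. I expect everything except this coherence/diagonalization step to be routine.
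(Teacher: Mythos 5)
Your overall strategy is exactly the paper's: take an increasing compact exhaustion $C_1\subseteq C_2\subseteq\cdots$ of $\partial_M X$, place each $C_m$ inside a single stratum, and make the gauges nondecreasing by taking pointwise maxima. The difference is in how the central claim --- that every compact subset of $\partial_M X$ lies in a single $\partial^N_M X$ --- is handled. The paper does not prove this; it invokes it as a black box, namely Lemma 4.1 of \cite{convex-cocompact}. You correctly isolate this as ``the real obstacle,'' but your attempted proof of it stops short: the ``coherence/diagonalization step'' that would force the points $p_k$ to leave every stratum permanently (so that $S\cap\partial^{M}_M X$ is finite for \emph{every} gauge $M$, not just the ones you chose along the way) is exactly the content of the cited lemma, and you explicitly do not carry it out. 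As written, this is a genuine gap: without that step your set $S$ need not be closed in the direct limit, and the contradiction with compactness of $C$ does not materialize. If you are permitted to cite \cite{convex-cocompact}, the gap closes and your argument becomes the paper's. I will add that your instinct that $\sigma$-compactness of the whole boundary is needed at this point is misplaced: the cited lemma holds for an arbitrary compact subset of any Morse boundary, with no $\sigma$-compactness hypothesis; the mechanism is a diagonalization over the countable set of integer pairs $(\lambda,\epsilon)$ at which gauges are evaluated, not over a compact exhaustion.

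Two smaller remarks. First, your identity $\partial^N_M X\cap\partial^{N'}_M X=\partial^{\min(N,N')}_M X$ is not justified: a point of the intersection has an $N$-Morse representative and an $N'$-Morse representative, but neither of these need be $\min(N,N')$-Morse (the minimum is taken pointwise, and each representative may exceed it at some pair $(\lambda,\epsilon)$). The conclusion you want --- that each stratum is closed in $\partial_M X$ --- is nevertheless true and follows more directly: $\partial^N_M X$ is compact, both $\partial^N_M X$ and $\partial^{N'}_M X$ embed in $\partial^{\max(N,N')}_M X$ by Lemma \ref{lem:top embedding}, and a compact subset of a Hausdorff space is closed. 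Second, this preliminary observation is not actually load-bearing for the final argument, so the flaw there is cosmetic; the substantive issue is only the one identified above.
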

 
 \begin{proof}
 By definition, since $\partial_M X$ is  $\sigma$-compact, then $\partial_M X = \bigcup_{i \in \N} K_i$ where $K_i$ are compact and $K_i \subseteq K_{i+1}$. By Lemma 4.1 of \cite{convex-cocompact} we know that for any compact subset $K \subset \partial_M X$ there exists an $N$ so that $K \subset \partial_M X^{(N)}_e$. Thus for each $K_i$ we have a Morse gauge $N_i$ and the ascending condition on the $K_i$ guarantees that $N_i\leq N_{i+1}$ and since the $K_i$ exhaust $\partial_M X$ then it follows that $\partial_M X=\bigcup \partial X_e^{(N_i)}$.
 \end{proof}

\section{Limits of Cantor spaces}\label{sec:limit_cantor}

In this section we describe particular direct limits of Cantor sets, and more precisely of sequences of Cantor spaces each having empty interior in the next one.

\begin{definition}
 Let $C\subsetneq D$ be Cantor spaces. We say that $C$ is \emph{entwined} in $D$ if $C$ has empty interior in $D$. An $\omega$-Cantor space is a topological space $\varinjlim_{i\in \mathbb N} X_i$, where each $X_i$ is a Cantor space and each $X_i$ is entwined in $X_{i+1}$.
\end{definition}

\begin{example}
Let $C$ be the usual middle-third Cantor space. For each middle third interval $I$,  glue in a copy $C'$ of $C$ by identifying the boundary points of $C'$ with the boundary points of $I$.  Call the resulting space $D$. Then $D$ is again a Cantor space and $C$ is entwined in $D$.  

A similar example can be described in terms of boundaries.  Let $T$ be a (rooted) trivalent tree so that $\partial T$ can be identified with $C$.  Now glue a separate copy of $T$ to each vertex $v$ of $T$ by identifying the root to $v$.  The result is a larger tree $T'$ with $T \subset T'$ such that $\partial T$ is entwined in $\partial T'$.  
\end{example}

The main result of this section is that there is only one $\omega$-Cantor space up to homemorphism:

\begin{theorem}\label{thm:limit_Cantor}
 Any two $\omega$-Cantor spaces are homeomorphic.
\end{theorem}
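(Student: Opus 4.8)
The plan is to use a back-and-forth (Cantor–Schröder–Bernstein style) argument adapted to the direct limit setting. Given two $\omega$-Cantor spaces $X=\varinjlim X_i$ and $Y=\varinjlim Y_j$, I will construct a homeomorphism by building a cofinal chain of homeomorphisms between strata. The key point is that, since each inclusion $X_i\hookrightarrow X_{i+1}$ and $Y_j\hookrightarrow Y_{j+1}$ realizes a Cantor space entwined in a Cantor space, it suffices to understand the ``one-step'' picture well enough to interleave the two towers. Concretely, I would aim to produce homeomorphisms $f_i\colon X_{i}\to Y_{m_i}$ and $g_i\colon Y_{m_i}\to X_{i+1}$, with $m_1<m_2<\cdots$, such that $g_i$ extends $f_i$ (i.e. $g_i\circ f_i$ is the inclusion $X_i\hookrightarrow X_{i+1}$) and $f_{i+1}$ extends $g_i$; the union of this coherent system is then a well-defined bijection $X\to Y$, and it is a homeomorphism precisely because a set is open in a direct limit iff it meets each stratum in an open set, and our maps are stratum-wise homeomorphisms carrying one cofinal family of strata to another.

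The heart of the matter is the following one-step extension lemma, which I expect to be the main obstacle: \emph{if $C$ is entwined in $D$ (both Cantor), $C'$ is entwined in $D'$ (both Cantor), and $h\colon C\to C'$ is a homeomorphism, then there is a homeomorphism $H\colon D\to D'$ extending $h$.} Granting this, the back-and-forth construction is routine: start with any homeomorphism $X_1\to Y_1$ (all Cantor spaces are homeomorphic, Brouwer), then alternately apply the lemma to push forward into the $Y$-tower and back into the $X$-tower, increasing indices each time to guarantee cofinality in both directions. To prove the lemma, I would first reduce to a normal form: using the standard fact that entwined pairs can be modeled combinatorially, realize $C\subset D$ as $\partial S\subset\partial T$ for a subtree $S$ of a locally finite tree $T$ such that $S$ contains no leaf of $T$ and every vertex of $S$ has infinitely many $T$-descendants off $S$ arbitrarily close by — equivalently, $D\setminus C$ is a dense open subset of $D$ whose closure is all of $D$, and $D\setminus C$ is a disjoint union of countably many clopen Cantor pieces accumulating only on $C$. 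One then checks that any entwined pair is homeomorphic (as a pair) to this model, so $h$ transports to a homeomorphism of models, where extension is immediate because off the common part both $D$ and $D'$ look like the same countable disjoint sum of Cantor sets indexed compatibly with a basis of $C\cong C'$.

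Making that last sentence precise is where the care is needed: the decomposition of $D\setminus C$ into clopen pieces is not canonical, so to extend $h$ one must choose compatible clopen partitions of $C$ and $C'$ (via $h$) and then compatible ``fillings'' on matching pieces, using at each stage that (i) the relevant pieces are Cantor spaces, hence homeomorphic, and (ii) the homeomorphisms can be chosen to respect the already-fixed boundary identifications coming from the tree structure. This is a standard but slightly delicate uniformization, closely analogous to the classical proof that any homeomorphism between closed nowhere-dense subsets of a Cantor set extends to a self-homeomorphism of the Cantor set; the new feature is only that the ambient ``extra'' space is itself a controlled limit rather than a single Cantor set. I expect the write-up to spend most of its length establishing the model form of an entwined pair and the compatible-partition bookkeeping, after which both the lemma and the back-and-forth assembly fall into place.
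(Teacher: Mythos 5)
Your proposal is essentially the paper's argument: the same one-step extension lemma (extend a homeomorphism $C\to C'$ of entwined Cantor subspaces to $D\to D'$), proved the same way via compatible nested clopen partitions of $C$ and $C'$ with shrinking diameters, extended into $D$ and $D'$ so that the complementary clopen Cantor pieces match up, and then assembled along the towers. The only differences are cosmetic: the paper runs a plain forward induction $\phi_n\colon X_n\to X_n'$ rather than a back-and-forth (which is unnecessary here since the extension lemma already produces surjections onto the next stratum, and your indexing $g_i\circ f_i=\iota$ would need adjusting), and it works directly with clopen neighborhoods and a metric rather than passing through a tree-model normal form.
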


The rest of this section is devoted to the proof of this theorem. After the preliminary observation in the remark below, we show that, roughly speaking, when $C\subseteq D$ are Cantor spaces, with $C$ entwined in $D$, and $C$ is subdivided in two clopen sets $C_0,C_1$, we can take small clopen neighborhoods of $C_0,C_1$ in $D$.

\begin{remark}\label{rem:clopen_Cantor}
 A clopen set in a Cantor space is either empty or a Cantor space.
\end{remark}

\begin{lemma}\label{lem:extend_clopen}
 Let $C\subseteq D$ be Cantor spaces, with $C$ entwined in $D$. Then if $C=C_0\sqcup C_1$ with $C_i$ clopen and non-empty, then there exist clopen sets $D_0,D_1\subseteq D$ so that:
  \begin{enumerate}
  \item $C_i=D_i\cap C$,
  \item $D_0\cap D_1=\emptyset$,
  \item $D_0\cup D_1\subsetneq D$.
  \item If $d$ is any metric on $D$ compatible with its topology, then for any $\epsilon >0$, we can choose $D_i\subseteq N_{\epsilon}(C_i)$.
  \item $C_i$ and $D_i$ are Cantor spaces, and $C_i$ is entwined in $D_i$.
 \end{enumerate}
\end{lemma}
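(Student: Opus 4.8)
The plan is to build $D_0$ and $D_1$ directly, by a covering argument exploiting that a Cantor space is zero-dimensional, i.e.\ has a basis of clopen sets. Fix once and for all a compatible metric $d$ on $D$ and an $\epsilon>0$, and write $N_\epsilon(A)=\{y\in D:d(y,A)<\epsilon\}$ for the open $\epsilon$-neighbourhood. The sets $D_0,D_1$ we produce will depend on $d$ and $\epsilon$, which is exactly what item~(4) asks for.

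First I would dispose of (5), which turns out to be essentially automatic once (1)--(3) hold and the $D_i$ are non-empty clopen subsets of $D$. Indeed $C_i$ is a non-empty clopen subset of the Cantor space $C$, hence a Cantor space by Remark~\ref{rem:clopen_Cantor}; once (1) holds, $D_i\supseteq C_i$ is a non-empty clopen subset of $D$, hence also a Cantor space. Since $C$ is entwined in $D$, every open subset of $D$ contained in $C$ is empty. Applying this to open subsets of $D_i$ lying inside $C_i\subseteq C$ (legitimate, as $D_i$ is open in $D$) shows $C_i$ has empty interior in $D_i$; and since $D_i$ is non-empty it cannot be contained in $C_i\subseteq C$, so $D_i\supsetneq C_i$. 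Thus $C_i$ is entwined in $D_i$, and it remains only to produce clopen $D_0,D_1\subseteq D$ satisfying (1)--(4).

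To leave room for the strict inclusion in (3), note that $C$ is a proper closed subset of $D$, so $D\setminus C$ is non-empty and open, and therefore contains a non-empty clopen set $W$. Now set $O_0=N_\epsilon(C_0)\cap(D\setminus W)\cap(D\setminus C_1)$, an open set containing $C_0$ (here $C_1$ and $W$ are closed, and $C_0$ is disjoint from both). For each $x\in C_0$ choose, using zero-dimensionality of $D$, a clopen set $U_x$ with $x\in U_x\subseteq O_0$; by compactness of $C_0$ finitely many of them cover $C_0$, and I let $D_0$ be their union, a clopen set with $C_0\subseteq D_0\subseteq O_0$. Next set $O_1=N_\epsilon(C_1)\cap(D\setminus W)\cap(D\setminus D_0)$, which is open ($D_0$ is clopen) and contains $C_1$ (since $C_1\cap D_0\subseteq C_1\cap O_0=\emptyset$, as $O_0\subseteq D\setminus C_1$), and repeat the covering argument inside $O_1$ to obtain a clopen $D_1$ with $C_1\subseteq D_1\subseteq O_1$.

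Finally I would verify (1)--(4), all immediate: $D_0\cap D_1\subseteq D_0\cap(D\setminus D_0)=\emptyset$ gives (2); $D_0\cup D_1\subseteq D\setminus W\subsetneq D$ gives (3) since $W\neq\emptyset$; $D_i\subseteq N_\epsilon(C_i)$ holds by construction, giving (4); and for (1), the chain $C_0\subseteq D_0\cap C\subseteq O_0\cap C\subseteq(D\setminus C_1)\cap C=C_0$ gives $D_0\cap C=C_0$, whence $C_1=C\setminus C_0\subseteq D\setminus D_0$ and $C_1\subseteq D_1\cap C\subseteq O_1\cap C\subseteq(D\setminus D_0)\cap C=C_1$ gives $D_1\cap C=C_1$. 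I do not expect a genuine obstacle here; the only two points needing attention are reserving the clopen set $W$ off $C$ so that the inclusion in (3) is strict (this is where $C\subsetneq D$ enters), and recognising that (5) should not be argued by hand but follows formally from ``clopen subspaces of Cantor spaces are Cantor spaces'' together with the empty-interior hypothesis.
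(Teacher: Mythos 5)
Your proof is correct and rests on the same two ingredients as the paper's: the clopen basis of a Cantor space together with compactness to manufacture the clopen sets $D_i$, and the observation that openness of $D_i$ in $D$ forces $C_i$ to have empty interior in $D_i$, which gives (5). The only difference is organizational --- the paper starts from metric neighbourhoods $N_{\epsilon'}(C_i)$ and makes them clopen by subtracting a finite clopen cover of the compact complement $D-(D'_0\cup D'_1)$, whereas you build each $D_i$ directly as a finite union of clopen basis sets covering the compact set $C_i$ (reserving the clopen set $W$ to secure the strict inclusion in (3)); this is a dual bookkeeping of the same argument.
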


\begin{proof}
 Fix a metric $d$ on $D$ compatible with its topology and $\epsilon>0$. We can choose $\epsilon'>0$ so that $D'_i=N_{\epsilon'}(C_i)$ satisfy the first 4 properties listed above (in fact, up to now we only need $C$ to be properly contained in $D$). However, the $D'_i$ are open, but they might be not closed; we now shrink them to make them clopen. Since every point in a Cantor space has a neighborhood basis of clopen sets, for every point $x$ in $D'=D-(D'_0\cup D'_1)$ there exists a clopen set $U_x$ disjoint from $C_0\cup C_1$ (we also use that $C_0\cup C_1$ is closed). Since $D'$ is compact, it is covered by finitely many $U_{x_i}$, and the union $U$ of such $U_{x_i}$ is clopen. Then, $D_i=D'_i-U$ is open (because it is obtained removing a closed set from an open set) and closed (since the complement is $D'_{1-i}\cup U$, which is open), and it is readily seen that the first 4 properties are satisfied.
 
 The fact that $C_i$ and $D_i$ are Cantor spaces follows now from Remark \ref{rem:clopen_Cantor}. We now argue that $C_i$ is entwined in $D_i$. Notice that if $U$ is open in $D_i$ then it is also open in $D$, since $D_i$ is open. In particular, the interior of $C_i$ as a subspace of $D_i$ coincides with the interior of $C_i$ as a subspace of $D$. Since $C\supseteq C_i$ has empty interior in $D_i$, we have that $C_i$ has empty interior in $D$, whence in $D_i$, as required.
\end{proof}

Next, we show that we can extend partial homeomorphisms of Cantor spaces defined over entwined Cantor subspaces.

\begin{lemma}\label{lem:extend_homeo}
 Let $C\subseteq D$ and $C'\subseteq D'$ be Cantor spaces, with $C$ entwined in $D$ and $C'$ entwined in $D'$. Let $\phi:C\to C'$ be a homeomorphism. Then there exists a homeomorphism $\overline\phi:D\to D'$ that extends $\phi$.
\end{lemma}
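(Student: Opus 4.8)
The plan is to build $\overline\phi$ as a limit of finer and finer \emph{matched} clopen partitions of $D$ and $D'$, in the spirit of the standard back-and-forth proof that all Cantor spaces are homeomorphic, but carrying the constraint imposed by $\phi$ along. First I would fix compatible metrics $d$ on $D$ and $d'$ on $D'$, and call a triple $(\mathcal P,\mathcal P',\beta)$ a \emph{matched pair} if $\mathcal P$ is a finite partition of $D$ into non-empty clopen sets, $\mathcal P'$ is such a partition of $D'$, and $\beta\colon\mathcal P\to\mathcal P'$ is a bijection with $\phi(P\cap C)=\beta(P)\cap C'$ for every $P\in\mathcal P$. Note $(\{D\},\{D'\},D\mapsto D')$ is a matched pair since $\phi(C)=C'$.

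The key step I would establish is a refinement lemma: given a matched pair $(\mathcal P,\mathcal P',\beta)$ and $\epsilon>0$, there is a matched pair $(\mathcal Q,\mathcal Q',\gamma)$ with $\mathcal Q$ refining $\mathcal P$, $\mathcal Q'$ refining $\mathcal P'$, $\gamma$ compatible with $\beta$ (i.e. $Q\subseteq P$ implies $\gamma(Q)\subseteq\beta(P)$), and every piece of $\mathcal Q$ of $d$-diameter $<\epsilon$, every piece of $\mathcal Q'$ of $d'$-diameter $<\epsilon$. I would prove this one piece of $\mathcal P$ at a time. For $P\in\mathcal P$ with $P'=\beta(P)$: if $P\cap C=\emptyset$ then $P'\cap C'=\emptyset$ as well, and I would simply cut $P$ into non-empty clopen $d$-$\epsilon$-small pieces, cut $P'$ into non-empty clopen $d'$-$\epsilon$-small pieces, split further so the two collections have the same size, and match them arbitrarily. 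If $P\cap C\neq\emptyset$, I would first choose a clopen partition $A_1,\dots,A_r$ of $P\cap C$ with $d\text{-diam}(A_j)<\epsilon/3$ and $d'\text{-diam}(\phi(A_j))<\epsilon/3$ for all $j$ (take the common refinement of a fine partition of $P\cap C$ and the $\phi$-pullback of a fine partition of $\phi(P\cap C)$). Then I would iterate Lemma~\ref{lem:extend_clopen}, peeling off one $A_j$ at a time (using property (5) to keep recursing into the ambient Cantor space and property (4) with parameter $\epsilon/3$), to obtain pairwise disjoint clopen $B_1,\dots,B_r\subseteq P$ with $B_j\cap C=A_j$, $B_j\subseteq N_{\epsilon/3}(A_j)$ (hence $d$-diameter $<\epsilon$) and, crucially from property (3), $\bigcup_j B_j\subsetneq P$; likewise pairwise disjoint clopen $B'_1,\dots,B'_r\subseteq P'$ with $B'_j\cap C'=\phi(A_j)$, $d'$-diameter $<\epsilon$, and $\bigcup_j B'_j\subsetneq P'$. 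The leftovers $L=P\setminus\bigcup B_j$ and $L'=P'\setminus\bigcup B'_j$ are then \emph{non-empty} clopen sets disjoint from $C$ and $C'$ respectively; I would cut each into non-empty clopen pieces small in the respective metric, equalize the counts by splitting, and match arbitrarily. Taking $B_j\leftrightarrow B'_j$ together with this matching on the leftover pieces, and running over all $P\in\mathcal P$, yields the refined matched pair. (When $r=1$ I would first split $A_1$ into two clopen pieces so that Lemma~\ref{lem:extend_clopen} applies, using that Cantor spaces are perfect.)

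Granting this, I would iterate from $(\{D\},\{D'\},D\mapsto D')$ with $\epsilon=1/n$ to get matched pairs $(\mathcal P_n,\mathcal P'_n,\beta_n)$, each refining the previous. For $x\in D$ let $P^{(n)}_x$ be the unique piece of $\mathcal P_n$ containing $x$; the sets $\beta_n(P^{(n)}_x)$ are nested clopen subsets of $D'$ of diameter tending to $0$, so their intersection is a single point, which I define to be $\overline\phi(x)$. One then checks directly that $\overline\phi$ is continuous (it sends the clopen neighborhood $P^{(n)}_x$ of $x$ into the small set $\beta_n(P^{(n)}_x)$), that it extends $\phi$ (for $x\in C$ one has $\phi(x)\in\phi(P^{(n)}_x\cap C)=\beta_n(P^{(n)}_x)\cap C'$ for all $n$, forcing $\overline\phi(x)=\phi(x)$), and that the symmetric construction using $\beta_n^{-1}$ gives a continuous map $D'\to D$ which is a two-sided inverse (because the piece of $\mathcal P'_n$ containing $\overline\phi(x)$ is exactly $\beta_n(P^{(n)}_x)$). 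Hence $\overline\phi$ is the desired homeomorphism.

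The main obstacle — and the reason one cannot just ``extend $\phi$'' directly — is bookkeeping: the partitions of $D$ and $D'$ must remain combinatorially identical at every stage while $\phi$ only prescribes their traces on $C$ and $C'$. The two features of Lemma~\ref{lem:extend_clopen} that make this go through are the strict inclusion in property (3), which always leaves a non-empty ``leftover'' on each side that can be partitioned and matched freely (with no reference to $\phi$), and the neighborhood control in property (4), which lets us keep the mesh small \emph{simultaneously} on both sides; this last point is essential, since we need the inverse of $\overline\phi$ to be continuous as well.
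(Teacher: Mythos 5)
Your proof is correct, and it rests on the same engine as the paper's: Lemma~\ref{lem:extend_clopen} applied recursively to produce nested clopen sets on both sides whose traces on $C$, $C'$ are matched by $\phi$ and whose diameters shrink to zero, with the strict inclusion in property~(3) guaranteeing non-empty leftovers and property~(4) controlling the mesh. The difference is in the endgame. The paper builds a binary tree of clopen sets $D_\omega$, observes that $D\setminus C$ decomposes as the disjoint union of the clopen ``gap'' pieces $K_\omega=D_\omega\setminus(D_{\omega0}\cup D_{\omega1})$, each of which is a Cantor space, and then simply \emph{chooses} a homeomorphism $K_\omega\to K'_\omega$ for each $\omega$ (invoking the uniqueness of the Cantor space), defining $\overline\phi$ piecewise and verifying continuity via the basis $\{D'_\omega\}$. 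You instead never choose homeomorphisms on the gaps: you chop the leftovers into small matched clopen pieces at every stage and define $\overline\phi$ as the limit of the nested images, which effectively inlines Brouwer's back-and-forth proof that any two Cantor spaces are homeomorphic and makes continuity of both $\overline\phi$ and its inverse immediate from the construction. Your route is marginally longer on bookkeeping (equalizing piece counts, the $r=1$ splitting trick, checking the pullback refinement keeps both diameters small) but more self-contained; the paper's is shorter because it outsources the gaps to the classification of Cantor spaces. Both are complete proofs.
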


\begin{proof}
 Let $\Omega$ be the set of finite words in the alphabet $\{0,1\}$, and denote by $e$ the empty word. We denote the word obtained by appending $0$ (resp. 1) at the end of the word $\omega\in\Omega$ simply by $\omega0$ (resp. $\omega1$). Fix metrics on $D$ and $D'$ compatible with their respective topologies. We can choose clopen sets $C_{\omega}\subseteq C$ with the following properties:
 \begin{enumerate}
  \item $C_e=C$,
  \item $C_{\omega0},C_{\omega1}\subseteq C_{\omega}$,
  \item $C_{\omega}=C_{\omega0}\sqcup C_{\omega1}$,
  \item for any $\epsilon>0$, there are only finitely many $C_{\omega}$ whose diameter is larger than $\epsilon$.
 \end{enumerate}
 
 Set $C'_{\omega}=\phi(C_{\omega})$, and notice that analogous properties hold for the $C'_{\omega}$ as well. Also, set $D_e=D$, and, inductively on the length of $\omega$, use Lemma \ref{lem:extend_clopen} to construct clopen sets $D_{\omega}\subseteq D$ satisfying:
 \begin{enumerate}
  \item $D_{\omega}\cap C=C_{\omega}$,
  \item $D_{\omega0},D_{\omega1}\subseteq D_\omega$,
  \item $D_{\omega0}\cap D_{\omega1}=\emptyset$,
  \item $K_\omega=D_{\omega}-(D_{\omega0}\cup D_{\omega1})$ is non-empty,
  \item for any $\epsilon>0$, there are only finitely many $D_{\omega}$ whose diameter is larger than $\epsilon$.
 \end{enumerate}
 Also, construct $D'_{\omega}\subseteq D'$ (and $K'_{\omega}$) with the analogous properties. Notice that $D-C=\bigsqcup_{\omega\in\Omega} K_{\omega}$ and that each $K_{\omega}$ (which is clopen) is a Cantor space by Remark \ref{rem:clopen_Cantor}. Similar observations hold for $D'-C'$ and $K'_{\omega}$.
 
 We are now ready to define $\overline \phi$. Choose for each $\omega$ any homeomorphism $\phi_{\omega}:K_{\omega}\to K'_{\omega}$, and set $\overline \phi(x)=x$ if $x\in C$, and $\overline \phi(x)=\phi_{\omega}(x)$ if $x\in K_\omega$. Then $\overline\phi$ is bijective. Hence, since $D,D'$ are compact and Hausdorff, to show that $\overline \phi$ is a homeomorphism we are left to show that it is continuous. Notice that by construction, we have $\overline\phi(D_{\omega})=D'_\omega$ for each $\omega$. It is readily checked that $\{D'_\omega\}$ is a basis for the topology of $D'$. In fact, for any $x\in D'$ and $\epsilon>0$ there exists some $D'_\omega$ of diameter at most $\epsilon$ that contains $x$, since there are infinitely many $D'_\omega$ containing $x$ and only finitely many of them have diameter larger than $\epsilon$. Hence, the preimage under $\overline\phi$ of any member of a neighborhood basis for $D'$ is open, and hence $\overline\phi$ is continuous.
\end{proof}

Finally, we are ready to prove Theorem \ref{thm:limit_Cantor}, by iterative extensions.

\begin{proof}[Proof of Theorem \ref{thm:limit_Cantor}]
 Let $X=\varinjlim_{i\in \mathbb N} X_i$ and $X'=\varinjlim_{i\in \mathbb N} X'_i$ be $\omega$-Cantor spaces. Start with any homeomorphism $\phi_0:X_0\to X'_0$ and  inductively define homeomorphisms $\phi_{n+1}:X_{n+1}\to X'_{n+1}$ extending $\phi_n$; those exist by Lemma \ref{lem:extend_homeo}. Then the $\phi_n$ give a well-defined bijection $X\to X'$, which is a homeomorphism by definition of the direct limit topology.
\end{proof}

\section{$\omega$-Cantor boundaries}

We now study totally disconnected Morse boundaries of finitely generated groups. In this case, we write $\partial_M G$ for the Morse boundary of some (hence any) Cayley group of $G$.  

Following \cite{cordes-hume}, we say that a quasi-convex subspace $Y$ of a geodesic metric space $X$ is \emph{$N$-stable} if every pair of points in $Y$ can be connected by a geodesic which is $N$-Morse in $X$. We say that a subgroup is stable if it is stable as a subspace.

The main theorem in this section is the following. 
\begin{varthm}[Theorem \ref{thm:boundary_is_Cantor}]
 Let $G$ be a finitely generated group. Suppose that $\partial_M G$ is  totally disconnected, $\sigma$-compact, and contains a Cantor subspace.  Then it is either a Cantor space or an $\omega$-Cantor space.  It is a Cantor space if and only if  $G$ is hyperbolic, in which case $G$ is virtually free.
\end{varthm}

We will need the following lemma to construct sequences limiting to a specific boundary point, to show that such boundary point is not isolated.

\begin{lemma}\label{lem:lim}
  Let $G$ be a group, let $\gamma,\alpha_1,\alpha_2$ be Morse rays, with $[\alpha_1]\neq [\alpha_2]$, where all rays are based at $e$. Also, let $g_i=\gamma(i)\in G$ be the sequence of group elements traversed by $\gamma$. Then for some $j\in\{1,2\}$ and some subsequence $i(n)$ we have $[g_{i(n)}\alpha_j]\to [\gamma]$.
\end{lemma}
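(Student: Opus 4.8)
The plan is to exploit the fact that the group elements $g_i = \gamma(i)$ translate the two fixed rays $\alpha_1,\alpha_2$ to produce rays that ``shadow'' $\gamma$ for longer and longer, and then pass to a convergent subsequence. The key observation is that for each $i$, the translated ray $g_i\alpha_j$ is based at $g_i = \gamma(i)$, a point lying on $\gamma$ at distance $i$ from $e$; since $\gamma$ is Morse, say $N$-Morse, and left translation is an isometry, $g_i\alpha_j$ is $N'$-Morse for some $N'$ depending only on $N$ (it is a translate of a subgeodesic behavior-wise, and by Lemma \ref{lem:morse subgeo} and translation-invariance of Morse gauges). Concatenating the subgeodesic $\gamma|_{[0,i]}$ with $g_i\alpha_j$ and applying Lemma \ref{lem:triangles} repeatedly, a geodesic from $e$ to $g_i\alpha_j(t)$ is $N''$-Morse with $N''$ depending only on $N$; hence all the rays representing $[g_i\alpha_j]$ can be taken to lie in a single stratum $\partial^{N''}_M X$, which is compact.

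First I would set up the following dichotomy. Look at the geodesic triangle with vertices $e$, $\gamma(\infty)=[\gamma]$, and $g_i\alpha_j(\infty)=[g_i\alpha_j]$. Since $[\alpha_1]\neq[\alpha_2]$, the translated endpoints $[g_i\alpha_1]$ and $[g_i\alpha_2]$ are distinct for every $i$, and in fact their ``distance apart as seen from $g_i$'' is uniformly bounded below — more precisely, a geodesic from $g_i\alpha_1(\infty)$ to $g_i\alpha_2(\infty)$ passes within a bounded (translation-invariant) distance of $g_i$. The point $g_i$ itself tends to infinity along $\gamma$. So for at least one choice of $j\in\{1,2\}$ (and along a subsequence realizing it), the ``ideal triangle'' with vertices $e$, $[\gamma]$, $[g_i\alpha_j]$ has its center drifting out along $\gamma$: informally, the ray $[e,g_i\alpha_j]$ must fellow-travel $\gamma$ for roughly the first $i$ units, because otherwise the geodesic from $[\gamma]$ to $[g_i\alpha_j]$ would have to come back near $e$, forcing it to pass near $g_i$ from the wrong side — contradicting the thin-triangle estimate in the Morse stratum. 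I would make this precise using the Gromov-hyperbolicity of $X^{(N'')}_e$ (from \cite{cordes-hume}): in a $\delta$-hyperbolic space, if $p$ lies on a geodesic $[x,y]$ at distance $i$ from $x$ and $z$ is any point with $(x\cdot z)_p$ small (i.e.\ a geodesic $[z,x]$ passes near $p$), then $[x,z]$ and $[x,y]$ fellow-travel up to near $p$. Here $x=e$, $y=[\gamma]$, $p=g_i$, $z=[g_i\alpha_j]$, and the ``$j$'' is chosen so that $(e\cdot z)_{g_i}$ stays bounded — one of the two must work since the two rays $g_i\alpha_1, g_i\alpha_2$ diverge from each other near $g_i$, so at least one of them initially heads ``away from $e$''.

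Then I would conclude: having fixed $j$ and a subsequence $i(n)$, all $[g_{i(n)}\alpha_j]$ lie in the compact stratum $\partial^{N''}_M X_e$, so a further subsequence converges to some $[\beta]$. The fellow-traveling established above says that a representative geodesic ray for $[g_{i(n)}\alpha_j]$ based at $e$ agrees with $\gamma$ (up to the uniform Morse constant $C_{N''}$) on $[0, i(n) - O(1)]$; since $i(n)\to\infty$, passing to the limit in the neighborhood-basis description $V_n([\gamma])$ of the compact-open topology on $\partial^{N''}_M X_e$ forces $[\beta]=[\gamma]$. Hence $[g_{i(n)}\alpha_j]\to[\gamma]$, as desired.

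The main obstacle I anticipate is making the dichotomy ``one of $j=1,2$ works'' genuinely rigorous: one has to rule out the pathological possibility that for \emph{every} $i$, \emph{both} $[g_i\alpha_1]$ and $[g_i\alpha_2]$ happen to be ``on the $e$-side'' of $g_i$, i.e.\ that the geodesics $[e, g_i\alpha_j]$ both backtrack rather than fellow-travel $\gamma$. The resolution is the uniform lower bound on how far apart $g_i\alpha_1(\infty)$ and $g_i\alpha_2(\infty)$ are from the vantage point $g_i$: in the $\delta$-hyperbolic stratum, $(g_i\alpha_1(\infty) \cdot g_i\alpha_2(\infty))_{g_i}$ is uniformly bounded (being a translate of $([\alpha_1]\cdot[\alpha_2])_e$, which is finite because $[\alpha_1]\neq[\alpha_2]$), and the three Gromov products $(e\cdot g_i\alpha_1(\infty))_{g_i}$, $(e\cdot g_i\alpha_2(\infty))_{g_i}$, $(g_i\alpha_1(\infty)\cdot g_i\alpha_2(\infty))_{g_i}$ cannot all be large simultaneously by the hyperbolic inequality. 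So at least one of the first two is bounded, which is exactly the condition needed to run the fellow-traveling argument for that $j$. I would write this step carefully, as it is the crux; the rest is bookkeeping with the compact-open topology and the uniform Morse gauges supplied by Lemmas \ref{lem:morse subgeo} and \ref{lem:triangles}.
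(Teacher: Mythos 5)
Your proposal is correct and follows essentially the same route as the paper: uniform Morse gauges for the rays $[e,g_i\alpha_j(\infty)]$ via Lemma \ref{lem:triangles}, slimness/hyperbolicity of uniformly Morse (ideal) triangles, and the crux that $(g_i\alpha_1,g_i\alpha_2)_{g_i}=(\alpha_1,\alpha_2)_e$ is finite since $[\alpha_1]\neq[\alpha_2]$, so the two translates cannot both backtrack toward $e$. The paper phrases this as a contradiction while you run it directly through the Gromov 4-point inequality (and add a harmless compactness detour at the end), but the underlying argument is the same.
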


\begin{proof}
By Lemma \ref{lem:triangles},  if two legs of a geodesic triangle are $N$-Morse then the third leg is $N'$-Morse where $N'$ depends only on $N$.  In our case, this says that there exists $N$, depending on the Morse gauges of $\gamma,\alpha_1,\alpha_2$, so that there exist $N$-Morse geodesic rays $\beta^j_i$ based at $e$ with $[\beta^j_i]=[g_i\alpha_j]$. Moreover, there exists $C$, again depending only on the various Morse gauges, so that all triangles $\gamma([0,i])\cup \beta^j_i\cup g_i\alpha_j$ are $C$-slim \cite[Lemma 2.2]{CordesMorseBoundaries}.

\begin{figure}[h]
 \includegraphics[scale=0.7]{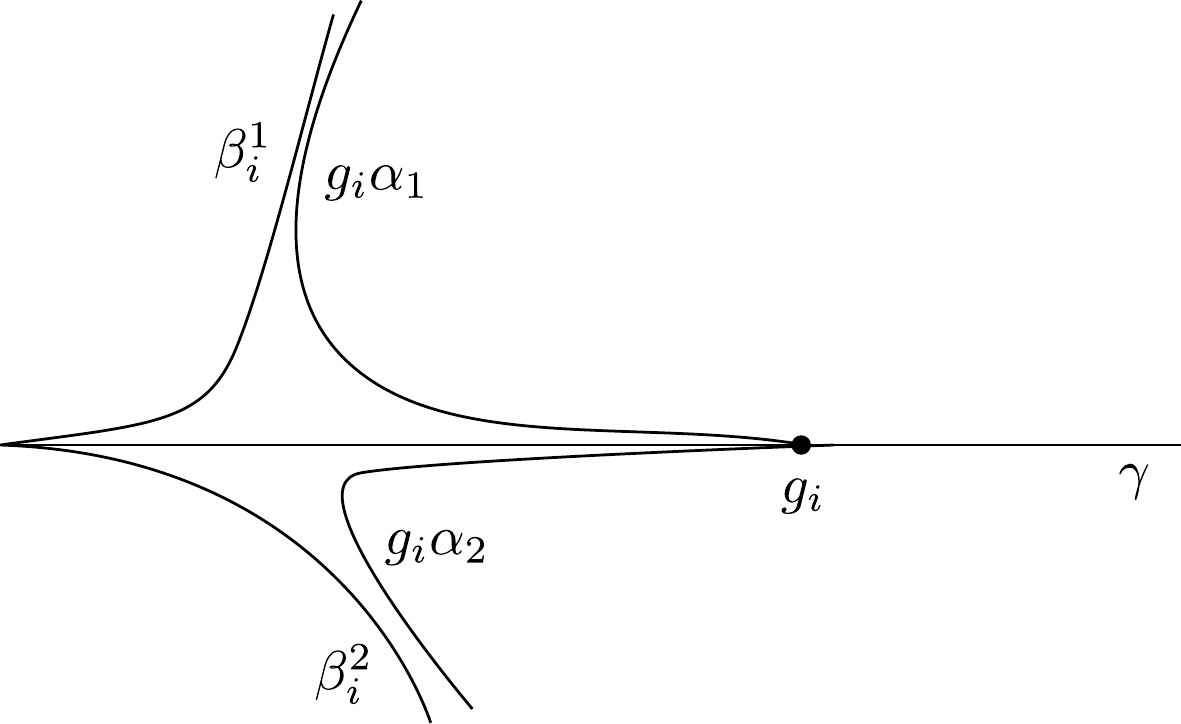}
 \caption{If the $\beta^j_i$ do not converge to $\gamma$, then the $g_i\alpha_j$ both backtrack a lot along $\gamma$. But then, this means that the $\alpha_j$ fellow-travel for arbitrarily long times.}
\end{figure}

 Suppose by contradiction that for $j=1,2$ no subsequence of $\beta^j_i$ converges to $\gamma$. This means that for all $i,j$ the Gromov products $(\gamma,\beta^j_i)_e$ are uniformly bounded by, say, $R$. By slimness, this says that there exists $R'=R'(R,C)$ sush that each $g_i\alpha_j$ intersects  $\mathcal{N}_{R'}(e)$. As a consequence, the Gromov products $(g_i\alpha_1,g_i\alpha_2)_{g_i}$ diverge. However, all said Gromov products are equal to $(\alpha_1,\alpha_2)_{e}$.  This is impossible unless this product is infinite, and hence, $[\alpha_1]=[\alpha_2]$, contradicting one of the hypotheses.
\end{proof}

We now augment stable subsets of $G$ to make their boundary a Cantor space.

\begin{lemma} \label{lem:Cantor_subspace}
 Let $G$ be a group with totally disconnected Morse boundary and a Cantor space $C_0 \subset \partial_M G$. Then for every Morse gauge $N$ there exists an $N'$ and a Cantor space $C\subseteq \partial_M G$ with $\partial_M^N G\subseteq C \subseteq \partial_M^{N'} G$. 
 \end{lemma}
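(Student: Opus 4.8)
The plan is to take $C = \partial_M^N G$ and enlarge it inside $\partial_M G$ by attaching small translated copies of the given Cantor space $C_0$ near each isolated point, in such a way that the result is still a stratum-bounded compact set with no isolated points, hence a Cantor space. First I would record the basic structure of $\partial_M^N G$: it is compact (as a stratum), totally disconnected (as a subspace of a totally disconnected space), and metrizable, hence it is a countable union of a Cantor set (possibly empty) and a countable discrete set of isolated points; since by hypothesis $\partial_M G$ is not itself compact we will need to use $C_0$ to kill the isolated points, but the non-isolated part is already fine. Let $\{p_k\}$ be the (at most countable) set of isolated points of $\partial_M^N G$, each represented by an $N$-Morse ray $\gamma_k$ based at $e$. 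The goal is to glue, near $p_k$, a sequence of shrinking clopen Cantor sets accumulating only at $p_k$, chosen so that the union of everything stays inside a single stratum $\partial_M^{N'} G$.

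The key construction is the one provided by Lemma \ref{lem:lim}. The Cantor space $C_0$ contains at least two distinct points $[\alpha_1],[\alpha_2]$ (in fact a whole Cantor set of them), represented by Morse rays based at $e$. Applying Lemma \ref{lem:lim} to $\gamma_k$ and $\alpha_1,\alpha_2$, we obtain $j(k)\in\{1,2\}$ and a subsequence of the group elements $g^k_i=\gamma_k(i)$ along $\gamma_k$ such that $[g^k_{i(n)}\alpha_{j(k)}]\to[\gamma_k]=p_k$ as $n\to\infty$. More is true from the proof of that lemma: the translates $g^k_i\alpha_{j(k)}$, and indeed the translates $g^k_i C_0$ of the whole Cantor space $C_0$, are uniformly Morse — their Morse gauge depends only on $N$ and on the Morse gauge of $C_0$ via Lemma \ref{lem:triangles}, since $C_0$ sits inside some stratum $\partial_M^{N_0}G$ by compactness together with Lemma 4.1 of \cite{convex-cocompact}. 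So there is a single Morse gauge $N'\geq N$ (depending only on $N$ and $N_0$) such that every translate $g^k_i C_0$, re-based at $e$, lies in $\partial_M^{N'}G$. Since each $g^k_iC_0$ is a Cantor space inside the totally disconnected compact metrizable space $\partial_M^{N'}G$, we may pass to clopen (in $\partial_M^{N'}G$) Cantor subsets $Q^k_n\subseteq g^k_{i(n)}C_0$ that contain the point $[g^k_{i(n)}\alpha_{j(k)}]$; shrinking them using a fixed metric on $\partial_M^{N'}G$, we arrange that $Q^k_n$ has diameter tending to $0$ in $n$, that the $Q^k_n$ are pairwise disjoint for fixed $k$, that they are disjoint from $\partial_M^N G$, and — using a diagonal argument over $k$ together with the fact that the $p_k$ are isolated in $\partial_M^N G$ — that for each $k$ only finitely many among all the $Q^{k'}_n$ meet a fixed neighborhood of $p_k$, so that the only accumulation points of $\bigcup_{k,n}Q^k_n$ inside $\partial_M^{N'}G$ lie in $\partial_M^N G\cup\{p_k\}=\partial_M^N G$.

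Now set $C=\partial_M^N G\cup\bigcup_{k,n}Q^k_n$. By construction $\partial_M^N G\subseteq C\subseteq\partial_M^{N'}G$, giving the stratum containment. It remains to check $C$ is a Cantor space, i.e. compact, metrizable, totally disconnected, and perfect. Metrizability and total disconnectedness are inherited from $\partial_M^{N'}G$. For compactness, any sequence in $C$ either has infinitely many terms in a single compact piece $\partial_M^N G$ or $Q^k_n$, or its terms spread over infinitely many $Q^k_n$, in which case (by the accumulation control above) it subconverges to a point of $\partial_M^N G\subseteq C$; so $C$ is closed in $\partial_M^{N'}G$, hence compact. For perfectness: points of $\bigcup Q^k_n$ are non-isolated in $C$ because each $Q^k_n$ is a non-empty clopen subset of the perfect space $g^k_{i(n)}C_0$; a non-isolated point of $\partial_M^N G$ is still non-isolated in $C$; and an isolated point $p_k$ of $\partial_M^N G$ is non-isolated in $C$ precisely because $[g^k_{i(n)}\alpha_{j(k)}]\in Q^k_n$ and $[g^k_{i(n)}\alpha_{j(k)}]\to p_k$. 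Hence $C$ has no isolated points, so $C$ is a Cantor space, completing the proof.

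The main obstacle I anticipate is the simultaneous bookkeeping in the third paragraph: one must shrink and place the clopen pieces $Q^k_n$ so that all four requirements hold at once — disjointness, vanishing diameters, disjointness from $\partial_M^N G$, and local finiteness away from the $p_k$ — while keeping them inside a \emph{single} stratum $\partial_M^{N'}G$ whose gauge does not depend on $k$. The uniform Morse bound from Lemma \ref{lem:triangles} (applied to triangles with vertices $e$, $g^k_i$, and the far endpoint of the translate of $C_0$) is what makes the uniform $N'$ possible; extracting it cleanly is the technical heart of the argument.
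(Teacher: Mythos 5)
Your construction rests on the same two pillars as the paper's proof -- Lemma \ref{lem:lim} to make translates of $C_0$ accumulate onto points of $\partial_M^N G$, and Lemma \ref{lem:triangles} (plus compactness of $C_0$, which places it in some $\partial_M^{N_0}G$) to get a single gauge $N'$ containing all translates -- so the core idea is right and matches the paper. The execution differs, though, and the paper's is considerably lighter: rather than selecting translates near each isolated point and arranging disjoint, shrinking, locally finite clopen pieces $Q^k_n$, the paper simply sets $A=\partial_M^N G\cup\bigcup_{g\in G^{(N)}_e}gC_0$ and takes $C=\overline{A}$. Compactness, metrizability and total disconnectedness are then automatic (closed subspace of $\partial_M^{N'}G$), and perfectness needs no bookkeeping: points of $C-A$ and of the $gC_0$ are obviously non-isolated, and \emph{every} point of $\partial_M^N G$ (not just the isolated ones) is a limit of points of $A$ by Lemma \ref{lem:lim}. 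Your version would also work, but a few details need repair: (i) a compact totally disconnected metrizable space is \emph{not} in general a Cantor set plus a discrete set of isolated points (the Cantor--Bendixson scattered part is countable but its points need not be isolated); this does not derail your strategy, since adding points can only destroy isolation, but the structural claim as stated is false. (ii) You cannot in general choose $Q^k_n$ clopen in $\partial_M^{N'}G$ or disjoint from $\partial_M^N G$, since $g^k_{i(n)}C_0$ need not be open in the stratum and may meet $\partial_M^N G$ densely near the relevant point; fortunately neither requirement is actually needed. (iii) Lemma \ref{lem:lim} permits the degenerate case $[g^k_{i(n)}\alpha_{j(k)}]=p_k$ for all $n$, in which your prescription for $Q^k_n$ is self-contradictory; one must then instead use that $p_k$ lies in the perfect set $g^k_{i(n)}C_0$ to see it is non-isolated. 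Taking the closure of the full union, as the paper does, sidesteps all three points at once.
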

 
 \begin{proof}
Since $C_0$ is compact by \cite[Lemma 4.1]{convex-cocompact} we know that $C_0 \subset \partial_M^{N_0} G$ for some Morse gauge $N_0$. Since $\partial_M G$ is totally disconnected, it follows that for any Morse gauge $N$, so is $\partial_M^N G$. We will consider some $N$ so that  $\partial_M^N G$ is non-empty.  Define
$$A= \partial_M^N G \, \cup (\bigcup_{g\in G^{(N)}_e}  gC_0) \subseteq \partial_M G.$$ 
In fact, by Lemma \ref{lem:triangles}, there exists $N'$ depending on $N,N_0$ only so that  $A\subseteq \partial^{N'}_M G$.

 Let now $C=\overline{A}$, and we claim that $C$ is a Cantor space.  To check this we need show that $C$ is non-empty, perfect, compact, totally disconnected, and metrizable.  It is metrizable, totally disconnected, and compact because it is a closed subspace of $\partial _M^{N'} G$. It is clearly non-empty so what is left to show is that it is perfect, i.e., $C$ has no isolated points. Any point in $C-A$ is clearly not isolated. Also, any point in some $gC_0$ is not isolated, since $gC_0$ is a Cantor space. Finally, any point in $\partial_M^N G$ is a limit of points in $C$ (in fact, even in $A$) by Lemma \ref{lem:lim}.
 \end{proof}
  
 We now show that the subsets $\partial^{N}_M G$ have empty interior in $\partial^{N'}_M G$ whenever $N'\gg N$.
 
 \begin{lemma} \label{lem:entwined}
  If $\partial^{N}_M G\subsetneq \partial_M G$ and $\partial_M G$ is not compact, then there exists $N'$ so that for all $N''\geq N'$, $\partial^{N}_M G$ has empty interior in $\partial^{N''}_M G$.
 \end{lemma}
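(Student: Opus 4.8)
The plan is to argue by contradiction, exploiting the fact that if $\partial_M^N G$ had nonempty interior in $\partial_M^{N''} G$ for arbitrarily large $N''$, then some clopen piece $U$ of $\partial_M^N G$ would be open in every stratum containing it, hence open in $\partial_M G$ by the definition of the direct limit topology. First I would fix $N$ with $\partial_M^N G \subsetneq \partial_M G$ and pick a point $[\beta] \in \partial_M G \setminus \partial_M^N G$, represented by a Morse geodesic ray $\beta$ based at $e$ with Morse gauge, say, $M_\beta$. Since $\beta \notin \partial_M^N G$, the point $[\beta]$ does not belong to the (closed, by Lemma~\ref{lem:top embedding}) subspace $\partial_M^N G$ of any stratum large enough to contain it. Now, suppose for contradiction that no such $N'$ exists, i.e., for every $N'$ there is some $N'' \geq N'$ with $\partial_M^N G$ having nonempty interior in $\partial_M^{N''} G$; since the interior can only grow as $N''$ grows (the inclusions are topological embeddings, so an open subset of $\partial_M^N G$ inside $\partial_M^{N''}G$ stays open inside any larger $\partial_M^{N'''}G$), we may upgrade this to: there is a nonempty set $U$, open in $\partial_M^N G$, with $U$ open in $\partial_M^{N''} G$ for all sufficiently large $N''$.

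The key step is then to show $U$ is open in $\partial_M G$ and derive a contradiction with non-compactness. A subset $U$ is open in $\partial_M G$ iff $U \cap \partial_M^L G$ is open in $\partial_M^L G$ for every Morse gauge $L$. For $L$ large enough this holds by the previous paragraph. For smaller $L$ (those not comparable to, or below, our threshold $N''$), I would pass to a common upper bound $L' \geq L, N''$: since $U$ is open in $\partial_M^{L'} G$ and $\partial_M^L G \hookrightarrow \partial_M^{L'} G$ is a topological embedding (Lemma~\ref{lem:top embedding}), $U \cap \partial_M^L G$ is open in $\partial_M^L G$. Hence $U$ is open in $\partial_M G$. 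But $U$ is also a nonempty subset of the compact space $\partial_M^N G$; being clopen in that compact space (shrink $U$ to a clopen neighborhood inside $\partial_M^N G$, using total disconnectedness), $U$ is itself compact. So we would have a nonempty compact open subset of $\partial_M G$. The remaining step is to see this forces $\partial_M G$ compact: a compact open set $U$ in $\partial_M G$ has complement which is... not obviously closed. Here I would instead use that $\partial_M G$ is totally disconnected and $\sigma$-compact, and that a nonempty \emph{clopen} subset cannot be produced unless, roughly, the whole boundary is "small"; more carefully, I expect the actual argument to use Lemma~\ref{lem:lim} once more: any point $[\gamma] \in \partial_M G$, together with two inequivalent points of $U$ (which exist since $U$, being clopen in a Cantor-containing perfect-ish stratum, is infinite — or one simply uses the Cantor subspace hypothesis to find them), yields a sequence $g_{i(n)}\alpha_j \to \gamma$ with the $\alpha_j$ translates landing in $U$; if $U$ is open this would put $[\gamma]$ in the closure of $U$, and if $U$ is also closed, $[\gamma] \in U$, forcing $U = \partial_M G$ and hence $\partial_M G$ compact, a contradiction.

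The main obstacle I anticipate is precisely this last step: turning "nonempty compact open subset of $\partial_M G$" into "$\partial_M G$ is compact." The naive approach fails because the complement of an open set in a direct limit need not be closed in any useful sense, and $\partial_M G$ is not metrizable. The right fix is to observe that a compact open $U \subseteq \partial_M G$ is automatically clopen provided $\partial_M G$ is Hausdorff (compact subsets of Hausdorff spaces are closed) — and Morse boundaries are Hausdorff — so $U$ is clopen, and then the Lemma~\ref{lem:lim} argument above shows every boundary point lies in $U$. Alternatively, and perhaps more cleanly, one shows directly: if $\partial_M^N G$ has nonempty interior $U$ in cofinally many strata, then translating $U$ by the group (using that $G$ acts on $\partial_M G$ by homeomorphisms, and that $G \cdot [\beta]$ for a non-$N$-Morse $[\beta]$ accumulates everywhere by Lemma~\ref{lem:lim}) would force $[\beta]$ itself into $U \subseteq \partial_M^N G$, the desired contradiction. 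I would present whichever of these two routes turns out to need the fewest auxiliary facts about the $G$-action; both hinge on Lemma~\ref{lem:lim} supplying the accumulation needed to "spread" the open set across the whole non-compact boundary.
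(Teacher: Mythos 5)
There is a genuine gap at the very first step of your contradiction scheme: the monotonicity claim is backwards. If $A\subseteq X\subseteq Y$ and $X$ carries the subspace topology from $Y$, then the interior of $A$ relative to $Y$ is \emph{contained in} its interior relative to $X$; an open subset of a smaller stratum need not stay open in a larger one. So as $N''$ increases, the interior of $\partial_M^N G$ in $\partial_M^{N''}G$ can only shrink (indeed, that it shrinks to nothing is exactly what the lemma asserts). The negation of the lemma therefore gives you, for each $N''$, a nonempty open interior $U_{N''}$, and these form a \emph{decreasing} net; but a decreasing net of nonempty open subsets of a compact space can have empty intersection (think of punctured balls shrinking onto a point of a Cantor set), so there need not exist a single nonempty $U$ that is open in all sufficiently large strata. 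Without that $U$, the rest of the argument does not get off the ground. A second problem arises even granting such a $U$: Lemma \ref{lem:lim} produces translates $[g_{i(n)}\alpha_j]$ converging to $[\gamma]$, but these translates have no reason to lie in $U$ (or in your clopen shrinking $V$), since $U$ is not $G$-invariant; so the step ``$[\gamma]\in\overline{V}$, hence $V=\partial_M G$'' does not follow. You also invoke total disconnectedness and a Cantor subspace, neither of which is a hypothesis of this lemma.

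Your ``alternative route'' in the last sentences is in fact the skeleton of the paper's actual proof, run directly rather than by contradiction: for each $p\in\partial_M^N G$ represented by $\gamma$ with $g_i=\gamma(i)$, one chooses $z\in\partial_M G$ with $z\notin\partial_M^{N_1}G$ (where $N_1$ is the gauge produced by Lemma \ref{lem:triangles}) and $z\neq q=\lim g_i^{-1}$ --- non-compactness is used precisely to guarantee such a $z$ exists --- and then shows (i) $g_iz\to p$ inside a single stratum $\partial_M^{N'}G$, which requires the convergence results of Liu rather than Lemma \ref{lem:lim}, and (ii) $g_iz\notin\partial_M^N G$, via the geodesic triangle with vertices $e$, $g_i$, $g_iz$ and Lemma \ref{lem:triangles}: if the ray from $e$ to $g_iz$ were $N$-Morse, then $z$ would be $N_1$-Morse, contradicting the choice of $z$. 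This yields $\partial_M^N G\subseteq\overline{\partial_M^{N'}G-\partial_M^N G}$, and the correct direction of monotonicity (the complement, hence its closure, only grows with the stratum) then gives the statement for all $N''\geq N'$. As written, your sketch omits all three of these ingredients, so while the final paragraph contains the right germ of an idea, the proposal as a whole is not a proof.
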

\begin{proof} Note that it suffices to find $N'$ so that $\partial_M^N G \subseteq \overline{\partial_M^{N'} G-\partial_M^N G}$ because then it automatically follows that for all $N''\geq N'$,  $\partial_M^N G \subseteq \overline{\partial_M^{N''} G -\partial_M^N G}$, which is the same as $\partial_M^{N} G$ having empty interior in $\partial_M^{N''} G$.

Let $\gamma$ be a geodesic in $G$, based at $e$, representing a point $p \in \partial_M^N G$.   Let $\gamma(i)=g_i \in G$.  Then the sequence of points $(g_i)$ converges to $p$.  Passing to a subsequence if necessary, the sequence  $(g_i^{-1})$ also converges to some point $q \in \partial_M G$.  Since $\partial_M G$ is not compact, we can choose a point $z \in \partial_M G$ with $z \neq q$ and $z \notin \partial_M^{N_1} G$, where $N_1$ is the Morse gauge from Lemma \ref{lem:triangles}.
It follows from \cite{Liu:dynamics}, Lemmas 5.3 and 6.9, that for some $N'$,  $g_iz$ converges to $p$ in $\partial_M^{N'} G$.  

It remains to show that $g_i z \notin \partial_M^{N} G$ for all $i$.  To see this, let $\beta_i$ be a geodesic ray based at $e$ representing $g_i z$ and let $\gamma_i$ be the segment of $\gamma$ from $e$ to $g_i$.  Then $\gamma_i, \beta_i, g_i\beta_0$ form a geodesic triangle with vertices $e, g_i, g_iz$.  By Lemma \ref{lem:triangles}, if $\gamma_i$ and $\beta_i$ are both $N$-Morse, then $g_i\beta_0$ is $N_1$-Morse, and hence its translate,  $\beta_0$, is  also $N_1$-Morse.  But $z=[\beta_0]$, so this contradicts our choice of $z$ and we conclude that $\beta_i$ is not $N$-Morse.  Thus we have $g_i z \in {\partial_M^{N'} G-\partial_M^N G}$ for all $i$.
\end{proof}
 
We are ready to prove Theorem \ref{thm:boundary_is_Cantor}.
 
\begin{proof}[Proof of Theorem \ref{thm:boundary_is_Cantor}]
If $\partial_M G$ is compact, then $G$ is hyperbolic by Theorem 4.3 of \cite{convex-cocompact}.  Since $\partial_MG$ is a non-empty, compact, perfect, totally disconnected, metrizable metric space, it follows that its boundary is a Cantor space by \cite{Brouwer}. Since $G$ is a hyperbolic group with Cantor space boundary, $G$ has to be virtually free, see e.g. \cite[Theorem 8.1]{KapovichBenakli} and references therein.

Assume now that $\partial_M G$ is not compact, in which case $G$ is not hyperbolic again by Theorem 4.3 of \cite{convex-cocompact}. Since $\partial_M G$ is $\sigma$-compact we know by Lemma \ref{lem:increasing_gauge} that $\partial_M G = \varinjlim \partial^N_M G$ can be chosen to be a countable limit over gauges $N_1, N_2, \ldots$. By Lemma \ref{lem:Cantor_subspace} there is a Cantor space $C_1$ such that $\partial^{N_1}_M G \subset C_1  \subset \partial^{N_{j(2)}}_M G$ for a sufficiently large $j(2)$. In view of Lemma \ref{lem:entwined}, we may increase $j(2)$ to ensure that $\partial^{N_1}_M G$ has empty interior in $\partial^{N_{j(2)}}_M G$. Proceeding inductively we find a sequence $1=j(1)<j(2)<\cdots$ and Cantor spaces $C_1, C_2, C_3, \ldots$ so that $\partial^{N_{j(i)}}_M G \subset C_i  \subset \partial^{N_{j(i+1)}}_M G$. Since  $\partial^{N_{j(i+1)}}_M G$ has empty interior in $ \partial^{N_{j(i+2)}}_M G$, it follows that $C_i$ is entwined in $C_{i+1}$. Moreover, since $\partial_M G = \varinjlim C_i$ it follows that $\partial_M G$ is an $\omega$-Cantor space.
\end{proof}

 \subsection{Totally disconnected boundary versus totally disconnected levels}
 
 In this subsection we show that, when $\partial_M X$ is $\sigma$-compact, then it is totally disconnected if and only if all its strata are.  This will be needed in our analysis of Morse boundaries of right-angled Artin groups.

\begin{proposition} \label{prop:totally disconnected}
 Let $X$ be a proper metric space, let $e\in X$, and suppose that $\partial_M X$ is $\sigma$-compact. Then $\partial_M X$ is totally disconnected if and only if $\partial^{N}_M X$ is totally disconnected for every Morse gauge $N$.
\end{proposition}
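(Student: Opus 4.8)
The plan is to prove the two implications separately; the implication ``$\partial_M X$ totally disconnected $\Rightarrow$ every $\partial^N_M X$ totally disconnected'' is purely formal, while the converse carries the content and is where the $\sigma$-compactness hypothesis enters. For the formal direction I would use only that, for each Morse gauge $N$, the canonical map $\iota_N\colon\partial^N_M X\to\partial_M X$ is continuous (because $\partial_M X$ carries the direct limit topology, so every canonical map into it is continuous) and injective (because $\partial^N_M X$ is literally a subset of $\partial_M X$). Hence if some $\partial^N_M X$ contained a connected subset $C$ with more than one point, then $\iota_N(C)$ would be a connected subset of $\partial_M X$ with more than one point, contradicting the hypothesis. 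No $\sigma$-compactness is needed here.

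For the converse, assume every stratum $\partial^N_M X$ is totally disconnected; I would show that any two distinct points $p,q$ of $\partial_M X$ are separated by a clopen set, which forces all connected components to be singletons. Using $\sigma$-compactness, Lemma \ref{lem:increasing_gauge} gives Morse gauges $N_1\le N_2\le\cdots$ with $\partial_M X=\bigcup_i\partial^{N_i}_M X$, and --- since compact subsets of a $\sigma$-compact Morse boundary lie in a single stratum, a standard $k_\omega$-space feature (cf. \cite{k-omega}) --- this realizes $\partial_M X$ as the direct limit of the countable chain $\partial^{N_1}_M X\subseteq\partial^{N_2}_M X\subseteq\cdots$; so it suffices to build a set $U$ meeting each $\partial^{N_i}_M X$ in a clopen set. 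Each $\partial^{N_i}_M X$ is compact, metrizable and totally disconnected, hence zero-dimensional (it has a basis of clopen sets, and disjoint closed subsets can be separated by a clopen set), and by Lemma \ref{lem:top embedding} each $\partial^{N_i}_M X$ is closed in $\partial^{N_{i+1}}_M X$, being the continuous image of a compact space in a Hausdorff one. Starting from a clopen subset of $\partial^{N_k}_M X$ (for $k$ large enough that $p,q\in\partial^{N_k}_M X$) containing $p$ but not $q$, one extends step by step: given a clopen $U_i\subseteq\partial^{N_i}_M X$, the sets $U_i$ and $\partial^{N_i}_M X\setminus U_i$ are disjoint closed subsets of the compact zero-dimensional space $\partial^{N_{i+1}}_M X$, hence separated by a clopen $U_{i+1}\subseteq\partial^{N_{i+1}}_M X$ with $U_{i+1}\cap\partial^{N_i}_M X=U_i$ and $q\notin U_{i+1}$. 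The $U_i$ are then increasing, and $U=\bigcup_{i\ge k}U_i$ meets each $\partial^{N_i}_M X$ in a clopen set, so $U$ and its complement are open in $\partial_M X$; thus $U$ is the desired clopen separator.

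I expect the \emph{main obstacle} to be the bookkeeping that the countable chain $(\partial^{N_i}_M X)_i$ genuinely computes the direct limit topology on $\partial_M X$ --- equivalently, that every compact subset of $\partial_M X$ sits inside a single stratum and that the inclusion of an arbitrary stratum into a cofinal one is continuous --- since this is precisely where $\sigma$-compactness is used, and it is what prevents simply running the clopen-extension argument over the full, uncountable directed system of Morse strata (limit stages would break the inductive construction of $U$). Everything else reduces to routine point-set topology of compact zero-dimensional spaces.
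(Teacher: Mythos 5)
Your proposal is correct and follows essentially the same route as the paper: the easy direction via continuity of the inclusion of each stratum, and the converse via Lemma \ref{lem:increasing_gauge} followed by an inductive extension of clopen separators $U_i\subseteq\partial^{N_i}_M X$ using compactness, zero-dimensionality, and the fact that each stratum is closed in the next. The point you flag as the main obstacle (that the countable chain computes the direct limit topology) is exactly the role $\sigma$-compactness plays in the paper's argument as well.
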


\begin{proof}
 If $\partial_M X$ is totally disconnected then any given stratum, $\partial^{N}_M X$ is totally disconnected as well since the stratum is topologically embedded in $\partial_M X$ by Lemma \ref{lem:top embedding}.
  
 Suppose that $\partial^{N}_M X$ is totally disconnected for every Morse gauge $N$. By Lemma \ref{lem:increasing_gauge}, there exists a sequence of Morse gauges $N_1,N_2,\dots$ with $N_i\leq N_{i+1}$ so that $\partial_M G=\bigcup \partial^{N_i}_M X$.
 
 We will prove that for each pair of distinct points $x,y\in \partial_M X$ there exists a clopen subset $A\subseteq \partial_M X$ so that $x\in A$ and $y\notin A$, which suffices to show that $\partial_M X$ is totally disconnected. We can assume $x,y\in\partial^{N_1}_M X$. In fact, we will construct a sequence of clopen subsets $A_i\subseteq \partial^{N_i}_M X$ so that $x\in A_i$, $y\notin A_i$, and $A_i\cap \partial^{N_j}_M X=A_j$ for each $j<i$; we can then just set $A=\bigcup A_i$. Also, we can replace the last condition simply by $A_i\cap \partial^{N_{i-1}}_M X=A_{i-1}$ (for $i\geq 2$), and the stronger condition with $j<i$ will follow by a simple inductive argument.
 
 Let us start with the remark that, for each $i$, since $\partial^{N_i}_M X$ is a totally disconnected compact metrizable space, we have that each point has a neighborhood basis of clopen subsets. In particular, there exists a clopen set $A_1\subseteq \partial^{N_1}_M X$ with $x\in A_1$ and $y\notin A_1$ (since $\partial^{N_1}_M X$ is Hausdorff).
 
 Suppose that we constructed $A_{i-1}$ with the required properties, for some $i\geq 2$. Set $B_{i-1}=\partial^{N_{i-1}}_M X-A_{i-1}$, and notice that it is also clopen in $\partial^{N_{i-1}}_M X$. Both $A_{i-1}$ and $B_{i-1}$ are closed in $\partial^{N_i}_M X$, since they are compact (because they are closed in the compact space $\partial^{N_{i-1}}_M X$), and $\partial^{N_i}_M X$ is Hausdorff. Since $\partial^{N_i}_M X$ is normal, there exists an open set $U\subseteq \partial^{N_i}_M X$ containing $A_{i-1}$, but not intersecting $B_{i-1}$. Since every point of $\partial^{N_i}_M X$ has a basis of clopen neighborhoods, a straightforward compactness argument gives that $A_{i-1}$ is contained in a finite union of clopen sets of $\partial^{N_i}_M X$, each contained in $U$. The union $A_i$ of said clopen sets has the required properties, and we are done.
\end{proof}

 \subsection{Artin groups}
We now show that $\omega$-Cantor spaces arise naturally as Morse boundaries of right-angled Artin groups.  Let $\AG$ denote the right-angled Artin group with finite defining graph $\Gamma$. 

\begin{varthm}[Theorem \ref{thm:Artin_groups}]
 Let $A_\Gamma$ be a right-angled Artin group, for $\Gamma$ a finite graph. Then $\partial_M A_\Gamma$ satisfies exactly one of the following:
 \begin{enumerate}
  \item $\partial_M A_\Gamma$ is empty,
  \item $\partial_M A_\Gamma$ consists of two points,
  \item $\partial_M A_\Gamma$ is a Cantor space, or
  \item $\partial_M A_\Gamma$ is an $\omega$-Cantor space.
 \end{enumerate}
\end{varthm}

\begin{proof}  If $\AG$ is a direct product ($\Leftrightarrow \Gamma$ is a join), then every geodesic is contained in a flat, so the Morse boundary of $\AG$ is empty.  If $\AG$ is a free group ($\Leftrightarrow \Gamma$ is discrete) then the Morse boundary is equal to the Gromov boundary which is either a Cantor space (if $\AG$ is non-abelian) or just two points (if $\AG=\Z$).  

In all other cases, it is shown in \cite{Koberda-Mangahas-Taylor} that $\AG$ contains a stable non-abelian free group and thus has a Cantor space as a subspace of the Morse boundary. Moreover, $\partial_M \AG$ is $\sigma$-compact since $\AG$ is CAT(0) and thus its Morse boundary is equivalent to the contracting boundary defined in \cite{ charney-sultan}. The boundary of $\partial^{N}_M {\AG}$ is totally disconnected for all Morse gauges $N$ by Theorem 5.1 in \cite{cordes-hume} and thus by Proposition \ref{prop:totally disconnected} the whole Morse boundary is disconnected. 

Applying Theorem \ref{thm:boundary_is_Cantor} gives the desired result.
\end{proof}

\subsection{Graphs of groups} 
\label{subsec:graph}
We now show that a certain class of graphs of groups has $\omega$-Cantor space boundaries. We apply this to show that the Morse boundary of non-geometric graph manifolds are $\omega$-Cantor spaces.

\begin{varthm}[Theorem \ref{thm:Bass--Serre_is_Cantor}]
Let $G$ be a finitely generated group with a graph of groups decomposition $\mathcal{G}$ such that the vertex groups are finitely generated and undistorted in $G$ and have empty Morse boundary. Assume further that $G$ acts acylindrically on the Bass--Serre tree associated to the decomposition. Then $\partial_M G$ is a Cantor space or an $\omega$-Cantor space. It is a Cantor space if and only if  $G$ is hyperbolic, in which case $G$ is virtually free.
\end{varthm}

\begin{proof}
We will apply Theorem \ref{thm:boundary_is_Cantor}, so we need only show that $\partial_M G$ is totally disconnected, $\sigma$-compact, and contains a Cantor subspace. 

Let $\mathcal{G} = (\Gamma, \{G_v\},\{G_e\})$ be the graph of groups decomposition. Choose a maximal tree in $\Gamma$.   Choose a generating set for $G$ consisting of a finite generating set for each vertex group $G_v$ together with a generator $t_e$ for each edge $e$ of $\Gamma$ not contained in the maximal tree.  For edges $e$ in the maximal tree, set $t_e=1$.  Then for any edge $e$ and $h \in G_e$,  we have $h_0 t_e = t_e h_1$ where $h_0, h_1$ are the images of $h$ in the initial and terminal vertex groups of $e$, respectively.  

It will be convenient to work with a certain quasi-isometric model $X$ for $G$.  To construct $X$, 
consider the Bass-Serre tree $T$ whose vertices correspond to cosets of the vertex groups.  For each vertex $v$ in the Bass-Serre tree, take a copy $X_v$ of the Cayley graph of the corresponding vertex group, but with vertices labelled by elements of the coset.  For an edge labelled $e$ in the Bass-Serre tree connecting $v$ to $w$, attach edges between vertices of the form  $gh_0 \in X_v$ and $gh_0t_e = g t_e h_1 \in X_w$ for each $h$ in $G_e$.  This defines the graph $X$.  

Note that $X_v$ is quasi-isometrically embedded in $X$ since we are assuming that vertex groups are undistorted, and the constants are uniform in $v$ since there are only finitely many $G$-orbits of spaces $X_v$.
Moreover, there is a $1$--Lipschitz map $\pi:X\to T$ that maps $X_v$ to $v$, and the preimage of the open star of a vertex $v$ is contained in the $1$--neighborhood of $X_v$. 

We next show that  if $Y \subset X$ is a stable subset, then $Y$ quasi-isometrically embeds in the Bass--Serre tree $T$ via the projection map $\pi \colon X \to T$.  From this it will follow that the map on the boundaries induced by $\pi$, $\partial_M \pi \colon \partial_M X \to \partial T$, is continuous and injective.  

Let $N$ be such that every pair of points of $Y$ can be connected by an $N$-Morse geodesic segment $\gamma$ in $X$. For any such $\gamma$, since $\partial_M X_v$ is empty and $X_v$ is uniformly quasi-isometrically embedded in $X$, we have that $\mathrm{diam}(X_v \cap \gamma)$ is bounded by some constant $D\geq 1$ depending on $N$.  We increase $D$ so that a similar bound also holds when intersecting with the $1$-neighborhood of $X_v$.   We claim that $\pi \circ \gamma$ is a $(D,D)$-quasi-geodesic, which is readily seen to be enough to show that $Y$ quasi-isometrically embeds in $T$.  Since this might be of independent interest, we note that a small variation of the arguments below also proves the following more general lemma:

\begin{lemma}
 Let $G$ be a finitely generated group with a graph of groups decomposition $\mathcal{G}$, and fix a word metric on $G$ and an orbit map $\pi\colon G\to T$, where $T$ is the Bass--Serre tree. Then for each $D_1,D_2\geq 1$ there exists $K\geq 1$ such that the following holds. Let $\gamma$ be a $(D_1,D_1)$--quasi-geodesic in $G$ that intersects each coset of a vertex group in a subset of diameter bounded by $D_2$. Then $\pi\circ \gamma$ is a $(K,K)$--quasi-isometric embedding. 
\end{lemma}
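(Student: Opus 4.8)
The plan is to prove the two quasi-isometry inequalities separately. The upper bound is routine: the orbit map $\pi$ is Lipschitz, say $L$--Lipschitz, so $d_T(\pi\gamma(u),\pi\gamma(u'))\le L\,d_G(\gamma(u),\gamma(u'))\le LD_1|u-u'|+LD_1$, which is the upper half of the required estimate. All the work is in the lower bound $|u-u'|\le K\,d_T(\pi\gamma(u),\pi\gamma(u'))+K$: one must show that $\pi\circ\gamma$ cannot traverse a long parameter-interval without getting far in $T$.

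For this I would pass to the coned-off Cayley graph $\widehat G$ of $G$, obtained from $\Cay(G)$ by attaching, for each coset of a vertex group (equivalently, each vertex of $T$), a cone vertex joined to every element of that coset. Assuming as usual that the underlying graph is finite, Bass--Serre theory shows that the orbit map extends to a $G$--equivariant quasi-isometry $\widehat G\to T$ --- this is just the statement that $T$ is $G$ coned off along its vertex-group cosets --- so it suffices to prove that $\gamma$ is a $(K',K')$--quasi-geodesic in $\widehat G$ for some $K'$ depending only on $D_1,D_2$ and the graph of groups. The upper bound there is automatic, since coning off only decreases distances. The key consequence of the hypotheses for the lower bound is that, combining the $D_2$--bound with the $(D_1,D_1)$--quasi-geodesic condition, the \emph{entire} preimage $\gamma^{-1}(c)$ of any vertex-group coset $c$ has parameter-diameter at most $E:=D_1(D_2+D_1)$: $\gamma$ visits each vertex coset during a single bounded window of parameters. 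From this one extracts parameters $u=r_0<r_1<\cdots<r_k\le u'$ recording the successive ``first exits'' of $\gamma$ from the most recently visited vertex coset; then $r_{i+1}-r_i\le E+1$, the cosets visited at the $r_i$ are pairwise distinct, and $|u-u'|\le (k+1)(E+1)$. So it remains to bound $k$ linearly in $d_T(\pi\gamma(u),\pi\gamma(u'))$, i.e.\ to show $\pi\circ\gamma$ is coarsely monotone along the geodesic $[\pi\gamma(u),\pi\gamma(u')]$: once $\pi\circ\gamma$ has crossed an edge $e$ of $T$ it can return past $e$ only within a bounded parameter-window, because crossing $e$ forces $\gamma$ into the $\Cay(G)$--neighbourhood of the edge-group coset $\operatorname{Stab}(e)$, which lies inside the two adjacent vertex cosets, so that the window-bound above confines all crossing times of $e$.

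The step I expect to be the main obstacle is precisely this last one: controlling the behaviour of $\gamma$ near a vertex, or near an edge-group coset, when $T$ is \emph{not} locally finite, so that a ball of bounded radius in $T$ may contain infinitely many vertex cosets and one cannot argue by counting. The way around it is to work with the ``walls'' of $T$ (its edges, dual to edge-group cosets) rather than its vertices, to use efficiency of $\gamma$ to force two far-apart-in-parameter visits near a given wall to be genuinely $\Cay(G)$--far, and then to contradict the uniform window-bound; the uniformity of the constants comes from the action on $T$ having finitely many orbits of edges. This is essentially the same delicate point needed to see that $\pi\circ\gamma$ is a quasi-geodesic for the $N$--Morse geodesics $\gamma$ used in the proof above, and it is why the statement is isolated as a separate lemma.
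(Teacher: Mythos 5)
Your overall strategy (reduce to the coned-off graph, show $\gamma$ visits only boundedly many parameter-units per vertex coset, then bound the number of distinct cosets visited by coarse monotonicity in $T$) is a legitimate reorganization of the problem; the paper instead argues directly, by taking for each vertex $v_i$ on the $T$-geodesic between the projected endpoints the subpath of $\gamma$ from its first entrance to its last exit of the $1$-neighborhood of the coset $X_{v_i}$, showing via tree separation that these subpaths cover all of $\gamma$ (any excursion off the geodesic is trapped between two visits to a neighborhood of a separating vertex), and summing their lengths. But the step you flag as the main obstacle is a genuine gap, and your proposed resolution does not close it. Crossing a wall of $T$ only forces $\gamma$ into a bounded \emph{neighborhood} of the corresponding edge-group coset, not into that coset or the adjacent vertex cosets themselves, whereas your window bound $E$ controls only the times at which $\gamma$ actually \emph{lies in} a vertex coset. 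Observing that two parameter-far visits near a wall are genuinely far in $\operatorname{Cay}(G)$ contradicts nothing: vertex and edge cosets can be arbitrarily large, so $\gamma$ can shadow a coset at distance $1$ for an arbitrarily long time while never entering it, and then no window bound is available.

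In fact the statement is false as written, so no argument can succeed without strengthening the hypothesis. Take $G=F_2\times\mathbb Z=\langle a,t\rangle *_{\langle t\rangle}\langle b,t\rangle$ with the word metric from the finite generating set $\{a,b,t,s\}$, where $s=b^{-1}ab$. Then $\gamma(n)=bs^{n}=a^{n}b$ is a geodesic (the exponent sum of $a$ and $s$ shows $|s^{n}|=n$), it meets every coset of $\langle a,t\rangle$ and of $\langle b,t\rangle$ in at most one point, yet all the vertices $a^{n}b\langle a,t\rangle$ lie within distance $2$ of $\langle a,t\rangle$ in $T$, so $\pi\circ\gamma$ has image of diameter $4$. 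What is actually needed --- and what the paper's in-line argument uses, by explicitly enlarging the constant $D$ so that the diameter bound holds for intersections of $\gamma$ with the $1$-neighborhoods of the vertex spaces $X_v$ --- is a bound on the diameter of the intersection of $\gamma$ with a uniform neighborhood of each vertex coset. With that hypothesis your wall-confinement step does work (all crossing times of a given wall land in a single window of controlled length, since the crossings occur in the neighborhood of the two adjacent vertex cosets), and the rest of your outline can be completed; in the intended application the neighborhood bound is derived from the Morse property of $\gamma$ together with the vertex groups being undistorted with empty Morse boundary, and you should say where it comes from in whatever setting you intend.
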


Let $\gamma \colon [a,b] \to X$ be an $N$-Morse geodesic segment  and let $p_v$ be the point where $\gamma$ first enters the $1$-neighborhood of $X_v$ and let $q_v$ be the point where it last leaves said neighborhood. Call this subpath $\gamma_v$. Let $\{v_i\}_{i=1}^n \subset T$ be the collection of vertices on the geodesic from $\pi(\gamma(a))$ to $\pi(\gamma(b))$, so that $d_T(\pi(\gamma(a)),\pi(\gamma(b)))\geq n-1$.
We now argue that the subpaths $\gamma_{v_i}$ must cover $\gamma$. Indeed, fix any $t\in[a,b]$. If $\gamma(t)$ projects to a vertex of the geodesic from $\pi(\gamma(a))$ to $\pi(\gamma(b))$ then this is clear. If $\gamma(t)$ projects to the interior of an edge of said geodesic, then it is still true that $\gamma(t)$ belongs to $\gamma_{v_i}$ for $v_i$ any endpoint of the edge since $\gamma(t)$ belongs to the $1$-neighborhood of $X_{v_i}$ (recall that the $\pi$-preimage of the open star around $v$ is contained in the $1$-neighborhood of $X_v$).

The last remaining case is the one where $\gamma(t)$ does not project to the geodesic from $\pi(\gamma(a))$ to $\pi(\gamma(b))$. Then there is a vertex $v_i$ separating $\pi(\gamma(t))$ from both $\pi(\gamma(a))$ and $\pi(\gamma(b))$. Hence, $\pi\circ\gamma$ has to pass through $v_i$ before and after time $t$, showing that $\gamma(t)$ lies on $\gamma_{v_i}$. In view of this covering, we see 
\[
|b-a| \leq \sum_{i=1}^n d(p_{v_i}, q_{v_i}) \leq D n\leq D(d_T(\pi(\gamma(a)),\pi(\gamma(b)))+1).
\]
Since $\pi$ is 1-Lipschitz, we also have
$d_T(\pi(\gamma(a)),\pi(\gamma(b))) \leq d(\gamma(a),\gamma(b)) = |b-a|$,
so $\pi \circ \gamma$ is $(D,D)$-quasi-geodesic as claimed. 

It follows from this that the map $\partial_M \pi \colon \partial_M X \to \partial T$ is continuous and injective. Since the Gromov boundary of a tree is totally disconnected, we conclude that $\partial_M X$ is also totally disconnected.  To complete the proof of the theorem, it remains to show that $\partial_M X$ is $\sigma$-compact and has a Cantor subspace. 

We first show that $\partial_M X$ has a Cantor subspace. Since $G$ acts acylindrically on a tree, we know by Theorem 6.14 of  \cite{Dahmani-Guirardel-Osin} that we have a hyperbolically embedded free group. By \cite{Sisto} this free subgroup is quasi-convex and thus is stable. We have thus found our stable Cantor subspace.

To see that $\partial_M X$ is $\sigma$-compact we stratify the boundary by looking at collections of geodesics that intersect with the vertex spaces in a bounded amount: let $D \geq 0$ and consider the $x \in \partial_M X$ so that there exists a representative $\gamma$ with the the property that $\mathrm{diam}(X_v \cap \gamma) \leq D$ for all $v$. Call this subset $\partial_D X$. As we noted earlier, any $\partial_M^N X$ is contained in some $\partial_D X$, where $D$ depends on $N$.

We claim that the reverse is also true, that is, $\partial_D X$ is contained in $\partial_M^{N'} X$ for some $N'$. 
This will prove that $\partial_M X$ is $\sigma$-compact since the closures of the $\partial_D X$ provide an exhaustion of $\partial_M X$ by compact sets.  We believe that this can be deduced from the arguments in the proof of \cite[Theorem 4.1]{DMS}, but we will give an alternate proof here.  Again, since this might be of independent interest, we note that similar arguments show more generally:

\begin{lemma}
 Let $G$ be a finitely generated group acting acylindrically on a hyperbolic space $Z$, and fix a word metric on $G$ and an orbit map $\pi:G\to Z$. Then for every $D_1,D_2\geq 1$ there exists a Morse gauge $N$ so that the following holds. If $\gamma$ is a $(D_1,D_1)$--quasi-geodesic in $G$ such that $\pi\circ\gamma$ is a $(D_2,D_2)$--quasi-geodesic in $Z$, then $\gamma$ is $N$--Morse.
\end{lemma}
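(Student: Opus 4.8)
The plan is to verify the Morse property of $\gamma$ directly. Fix a hyperbolicity constant $\delta$ for $Z$ and note that the orbit map $\pi$ is coarsely Lipschitz: there is $C\geq 1$ (coming from the chosen finite generating set) with $d_Z(\pi g,\pi h)\leq C\,d_G(g,h)$ for all $g,h\in G$. Given $\lambda,\epsilon\geq 1$ and a $(\lambda,\epsilon)$--quasi-geodesic $q$ in $G$ with endpoints $\gamma(a),\gamma(b)$, I want to bound the Hausdorff distance between $q$ and $\gamma|_{[a,b]}$ by a constant $N=N(\lambda,\epsilon)$, depending also on $D_1,D_2,C,\delta$ and on the acylindricity function of the action but \emph{not} on $q$; the required Morse gauge is then obtained from these bounds. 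After the usual taming we may assume $q$ is an edge path, so its $G$--length is at most $\lambda\,d_G(\gamma(a),\gamma(b))+\lambda\epsilon$, and hence $\pi\circ q$ is a path in $Z$ from $\pi\gamma(a)$ to $\pi\gamma(b)$ of $Z$--length at most $C\lambda\,d_G(\gamma(a),\gamma(b))+C\lambda\epsilon$.

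The first observation is that $\gamma$ makes \emph{definite progress} in $Z$: combining the quasi-geodesic inequalities for $\gamma$ in $G$ and for $\pi\circ\gamma$ in $Z$ with the coarse Lipschitz bound, one gets that $d_G(\gamma(s),\gamma(t))$ and $d_Z(\pi\gamma(s),\pi\gamma(t))$ are comparable, up to the multiplicative constant $D_1D_2$ (and $C$) and additive constants, for all $s,t$. In particular the $Z$--length bound on $\pi\circ q$ above is at most linear in $d_Z(\pi\gamma(a),\pi\gamma(b))$, which forces $\pi\circ q$ to stay reasonably close to $\pi\circ\gamma$ in the hyperbolic space $Z$. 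However --- and this is precisely the place where the hyperbolicity of $Z$ alone is \emph{not} enough --- such an ``efficient'' path can still stray from a geodesic with its endpoints by an amount growing with $d_Z(\pi\gamma(a),\pi\gamma(b))$, so this input has to be fed into a more careful local argument that also uses acylindricity.

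The core step is to show that $q$ admits no long excursion away from $\gamma$ in $G$: there is a constant $R$, depending only on the data above, with $q$ contained in the $R$--neighborhood $\mathcal N_R(\gamma)$. I would argue by contradiction. Let $q|_{[s_1,s_2]}$ be a maximal subpath lying outside $\mathcal N_R(\gamma)$; since the endpoints of $q$ lie on $\gamma$, both $q(s_1)$ and $q(s_2)$ sit at distance essentially $R$ from $\gamma$, with nearest points $\gamma(t_1),\gamma(t_2)$. If $d_G(\gamma(t_1),\gamma(t_2))$ is bounded by a constant fixed in terms of the other data, then $q(s_1)$ and $q(s_2)$ are uniformly close in $G$, so the quasi-geodesic arc $q|_{[s_1,s_2]}$ has uniformly bounded $G$--length and therefore stays uniformly close to $\gamma$ --- contradicting the choice of $R$ large. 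If instead $d_G(\gamma(t_1),\gamma(t_2))$ is large, then by definite progress $\pi\gamma(t_1)$ and $\pi\gamma(t_2)$ are far apart in $Z$, and $\pi\circ q|_{[s_1,s_2]}$, being an efficient path with endpoints within $CR$ of $\pi\gamma(t_1),\pi\gamma(t_2)$, must shadow $\pi\circ\gamma|_{[t_1,t_2]}$ over a long stretch in $Z$. Along that stretch one can, for each coarse ``time'', write down a group element carrying a bounded sub-configuration of $\gamma$ to the corresponding point of $q$; if $q|_{[s_1,s_2]}$ stayed at $G$--distance more than $R$ from $\gamma$ throughout, these elements would produce more than a bounded number of distinct elements each moving two $Z$--points that are far apart (the images under $\pi$ of $\gamma(t_1)$ and $\gamma(t_2)$, say) only a bounded amount, violating the acylindricity bound once $R$ --- hence the length of the shadowed stretch --- exceeds the relevant acylindricity threshold. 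I expect this step to be the main obstacle. It is a direct analogue of the ``covering'' computation carried out above for the Bass--Serre tree, now run for the competitor $q$ and using acylindricity in place of the bound on intersections with vertex spaces, and it is in the spirit of \cite[Theorem 4.1]{DMS}. (An alternative would be to first replace $Z$ by a quasi-tree carrying an acylindrical $G$--action for which $\pi\circ\gamma$ remains a quasi-geodesic, after which the tree computation applies almost verbatim.)

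Once $q\subseteq\mathcal N_R(\gamma)$, the reverse inclusion is routine: nearest-point projection $P\colon\mathcal N_R(\gamma)\to\gamma$ is coarsely Lipschitz with constant $O(R)$, so $P\circ q$ is a coarse path in $\gamma$ from near $\gamma(a)$ to near $\gamma(b)$ with bounded jumps; since $\gamma$ is a $(D_1,D_1)$--quasi-geodesic, $P\circ q$ must come within $O(R)$ of every $\gamma(c)$ with $a\leq c\leq b$, whence $d_G(\gamma(c),q)=O(R)$. Taking $N(\lambda,\epsilon)$ to be the resulting bound, and noting that all constants introduced depend only on $\lambda,\epsilon$, $D_1$, $D_2$, the generating set, and the acylindricity function, gives the Morse gauge $N$ asserted in the statement.
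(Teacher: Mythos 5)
Your overall strategy coincides with the paper's: reduce the Morse property to the claim that a competitor quasi-geodesic $q$ with endpoints on $\gamma$ makes no long excursion outside a fixed neighborhood of $\gamma$, and rule out long excursions by playing the upper bound on excursion length (linear in $d_Z$ of the endpoints, coming from $q$ being a quasi-geodesic in $G$ and from $\gamma$ making definite progress in $Z$) against a lower bound that must come from acylindricity. The paper obtains that lower bound from one packaged statement, Lemma 10.4 of \cite{MathieuSisto}: any path meeting the $t$-neighborhood of $\gamma$ only at its endpoints satisfies $\max\{l(\alpha'),l(\gamma')\}\geq (d_Z(\bar a,\bar b)-2t-C)\rho(t)$ with $\rho$ divergent, so choosing $t$ with $\rho(t)$ larger than all the linear constants gives an immediate contradiction, exactly as in the Bass--Serre computation preceding the lemma.

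The gap is in your substitute for that divergence estimate. Hyperbolicity of $Z$ only forces the efficient path $\pi\circ q|_{[s_1,s_2]}$ to come within $O(\delta\log(\cdot))$ of each point of $\pi\circ\gamma|_{[t_1,t_2]}$, not within a uniform distance, so the ``shadowing'' you invoke degrades with length; and the acylindricity count is not actually set up: you never exhibit the required configuration of more than $N_\epsilon$ distinct group elements each displacing \emph{two} $Z$-points at definite distance from one another by a uniformly bounded amount (the natural candidates $\gamma(t)^{-1}q(s)$ control at most one orbit point, and only up to the logarithmic error). Carrying this out amounts to re-proving the divergence lemma, which is the real content of the step; as written it is an intention rather than an argument, and the parenthetical fallback of passing to a quasi-tree is likewise unjustified (it is not automatic that $\pi\circ\gamma$ remains a quasi-geodesic, nor that acylindricity transfers). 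A minor separate point: in your first case (nearby projections) there is no contradiction to derive --- you simply conclude the excursion has length bounded in terms of $R$, which is the desired conclusion and is how the paper phrases it.
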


To see that $\partial_D X$ is contained in $\partial_M^{N'} X$ for some $N'$, let $\gamma$ be a representative of a point in $\partial_D X$, and let $\alpha$ be an $(L,L)$-quasi-geodesic in $X$ with endpoints on $\gamma$.  We claim that for a sufficiently large choice of $t$, there exists $R$, depending only on $D,L,t$ such that any subsegment of $\alpha$ lying outside the $t$-neighborhood of $\gamma$ has length at most $R$.  It will then follow that $\alpha$ lies in the $(t + R)$-neighborhood of $\gamma$.  

To show this, we will apply  Lemma 10.4 of \cite{MathieuSisto}.  
Up to replacing $\alpha$ with piecewise geodesic path lying within Hausdorff distance $2L$ from it, and replacing $L$ by $4L$, we can assume in addition that $\alpha$ is $L$--Lipschitz, a requirement for applying this lemma.  (This follows from the ``taming" process described in \cite[Lemma III.H.11]{BridsonHaefliger}).  

Say $\alpha'$ is a segment of $\alpha$ whose endpoints $a_1, b_1$ are distance $t$ from $\gamma$ and whose interior lies outside the $t$-neighborhood of $\gamma$.  Let $a_2,b_2$ be points on $\gamma$ at distance $t$ from $a_1,b_1$ and let $\gamma'$ be the segment of $\gamma$ from $a_2$ to $b_2$.  
Let $\bar a = \pi(a_2)$ and $\bar b=\pi(b_2)$ denote the endpoints of $\pi \circ \gamma'$. 
The Lemma states that there exists a divergent function $\rho$ and a constant $C$ such that 
$$(*) \hskip .2in \max\{l(\alpha'), l(\gamma')\} \geq  (d_T(\bar a,\bar b) - 2t -C) \rho(t). $$

Choose $t$ sufficiently large so that $t > C$ and $\rho(t) > 10DL$.  Set $R = 10DLt$ and suppose $\l(\alpha') > R$.
 (Without loss of generality, we may assume that all constants appearing in this argument are $\geq 1$.)
Since $\gamma$ lies in $\partial_D X$, $\gamma'$ intersects any $X_v$ in a segment of diameter at most $D$, so the argument above shows that
$$l(\gamma') = d_X(a_2,b_2) \leq D (d_T(\bar a,\bar b))+ 1). $$
And since $\alpha'$ is an $(L,L)$-quasi-geodesic, 
\begin{align*}
(**) \hskip .2in l(\alpha') &\leq L d_X(a_1,b_1) + L \\
& \leq L d_X(a_2,b_2)  + 2Lt +L \\
&\leq  DL d_T(\bar a,\bar b)+ DL +2Lt +L \\
& \leq DL(d_T(\bar a,\bar b) + 2t+2))
\end{align*}
We thus have
$10DLt < l(\alpha') \leq DL(d_T(\bar a,\bar b) + 2t+2))$,
from which we deduce that $d_T(\bar a,\bar b) > 8t-2 > 6t$, and hence
\begin{enumerate}
\item $d_T(\bar a, \bar b) + 2t +2 < d_T(\bar a, \bar b) +3t < 2 d_T(\bar a, \bar b)$, and
\item $d_T(\bar a, \bar b) -2t-C > d_T(\bar a, \bar b) -3t >  d_T(\bar a, \bar b)/2$.
\end{enumerate}
Applying (1) to the inequality $(**)$ gives 
$$l(\alpha')  < 2DL\, d_T(\bar a,\bar b) $$
Applying (2) to the inequality $(*)$ gives
$$\max\{l(\alpha'), l(\gamma')\} > 5DL \, d_T(\bar a,\bar b). $$
This is a contradiction, so we conclude that no such segment $\alpha'$ exists. 
This completes the proof of the theorem.
\end{proof}

\begin{varthm}[Corollary \ref{thm:graph_mflds}]
Let $M$ be a non-geometric graph manifold. Then $\partial_M \left(\pi_1(M)\right)$ is an $\omega$-Cantor space.
\end{varthm}

\begin{proof}
 Let $\mathcal{G}$ be the graph of groups decomposition of $\pi_1(M)$ associated to the geometric decomposition of $M$ along embedded tori and Klein bottles into finitely many Seifert manifolds. The vertices of the Bass--Serre tree associated to $\mathcal{G}$ correspond to the Seifert manifolds, which have empty Morse boundary and are undistorted in $\pi_1(M)$. Since $\pi_1(M)$ acts acylindrically on the Bass--Serre tree \cite[Theorem 7.27]{Abbott-Balasubramanya-Osin}, we can apply the previous theorem. 
\end{proof}

\section{Limits of \sier curves}

We now begin our study of limits of \sier curves. First, we state the characterization of the \sier curve that will be most useful for our purposes. We then define entwined \sier curves, and  study direct  limits of entwined \sier curves.

A \sier curve $S$ is a topological space homeomorphic to $\Sph-\bigcup_{i \in \N} {D_i}$, where $D_i$ is the interior of a closed disk in $\Sph$ satisfying
\begin{itemize}
\item $\overline{D}_i \cap \overline{D}_j = \emptyset$  all $i,j$.
\item $diam(D_i) \to 0$ as $i \to \infty$
\item $\bigcup \overline{D}_i$ is dense in $\Sph$. 
\end{itemize}
All \sier curves are homeomorphic to each other \cite{Whyburn}. The peripheral circles of a \sier curve are the boundaries of the $D_i$ as above, and they can be characterised as the subspaces $\gamma$ of $S$ homeomorphic to $S^1$ so that $S-\gamma$ is connected (this follows from the Jordan curve theorem, see the argument in \cite[page 256, case (ii)]{K:homeos_Sierpinski}).

\begin{definition}
 Let $S\subseteq T$ be \sier curves. We say that $S$ is \emph{entwined} in $T$ if no peripheral circle of $S$ intersects any peripheral circle of $T$.
\end{definition}

The following lemma says the following. Suppose that $S\subseteq T$ are \sier curves, with $S$ entwined in $T$. Then $T$ is obtained from $S$ by attaching \sier curves onto each peripheral circle.

\begin{lemma}\label{lem:Sierpinski_complement}
 Let $S\subseteq T$ be \sier curves, with $S$ entwined in $T$. Then there exist \sier curves $S_i\subseteq T$ with the following properties:
 \begin{enumerate}
  \item $T=S\cup \bigcup S_i$,
  \item $S_i\cap S$ is a peripheral circle of $S$ and $S_i$, and any peripheral circle of $S$ arises in this way,
  \item $S_i\cap S_j=\emptyset$ if $i\neq j$,
  \item in any metric on $T$ compatible with its topology, we have $diam(S_i)\to 0$.
 \end{enumerate}
\end{lemma}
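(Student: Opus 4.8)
The plan is to realize $S$ and $T$ concretely as complements of disks in $\Sph$ and then decompose $\overline{T} \setminus S$ into the pieces hanging off each peripheral circle of $S$. First I would fix a homeomorphic model in which $S = \Sph - \bigcup_j D_j$ for an admissible family of open disks $D_j$ (disjoint closures, diameters $\to 0$, union dense), and similarly $T = \Sph' - \bigcup_k E_k$; but since $S \subseteq T$, it is more convenient to work inside a single sphere: realize the pair so that $T \subseteq \Sph$ is a \sier curve with defining disks $E_k$, and $S \subseteq T$ is a subcontinuum. The hypothesis that $S$ is entwined in $T$ means that each peripheral circle $\gamma$ of $S$ is a Jordan curve in $\Sph$ disjoint from every peripheral circle of $T$; each $\gamma$ bounds two closed disks in $\Sph$, exactly one of which, call it $\Delta_\gamma$, is "filled in'' relative to $S$ (i.e. $\Delta_\gamma \cap S = \gamma$, using that $S - \gamma$ is connected and lies entirely on the other side). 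Define $S_\gamma = T \cap \Delta_\gamma$ for each peripheral circle $\gamma$ of $S$, and index these as $S_i$.

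The key steps, in order, are: (1) verify $T = S \cup \bigcup_i S_i$. A point of $T - S$ lies in some complementary component of $S$ in $\Sph$; since $S$ is a \sier curve, the closure of that component is one of the filled disks $\Delta_\gamma$, so the point lies in $T \cap \Delta_\gamma = S_\gamma$. (2) Verify $S_i \cap S = \gamma_i$ is a peripheral circle of both $S$ and $S_i$: the equality $S_\gamma \cap S = \gamma$ is immediate from $\Delta_\gamma \cap S = \gamma$; that $\gamma$ is peripheral in $S_\gamma$ follows because $S_\gamma - \gamma = T \cap (\Delta_\gamma - \gamma)$ is the intersection of $T$ with an open disk and is connected (one checks connectedness using that $T$ is locally connected and that $\Delta_\gamma - \gamma$ meets $T$ in a set whose closure is $S_\gamma$), and conversely every peripheral circle of $S$ is some $\gamma$ by definition. (3) Disjointness $S_i \cap S_j = \emptyset$ for $i \ne j$: distinct peripheral circles of $S$ bound disjoint filled disks $\Delta_{\gamma_i}, \Delta_{\gamma_j}$ (if $\Delta_{\gamma_i} \subseteq \Delta_{\gamma_j}$ then $\gamma_i$ would lie in a complementary component of $S$, impossible; if disjoint interiors but not nested, their only possible intersection is along the boundary circles, which are disjoint by the characterization of distinct peripheral circles and entwining). (4) $\diam(S_i) \to 0$: since $\diam(S_i) \le \diam(\Delta_{\gamma_i})$ and the $\Delta_{\gamma_i}$ are among the complementary disks of the \sier curve $S$ in $\Sph$ (after transporting the fixed metric on $T$ to a compatible metric — or just using compactness of $T$ and the fact that only finitely many complementary domains of a \sier curve have diameter exceeding any given $\epsilon$). (5) Each $S_i$ is itself a \sier curve: it is a Peano continuum (closed subset of $T$, locally connected, no cut points — the latter since removing its peripheral circle leaves it connected), it has empty interior in $\Sph$ because it is contained in $\overline{T}$ which does, and its complementary domains have disjoint closures with dense union and diameters $\to 0$; by Whyburn's characterization \cite{Whyburn} these properties force $S_i$ to be homeomorphic to a \sier curve.

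The main obstacle I expect is step (5), and secondarily the connectedness claims in step (2): showing $S_i = T \cap \Delta_{\gamma_i}$ satisfies \emph{all} of Whyburn's axioms, in particular local connectedness and the absence of local cut points, and that its family of complementary Jordan domains is admissible. This requires care because $S_i$ is cut out of $T$ by a Jordan curve that is only assumed disjoint from the peripheral circles of $T$, not transverse to anything; one must argue that intersecting $T$ with a closed Jordan disk whose boundary avoids all peripheral circles does not create local cut points and does not merge or destroy complementary domains in a bad way. The cleanest route is probably to invoke the Whyburn characterization in the form: a Peano continuum in $\Sph$ with empty interior and no local cut points is a \sier curve (cf. \cite{Whyburn}, and the local-cut-point discussion analogous to \cite{K:homeos_Sierpinski}), and then verify those two topological conditions for $T \cap \Delta_{\gamma_i}$ directly, using that $T$ itself has no local cut points and that $\gamma_i$, being a peripheral circle of $T \cap \Delta_{\gamma_i}$, is not a local cut set.
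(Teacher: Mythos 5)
Your construction ($S_\gamma = T\cap\overline{D_\gamma}$, where $D_\gamma$ is the complementary disk of the peripheral circle $\gamma$ that misses $S$) and your verifications of (1)--(4) are essentially identical to the paper's proof. The one point where you over-anticipate difficulty is your step (5): entwinedness means each defining disk $D_i$ of $T$ has closure disjoint from $\gamma$, hence lies entirely inside or outside $D_\gamma$, so $T\cap\overline{D_\gamma}=\Sph-\bigl(D'_\gamma\cup\bigcup_{D_i\subseteq D_\gamma}D_i\bigr)$ is manifestly the complement of an admissible family of disks and is a \sier curve directly by Whyburn's characterization --- no local-cut-point analysis is required. (Minor remark on (4): rather than citing the classical null-sequence theorem for complementary domains, the paper argues by hand that a $\gamma$ of small diameter forces $D_\gamma$ either to be small or to contain a half-sphere, and the latter happens only finitely often by disjointness.)
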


\begin{proof}
 We can assume $T=\Sph-\bigcup {D_i}$, where the $D_i$ are open disks as described above. 
  
 Let $\gamma$ be a peripheral circle of $S$. Then $\Sph-\gamma$ is a union of two open disks $D_\gamma, D'_\gamma$ whose boundary is $\gamma$, by the Jordan curve theorem. 
 One of these two disks, say $D_\gamma$, is disjoint from $S$, for otherwise $S-\gamma$ would be disconnected.   Since $S$ is entwined in $T$,  each $D_i$ is contained in either $D_\gamma$ or $D'_\gamma$.  Thus, $\overline{D_\gamma}\cap T$ is $\Sph$ minus a union of interiors of closed disks, namely, $D'_\gamma$ and the $D_i$ contained in $D_\gamma$.  Notice that the diameters of the disks $D_i$ go to $0$, and that their union is dense in $\Sph$, and hence $\overline{D_\gamma}\cap T$ is a \sier curve, which we will denote by $S_\gamma$. We claim that the \sier curves $\{S_\gamma\}$ have the required properties.
 
 1) We first observe that $\Sph-S=\bigcup D_\gamma$. In fact, the closure of any complementary components of $S$ is a closed disk by \cite[Theorem 9]{Moore} and the Jordan curve theorem, and the boundary of such disk is then a peripheral circle. Thus, $\Sph = S \cup \bigcup \overline{D}_\gamma$ and hence $T = S \cup  \bigcup (\overline{D}_\gamma \cap T) 
 =S \cup \bigcup S_\gamma$.  
 
 2) We have $S_\gamma\cap S=\gamma$ by construction.
 
 3) It suffices to show that for two distinct peripheral circles $\gamma, \alpha$, we have $D_\gamma\cap D_\alpha=\emptyset$. If not, since the boundary circles of $\gamma$ and $\alpha$ are disjoint, up to switching $\gamma$ and $\alpha$, we would have $\overline{D_\gamma}\subseteq D_\alpha$ (given complementary components of $\gamma,\alpha$, they are either disjoint or one of them, as well as its closure, is contained in the other). But then we would have $\gamma\subseteq D_\alpha$, in contradiction with the fact that $D_\alpha\cap S=\emptyset$.
  
 4) This statement does not depend on the choice metric, so for convenience we endow $\Sph$ with the metric $d$ as a subspace of Euclidean space, and in turn endow $S$ and $T$ with their metrics subspaces of $(\Sph,d)$. Since each $S_\gamma$ is contained in the closure of the corresponding $D_\gamma$, it suffices to show that, given any $\epsilon>0$ there are only finitely many $\gamma$ with $diam(D_\gamma)\geq \epsilon$.
 
 Fix $\epsilon>0$, and we can further assume $\epsilon<1$. Notice that there exist only finitely many $\gamma$ of diameter at least $\epsilon/2$ (for example because $S$ is homeomorphic to the standard \sier carpet, and this is true of the peripheral circles in that case), and from now on we only consider those with diameter at most $\epsilon/2$. The issue we have to deal with now is that $D_\gamma$ can have diameter much larger than that of its boundary $\gamma$ (in fact, it is always the case that one of the two components of the complement of $\gamma$ is has ``large'' diameter). However, what we know, thanks to the fact that we are working with the restriction of the Euclidean metric, is that the component $D_\gamma$ of the complement of $\gamma$ either has diameter at most $\epsilon$, or it contains a ball in $(\Sph,d)$ of radius $1$. In fact, $\gamma$ is contained in a closed Euclidean ball $B\subseteq \mathbb R^3$ of radius $\epsilon/2$, and $B\cap \Sph$ is connected. Hence, either $D_\gamma$ is contained in $B$, yielding the first case, or $D_\gamma$ contains $\Sph-B$, which is easily seen to contain a half-sphere (since $\epsilon$ is sufficiently small), yielding the second case.
 
 The second case can occur at most finitely many times since, as we argued above, the $D_\gamma$ are pairwise disjoint, and we can only fit finitely many disjoint balls of radius $1$ on $\Sph$. Hence, there are only finitely many $\gamma$ with $diam(D_\gamma)\geq \epsilon$, as required.
\end{proof}

The following point-set topology lemma will allow us to check continuity of certain maps, namely the ones that we will encounter when constructing homeomorphisms between limits of entwined \sier curves.

\begin{lemma}\label{lem:piecewise_cont}
 Let $f: X \to Y$ be a map between two metric spaces and let $C_0,C_1,\dots$ be closed subsets of $X$. Suppose that
 \begin{enumerate}
 \item $X=\bigcup C_i$
  \item $C_0\cap C_i\neq \emptyset$ for all $i$,
  \item $diam(C_i)\to 0$,
  \item $diam(f(C_i))\to 0$,
  \item $f|_{C_i}$ is continuous for every $i$.
 \end{enumerate}
Then $f$ is continuous.
\end{lemma}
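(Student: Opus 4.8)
The plan is to prove continuity of $f$ at an arbitrary point $x\in X$ by a direct $\epsilon$--$\delta$ argument, splitting into the two cases that naturally arise: whether $x$ lies in only finitely many of the $C_i$, or in infinitely many. The key structural observation is that conditions (3) and (4) force all but finitely many of the sets $C_i$ and their images $f(C_i)$ to be uniformly small, so only finitely much ``bad'' behavior can occur, and the finitely many relevant pieces can be handled one at a time using (5).

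More precisely, fix $\epsilon>0$. By (4), there are only finitely many indices $i$ with $\diam(f(C_i))\geq \epsilon/2$; call this finite set $F$. First I would handle the \emph{tail} pieces, i.e.\ those $C_i$ with $i\notin F\cup\{0\}$. For such $i$, if $x\in C_i$ then every point $y\in C_i$ satisfies $d(f(x),f(y))<\epsilon/2<\epsilon$, so these pieces impose no constraint beyond requiring $y\in C_i$; more usefully, if $x\notin C_i$ then by (3) and the fact that $x\in C_0$ (here one uses that $x$ lies in \emph{some} $C_j$, hence by (1)–(2) is within $\diam(C_j)+\diam(C_0)$ of $C_0$... actually one only needs that $C_i$ for large $i$ has small diameter and, via (2), sits close to $C_0$) these tail pieces accumulate only near $C_0$. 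The cleanest route: let $F'$ be the finite set of indices $i$ for which $x\in C_i$ together with the index $0$ and all $i$ with $\diam f(C_i)\ge\epsilon/2$; this is finite since $x$ can lie in at most finitely many $C_i$ if it lies in finitely many, and if $x$ lies in infinitely many $C_i$ then by (3) all but finitely many of those have tiny diameter, so one instead argues that for a suitable neighborhood $U$ of $x$, $U\cap C_i\neq\emptyset$ forces either $i\in F'$ or $\diam(f(C_i))<\epsilon$.

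With $F'$ finite, I use (5): for each $i\in F'$ with $x\in C_i$, continuity of $f|_{C_i}$ gives $\delta_i>0$ with $f(B(x,\delta_i)\cap C_i)\subseteq B(f(x),\epsilon)$. For each $i\in F'$ with $x\notin C_i$, since $C_i$ is closed, $\dist(x,C_i)>0$ and I simply choose $\delta$ smaller than this distance so that $B(x,\delta)$ misses $C_i$ entirely. Taking $\delta=\min$ of all these finitely many positive numbers (and small enough that the leftover argument works), any $y\in B(x,\delta)$ lies in some $C_i$ by (1); that $C_i$ either is one of the finitely many pieces through $x$ in $F'$ (handled by (5)), or is a tail piece with $\diam(f(C_i))<\epsilon/2$ which contains $x$ as well — the last point needing the observation that a tail piece meeting $B(x,\delta)$ for $\delta$ small must, because its diameter is small and $x\in C_0$ is a limit of such pieces, actually be forced to contain $x$, or else be so far from $x$ that we can shrink $\delta$ to avoid it. The main obstacle is precisely this bookkeeping: ruling out the scenario where infinitely many tail pieces $C_i$ crowd arbitrarily close to $x$ while $x\notin C_i$, which is where conditions (2) and (3) are essential — every $C_i$ meets $C_0$, so if $\diam(C_i)$ is tiny and $C_i$ comes near $x$, then $C_i$ comes near $C_0$, pinning down the accumulation set. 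Once that is organized, continuity follows, and since $x$ was arbitrary, $f$ is continuous on $X$.
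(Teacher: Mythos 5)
Your overall strategy (an $\epsilon$--$\delta$ argument at a fixed point $x$, isolating the finitely many indices with $\diam(f(C_i))\ge \epsilon/2$ and handling those via (5) and closedness) is workable, but the decisive case is mishandled. The problematic pieces are the tail indices $i$ with $\diam(f(C_i))<\epsilon/2$ and $x\notin C_i$: knowing that $f(C_i)$ has small diameter tells you nothing about how far $f(C_i)$ is from $f(x)$. You assert that such a piece meeting $B(x,\delta)$ must ``actually be forced to contain $x$, or else be so far from $x$ that we can shrink $\delta$ to avoid it.'' That dichotomy is false: if $x\in C_0$, infinitely many small pieces $C_i$ can meet $C_0$ at points $z_i\to x$ with $z_i\neq x$ and $x\notin C_i$, so they crowd arbitrarily close to $x$ without containing it and cannot be avoided by shrinking $\delta$. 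This scenario cannot be ``ruled out''; it must be estimated. The missing step is the two-link chain through $C_0$: for $y\in C_i\cap B(x,\delta)$ pick $z\in C_i\cap C_0$ using (2); then $d(x,z)\le \delta+\diam(C_i)$ is small by (3), so continuity of $f|_{C_0}$ at $x$ gives $d(f(x),f(z))<\epsilon/2$, and $d(f(z),f(y))\le\diam(f(C_i))<\epsilon/2$ by (4). This is exactly the mechanism of the paper's proof, phrased there with sequences (replace $x_{n_k}\in C_k$ by $x'_{n_k}\in C_k\cap C_0$ and apply case 1 to the new sequence inside $C_0$); without it your argument does not close.

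Two smaller loose ends of the same kind: the chain above needs $x\in C_0$, so you must first dispose of the case $x\notin C_0$ (there $\mathrm{dist}(x,C_0)>0$, and by (2) and (3) all but finitely many pieces lie in a small neighborhood of $C_0$, hence away from $x$; the finitely many exceptions not containing $x$ are closed and can be avoided). Your parenthetical claim ``the fact that $x\in C_0$'' is neither justified nor true in general. Also, the set of $i$ with $x\in C_i$ may be infinite, so $F'$ as you define it need not be finite; the repair is that for $i\notin F$ with $x\in C_i$ no $\delta_i$ is needed at all (your first observation already covers them), so the minimum is only ever taken over $F\cup\{0\}$.
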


\begin{proof}
 Let $(x_n)$ be a sequence in $X$ converging to $x$. It suffices to show that a subsequence of $(f(x_n))$ converges to $f(x)$.
 
 We consider two cases, at least one of which applies, by property 1.
 
 1) There is a subsequence $(x_{n_k})$ contained in some $C_j$. Since $C_j$ is closed, we also have $x\in C_j$. Then $(f(x_{n_k}))$ converges to $f(x)$ because $f|_{C_j}$ is continuous by property 5.
 
 2) There is a subsequence $(x_{n_k})$ is so that no two elements of the subsequence are contained in the same $C_j$. Suppose $x_{n_k}\in C_k$. Let $x'_{n_k}\in C_k\cap C_0$ (which exists by property 2). Then $(x'_{n_k})$ converges to $x$ in view of property 3, which implies that $d_X(x'_{n_k},x_{n_k})$ tends to $0$. By case 1 (with $j=0$), we have that $(f(x'_{n_k}))$ converges to $f(x)$. Finally, by property 4, $d_Y(f(x_{n_k}),f(x'_{n_k}))$ also tends to $0$, so $(f(x_{n_k}))$ also converges to $f(x)$. 
\end{proof}

The following proposition is, essentially, the inductive step in the definition of homeomorphisms between limits of entwined \sier curves.

\begin{proposition}\label{prop:extend_homeo}
 Let $S\subseteq T, S'\subseteq T'$ be \sier curves, with $S$ entwined in $T$ and $S'$ entwined in $T'$. Then any homeomorphism $\phi:S\to S'$ can be extended to a homeomorphism $\psi:T\to T'$.
\end{proposition}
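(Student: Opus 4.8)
The plan is to extend $\phi$ over one complementary piece at a time, using the decomposition of Lemma~\ref{lem:Sierpinski_complement}, and then to verify continuity with Lemma~\ref{lem:piecewise_cont}. First I would record that, by the topological characterization of peripheral circles stated above (a circle $\gamma\subseteq S$ with $S-\gamma$ connected), the homeomorphism $\phi$ carries peripheral circles of $S$ bijectively onto peripheral circles of $S'$; write $\gamma'=\phi(\gamma)$. Fixing metrics on $T$ and $T'$ compatible with their topologies and applying Lemma~\ref{lem:Sierpinski_complement} to $S\subseteq T$ produces \sier curves $S_\gamma\subseteq T$, indexed by the (countably many) peripheral circles $\gamma$ of $S$, with $S_\gamma\cap S=\gamma$, which is a peripheral circle of $S_\gamma$, with the $S_\gamma$ pairwise disjoint, with $T=S\cup\bigcup_\gamma S_\gamma$, and with $\diam(S_\gamma)\to 0$; applying it to $S'\subseteq T'$ produces analogous \sier curves $S'_\delta\subseteq T'$.

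The main step is then to extend, for each $\gamma$, the circle homeomorphism $\phi|_\gamma\colon\gamma\to\gamma'$ to a homeomorphism $\phi_\gamma\colon S_\gamma\to S'_{\gamma'}$. Here $\gamma$ is a peripheral circle of $S_\gamma$ and $\gamma'$ is a peripheral circle of $S'_{\gamma'}$, so this is exactly the situation covered by the classical fact that any homeomorphism between a peripheral circle of one \sier curve and a peripheral circle of another extends to a homeomorphism of the whole curves (see \cite{K:homeos_Sierpinski}, together with uniqueness of the \sier curve \cite{Whyburn}). I would then define $\psi\colon T\to T'$ by $\psi|_S=\phi$ and $\psi|_{S_\gamma}=\phi_\gamma$. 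This is well defined because the two definitions agree on $S\cap S_\gamma=\gamma$ and the $S_\gamma$ are pairwise disjoint, and it is a bijection because $\phi$ is a bijection $S\to S'$, each $\phi_\gamma$ restricts to a bijection $S_\gamma-\gamma\to S'_{\gamma'}-\gamma'$, the sets $S_\gamma-\gamma$ partition $T-S$ while the sets $S'_\delta-\delta$ partition $T'-S'$, and $\gamma\mapsto\gamma'$ is a bijection; a short case analysis on where two points with equal $\psi$-image lie (both in $S$; one in $S$ and one outside; both outside in the same piece or in different pieces) gives injectivity, and a similar argument gives surjectivity.

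It remains to show $\psi$ is a homeomorphism. I would enumerate the pieces as $S_{\gamma_1},S_{\gamma_2},\dots$ and apply Lemma~\ref{lem:piecewise_cont} with $C_0=S$ and $C_i=S_{\gamma_i}$: these are closed, being compact; they cover $T$; each meets $C_0$ in $\gamma_i\neq\emptyset$; $\diam(C_i)\to 0$ by Lemma~\ref{lem:Sierpinski_complement}(4); $\psi|_{C_i}$ is continuous, being $\phi$ or some $\phi_\gamma$; and $\diam(\psi(C_i))=\diam(S'_{\gamma'_i})\to 0$ because $\gamma\mapsto\gamma'$ is a bijection between the peripheral circles of $S$ and those of $S'$, while only finitely many peripheral circles $\delta$ of $S'$ satisfy $\diam(S'_\delta)\geq\epsilon$ for any fixed $\epsilon$, again by Lemma~\ref{lem:Sierpinski_complement}(4) applied to $S'\subseteq T'$. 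Thus $\psi$ is a continuous bijection, and since $T$ is compact and $T'$ is Hausdorff it is a homeomorphism, which clearly extends $\phi$.

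The hard part is the extension fact for homeomorphisms of \sier curves invoked in the second paragraph; everything else is either formal or routine diameter bookkeeping. It is precisely to make this input applicable that Lemma~\ref{lem:Sierpinski_complement} was arranged so that each piece $S_\gamma$ meets $S$ along a curve that is a genuine peripheral circle of $S_\gamma$ (and likewise $S'_{\gamma'}$ meets $S'$ along a peripheral circle), rather than along an arbitrary circle.
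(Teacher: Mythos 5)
Your proposal is correct and follows essentially the same route as the paper: decompose $T$ and $T'$ via Lemma~\ref{lem:Sierpinski_complement}, extend $\phi$ over each piece using the classical fact that a homeomorphism between peripheral circles of \sier curves extends to the whole curves, and verify continuity of the glued map with Lemma~\ref{lem:piecewise_cont} before invoking compactness and the Hausdorff property. Your verification of hypothesis (4) of Lemma~\ref{lem:piecewise_cont} for the image pieces is slightly more explicit than the paper's, but the argument is the same.
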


\begin{proof}
 Let $S_i\subseteq T,S'_i\subseteq T'$ be the \sier curves that we obtain from Lemma \ref{lem:Sierpinski_complement}.  Since $\{S_i \cap S\}$ is the set of peripheral circles of $S$ and 
 $\{S'_i \cap S'\}$ the set of peripheral circles of $S'$, we can choose the indices in such a way that 
 $\phi(S_i\cap S)=S'_i\cap S$.
 
 The arguments from \cite[pages 322-323]{Whyburn} show that any homeomorphism between peripheral circles of \sier curves can be extended to a homeomorphism between the \sier curves. In particular, there are homeomorphisms $\phi_i:S_i\to S'_i$ that extend $\phi|_{S_i\cap S}$. Hence, we can define $\psi:T\to T'$ by requiring $\psi|_S=\phi, \psi|_{S_i}=\phi_i$.   This is a well-defined map in view of Lemma \ref{lem:Sierpinski_complement}-(3). Then $\psi$ is clearly bijective.  By Lemma \ref{lem:Sierpinski_complement}-(1)-(2)-(4)), the hypotheses of Lemma \ref{lem:piecewise_cont} hold, so $\psi$ is continuous. Hence,  it is a homeomorphism because $T,T'$ are compact and Hausdorff. Thus, $\psi$ is the desired extension of $\phi$.
\end{proof}

Finally, we are ready for the main theorem of this section.

\begin{definition}
 An $\omega$-\sier curve is a topological space $\varinjlim_{i\in \mathbb N} X_i$, where each $X_i$ is a \sier curve and each $X_i$ is entwined in $X_{i+1}$.
\end{definition}

\begin{theorem}\label{thm:limit_Sierp}
 Any two $\omega$-\sier curves are homeomorphic.
\end{theorem}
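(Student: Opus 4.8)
The plan is to mimic the proof of Theorem~\ref{thm:limit_Cantor} exactly, replacing the extension lemma for entwined Cantor spaces (Lemma~\ref{lem:extend_homeo}) with its \sier-curve analogue, which we have already proven as Proposition~\ref{prop:extend_homeo}. Concretely, let $X=\varinjlim_{i\in\mathbb N} X_i$ and $X'=\varinjlim_{i\in\mathbb N} X'_i$ be two $\omega$-\sier curves, where each $X_i$ is entwined in $X_{i+1}$ and likewise for the $X'_i$. Since all \sier curves are homeomorphic \cite{Whyburn}, we may start with any homeomorphism $\psi_0:X_0\to X'_0$.

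Next I would build a compatible sequence of homeomorphisms by induction. Suppose $\psi_n:X_n\to X'_n$ has been constructed. Apply Proposition~\ref{prop:extend_homeo} with $S=X_n\subseteq T=X_{n+1}$ and $S'=X'_n\subseteq T'=X'_{n+1}$ (using that $X_n$ is entwined in $X_{n+1}$ and $X'_n$ in $X'_{n+1}$) to obtain a homeomorphism $\psi_{n+1}:X_{n+1}\to X'_{n+1}$ extending $\psi_n$. Because each $\psi_{n+1}$ restricts to $\psi_n$ on $X_n$, the maps $\psi_n$ fit together to give a well-defined bijection $\psi:X\to X'$ with $\psi|_{X_n}=\psi_n$.

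Finally, I would check that $\psi$ is a homeomorphism using the universal property of the direct limit topology: a map out of (resp. into) a direct limit is continuous if and only if its restriction to (resp. composition with the inclusion of) each $X_n$ is continuous. Since $\psi|_{X_n}=\psi_n$ is a homeomorphism for every $n$, and $X'=\varinjlim X'_n$ carries the direct limit topology as well, both $\psi$ and $\psi^{-1}$ are continuous. Hence $\psi$ is the desired homeomorphism.

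I do not expect any serious obstacle here: all the difficulty has been front-loaded into Proposition~\ref{prop:extend_homeo} (and, underlying it, the classical Whyburn extension result for peripheral circles and Lemma~\ref{lem:Sierpinski_complement}). The only point requiring a little care is verifying that the inductively defined $\psi_{n+1}$ genuinely \emph{extends} $\psi_n$ rather than merely agreeing with it on peripheral circles — but this is exactly what the conclusion ``$\psi|_S=\phi$'' in Proposition~\ref{prop:extend_homeo} provides, so the induction goes through verbatim as in the Cantor case.
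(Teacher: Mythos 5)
Your proposal is correct and is exactly the paper's argument: the authors also prove Theorem~\ref{thm:limit_Sierp} by running the proof of Theorem~\ref{thm:limit_Cantor} verbatim with Proposition~\ref{prop:extend_homeo} in place of Lemma~\ref{lem:extend_homeo}. Your additional remarks on gluing the $\psi_n$ and on continuity via the direct limit topology are just the details the paper leaves implicit.
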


\begin{proof}
 The proof is identical to the proof of Theorem \ref{thm:limit_Cantor}, using Proposition \ref{prop:extend_homeo} instead of Lemma \ref{lem:extend_homeo}.
\end{proof}

\section{$\omega$-\sier boundaries}

We now study Morse boundaries of fundamental groups of finite-volume hyperbolic $3$-manifolds. The main theorem of this section is:

\begin{varthm}[Theorem \ref{thm:Sierpinski_boundary}]
 Let $M$ be a finite-volume hyperbolic $3$-manifold with at least one cusp, and let $G=\pi_1(M)$. Then $\partial_M G$ is an $\omega$-\sier curve.
\end{varthm}

\subsection{Setup and notation} Let $M$ be a finite-volume hyperbolic $3$-manifold with at least one cusp, and let $G=\pi_1(M)$, regarded as a subgroup of $Isom(\mathbb H^3)$. Lifting disjoint cuspidal neighborhoods to $\mathbb H^3$, we obtain an equivariant family of disjoint horoballs $\{\mathcal H_p\}_{p\in P}$. Regarding $\Sph$ with the standard metric as the boundary of $\mathbb H^3$, we have the collection $P\subseteq \Sph$ of parabolic points for $G$. For each $p$, we denote $r_p=e^{-d_{\mathbb H^3}(x_0,H_p)}$, where $x_0$ is the origin of the Poincar\'e disk model of $\mathbb H^3$.

For a given $0<\lambda\leq 1$, we denote $V_\lambda=\Sph\setminus \bigcup_{p\in P} B(p,\lambda r_p)$. 
Set $X=\mathbb H^3-\bigcup H_p$, a neutered space for $M$. Denote by $O_p\subseteq X$ the horosphere that bounds $H_p$.
 
We will use the fact that $X$ is a $\mathrm{CAT}(0)$ space and that each $O_p$ is a flat in $X$, meaning that it is convex and isometric to $\mathbb R^2$, see e.g. \cite[Chapter II.11]{BridsonHaefliger}. In addition, the inclusion map $\iota:X\to\mathbb H^3$ is $1$-Lipschitz and proper.
 
 We will use this notation throughout this section.
 
\subsection{From the Morse boundary to $\partial \mathbb H^3$}

In this section we relate the Morse boundary of $X$ with $\partial \mathbb H^3$; the main result is Proposition \ref{prop:Morse=cheese}. Roughly speaking, we have to show that Morse geodesic rays in $X$ correspond to geodesic rays in $\mathbb H^3$ that do not spend too much time in horoballs, and also relate the latter rays with points in the subspace $V_\lambda \subset \partial \mathbb H^3$ defined above. This can be done in greater generality than our case (groups hyperbolic relative to subgroups with empty Morse boundaries, replacing $\mathbb H^3$ with the cusped space), but we decided to use the extra structure available to us to make the proofs simpler.

First of all, we show that Morse geodesic rays in $X$ give quasi-geodesics in $\mathbb H^3$.

\begin{lemma}\label{lem:retract_neutered}
Let $\gamma$ be an $N$-Morse geodesic ray in $X$. Then $\iota(\gamma)$ is a $(K,L)$-quasi-geodesic ray where $K, L$ depend only on $N$.
\end{lemma}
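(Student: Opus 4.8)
The plan is to show that $\iota(\gamma)$ cannot make long detours into the horoballs $H_p$, and that this is exactly the obstruction to being a quasi-geodesic in $\mathbb{H}^3$. Since $\gamma$ is a geodesic in the $\mathrm{CAT}(0)$ space $X$, it is certainly a $(1,0)$-quasi-geodesic there, and $\iota$ is $1$-Lipschitz, so lengths only decrease; the issue is purely the lower bound $d_{\mathbb{H}^3}(\iota(\gamma(s)),\iota(\gamma(t))) \geq K^{-1}|s-t| - L$. First I would recall the standard fact (e.g.\ from the theory of relatively hyperbolic groups, or directly from horoball geometry in $\mathbb{H}^3$) that a path in $\mathbb{H}^3$ which is a concatenation of geodesic segments each of which either lies outside all horoballs, or is a geodesic arc entering and exiting a single horoball $H_p$ through its boundary horosphere $O_p$, is a quasi-geodesic with constants depending only on how deep the horoball excursions go — more precisely, the distortion is controlled once we know the \emph{entry-exit} distance along $O_p$ for each excursion is bounded, using that $O_p$ is a horosphere where the intrinsic metric is exponentially distorted.

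The heart of the argument is therefore to bound $\mathrm{diam}(\gamma \cap O_p)$ (equivalently the entry-exit separation along each horosphere) by a constant depending only on $N$. Here I would use that each $O_p$ is a flat in $X$: it is convex and isometric to $\mathbb{R}^2$. If $\gamma$ entered and exited $O_p$ at points $x,y$ with $d(x,y)$ large, then the $\mathbb{R}^2$-geodesic $[x,y]$ inside $O_p$ is a geodesic of $X$ (by convexity of the flat), hence an $N'$-Morse geodesic by Lemma~\ref{lem:triangles} applied to the triangle $x,y$ with two sides along $\gamma$ (or more simply: a subgeodesic-type comparison). But $O_p \cong \mathbb{R}^2$ contains arbitrarily large flat squares, so it contains $(\lambda,\epsilon)$-quasi-geodesics between $x$ and $y$ that stray arbitrarily far from the straight segment $[x,y]$ (e.g.\ an L-shaped path). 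For $d(x,y)$ large enough relative to $N'$, such a quasi-geodesic violates the $N'$-Morse condition. This contradiction yields a uniform bound $D = D(N)$ on $\mathrm{diam}(\gamma \cap O_p)$ for every $p$.

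With that bound in hand, I would decompose $\iota(\gamma)$ into its maximal subsegments inside horoballs and outside all horoballs. Inside $H_p$, the $X$-geodesic $\gamma$ runs along $O_p$, so $\iota(\gamma)$ replaces the genuine $\mathbb{H}^3$-geodesic shortcut through $H_p$ by an arc along the horosphere of bounded $O_p$-diameter $D$; the $\mathbb{H}^3$-length of such an arc is at most the length of an $O_p$-geodesic of length $D$, which is bounded by a constant $D' = D'(D)$ (horospheres have finite-length short arcs). Replacing each such horosphere arc by the corresponding $\mathbb{H}^3$-geodesic chord (which has length $\leq D'$ as well) produces a path that is the concatenation of genuine $\mathbb{H}^3$-geodesic segments with bounded-length horoball excursions, hence a $(K_0,L_0)$-quasi-geodesic by the standard fact above, and it lies within Hausdorff distance $O(D')$ of $\iota(\gamma)$. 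Therefore $\iota(\gamma)$ itself is a $(K,L)$-quasi-geodesic with $K,L$ depending only on $D'$, hence only on $N$.

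The main obstacle is the horosphere-detour bound, i.e.\ promoting ``$O_p$ is a flat'' into a contradiction with the Morse property when the entry-exit distance is large; the delicate point is to set up the competing quasi-geodesic inside the flat $\mathbb{R}^2$ with controlled constants $(\lambda,\epsilon)$ independent of how large $d(x,y)$ is, so that for $d(x,y)$ sufficiently large (depending only on $N$) it escapes the $N$-neighborhood of $[x,y]$. Everything after that is the routine relatively-hyperbolic bookkeeping of horoball excursions, which I would either cite or do quickly.
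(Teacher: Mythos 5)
Your first main step --- bounding the horospherical excursions of $\gamma$ by playing the flatness of $O_p$ against the $N$-Morse property --- is exactly the geometric heart of the paper's proof as well, and that part is essentially right (modulo the small point that the L-shaped competitor must escape the $N(\lambda,\epsilon)$-neighborhood of the \emph{whole} ray $\gamma$, not just of the subsegment $[x,y]$; the paper handles this cleanly by using that Morse geodesics in $\mathrm{CAT}(0)$ spaces are strongly contracting, which bounds the diameter of the closest-point projection $\pi(O_p)$ of the \emph{entire} horosphere, not merely of $\gamma\cap O_p$).

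The genuine gap is in your concluding step. The ``standard fact'' you invoke --- that a concatenation of geodesic segments, each lying outside the horoballs or forming a bounded chord of a single horoball, is a quasi-geodesic with constants depending only on the excursion depth --- is false as stated: two long geodesic segments outside all horoballs concatenated at angle zero backtrack completely and form no quasi-geodesic, with no horoball excursions at all. What rules out such backtracking in your situation is precisely that the path is (close to) the image of a geodesic of $X$, and converting ``no backtracking in the $X$-metric'' plus ``bounded horoball penetration'' into ``quasi-geodesic in $\mathbb{H}^3$'' is essentially the entire content of the lemma; it is the Farb-style bounded-coset-penetration bookkeeping, and it cannot simply be cited in the form you state. (Your decomposition of $\gamma$ into $\mathbb{H}^3$-geodesic arcs and horospherical arcs also needs a word of justification, though it does follow from convexity of the horoballs.) The paper avoids this entire issue: having bounded $\mathrm{diam}\,\pi(O_p)$ for every horosphere, it extends $\pi$ over each horoball by sending $H_p\setminus O_p$ to a point of $\pi(O_p)$, obtaining a coarse Lipschitz retraction $\pi'\colon \mathbb{H}^3\to\iota(\gamma)$ with constants depending only on $N$; the existence of such a retraction yields the lower quasi-geodesic bound in one stroke, with no concatenation analysis. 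If you want to keep your route, you must either prove the non-backtracking quantitatively or cite an actual relative-hyperbolicity statement (e.g.\ that $X$-geodesics are relative quasi-geodesics without backtracking and that such paths with uniformly bounded penetration are $\mathbb{H}^3$-quasi-geodesics); alternatively, upgrade your excursion bound to a bound on $\mathrm{diam}\,\pi(O_p)$ and follow the retraction argument.
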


\begin{proof}
We will show that there exists a $(\lambda, \epsilon)$-coarse Lipschitz retraction of $\mathbb{H}^3$ onto $\iota(\gamma)$  where $\lambda, \epsilon$ depend on $N$. It follows that $\iota(\gamma)$ is a $(K, L)$-quasi-geodesic where $K,L$ depend on $\lambda, \epsilon$. 

Since $\gamma$ is $N$-Morse and $X$ is $\mathrm{CAT}(0)$, by \cite{charney-sultan}, we know that $\gamma$ is $D$-strongly contracting, where $D$ depends on $N$. 

Let $\pi:X\to \gamma$ be the closest-point projection in $X$. We claim that for each horosphere $O_p$, the diameter of $\pi(O_p)$ is bounded in terms of $N$. In fact, suppose that the said diameter is much larger than $D$ (recall that $D$ depends on $N$), so that there exist points $x,y\in O_p$ with far away projections to $\gamma$. The geodesic $\alpha$ from $x$ to $y$ is contained in $O_p$, and by strong contraction it has to pass within controlled distance of $\pi(x),\pi(y)$. Hence, $\gamma$ has a long subgeodesic, the one with endpoints $\pi(x),\pi(y)$, that lies in a controlled neighborhood of $O_p$. The length of this subgeodesic is, up to additive constants, the same as the diameter of $\pi(O_p)$. Since $O_p$ is a flat, and $\gamma$ is $N$-Morse, this length can then be controlled in terms of $N$, for otherwise we would find a quasi-geodesic in $O_p$ straying too far away from $\gamma$. 

Now, using the claim above, we can extend $\pi$ to $\pi':\mathbb H^3\to \gamma$ by mapping $H_p-O_p$ to any point in $\pi(O_p)$. It is readily seen that $\pi'$ is a coarsely Lipschitz retraction with controlled constants, as required.
\end{proof}

We now show that rays that do not spend much time in any horoball are exactly those with limits points in $V_\lambda$.

\begin{lemma}\label{lem:horoball_geometry}
 Fix a basepoint $x_0\in X$, let $\gamma$ be a geodesic ray in $\mathbb H^3$ starting at $x_0$ with limit point $x\in \partial\mathbb H^3$, and let $p\in P$.
 \begin{enumerate}
  \item For every $C$ there exists $\lambda$ with the following property. Suppose that $\gamma$ intersects the horoball $H_p$ in a set of diameter at most $C$. Then $x\notin B(p,\lambda r_p)$.\label{item:horob_to_cheese}
  \item For every $\lambda$ there exists $C$ with the following property. Suppose that $x\notin B(p,\lambda r_p)$. Then $\gamma$ intersects the horoball $H_p$ in a set of diameter at most $C$.\label{item:cheese_to_horob}
 \end{enumerate}
\end{lemma}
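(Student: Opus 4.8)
The plan is to analyze the geometry of a geodesic ray $\gamma$ in $\mathbb{H}^3$ near the horoball $H_p$ using the explicit horoball picture, and to translate ``time spent in $H_p$'' into ``how close the endpoint $x$ comes to the parabolic point $p$.'' First I would set up a convenient model: place $p$ at a point of $\Sph$ and use the upper half-space model (or the disk model with an isometry moving $p$ to a standard location) so that $H_p$ is a standard horoball tangent to $\Sph$ at $p$, whose Euclidean diameter (shadow size) is comparable to $r_p = e^{-d_{\mathbb{H}^3}(x_0, H_p)}$; this last comparison is a direct computation in the half-space model and is really the content that makes $r_p$ the right quantity. The shadow of $H_p$ on $\Sph$ as seen from $x_0$ is then a ball of radius comparable to $r_p$ centered at $p$, and more generally the ``time spent in $H_p$'' by a geodesic entering and leaving is controlled by how deep into $H_p$ it penetrates, which in the half-space model corresponds to how small the Euclidean ball it passes through is.

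For part (1), suppose $\gamma$ meets $H_p$ in a set of diameter at most $C$. The maximal depth $d_{\mathbb{H}^3}(\gamma, \partial H_p \text{ interior})$ that $\gamma$ reaches inside $H_p$ is then at most roughly $C/2$ plus a constant, because a geodesic going depth $\delta$ into a horoball spends time comparable to $2\delta$ inside it (again a one-variable computation). A geodesic from $x_0$ that only penetrates $H_p$ to bounded depth cannot have its endpoint too close to $p$: quantitatively, if $x \in B(p, \lambda r_p)$ with $\lambda$ small, then the geodesic from $x_0$ to $x$ must enter $H_p$ and reach depth at least $\log(1/\lambda) - O(1)$, hence spends time at least $2\log(1/\lambda) - O(1)$ inside. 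Choosing $\lambda$ small enough that $2\log(1/\lambda) - O(1) > C$ gives the contrapositive, which is exactly what is claimed. Part (2) is the same estimate run in reverse: if $x \notin B(p, \lambda r_p)$, then the geodesic from $x_0$ to $x$ penetrates $H_p$ to depth at most $\log(1/\lambda) + O(1)$ (it could fail to enter at all), and therefore meets $H_p$ in a set of diameter at most $C := 2\log(1/\lambda) + O(1)$.

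The technical heart in both directions is the dictionary between (a) the Euclidean/visual quantity ``distance from $x$ to $p$ on $\Sph$, normalized by $r_p$,'' (b) the hyperbolic quantity ``depth to which the geodesic $[x_0, x]$ enters $H_p$,'' and (c) the quantity ``length of $\gamma \cap H_p$.'' I expect the comparison $\text{shadow radius of } H_p \asymp r_p$ and the penetration-depth estimate to be the main obstacles, in the sense that they require care about basepoint dependence and about the possibility that $x_0$ itself is near $H_p$ or that $\gamma$ starts inside $H_p$; but since the horoballs are disjoint and $x_0 \in X = \mathbb{H}^3 - \bigcup H_p$, one has a uniform lower bound on $d(x_0, H_p)$-type quantities only up to the $r_p$ normalization, which is precisely why the statement is phrased with $B(p, \lambda r_p)$ rather than a fixed-radius ball. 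A clean way to organize this is to reduce, by applying an isometry of $\mathbb{H}^3$ taking $x_0$ to the origin $0$ and $H_p$ to a horoball whose shadow is centered at $p$ of radius $r_p$, to the case of a single standard horoball and then verify the two inequalities by elementary hyperbolic trigonometry / the half-space formulas. Everything else — converting ``geodesic ray with limit $x$'' to ``geodesic segment $[x_0,x]$'' and passing from a segment to a ray — is routine.
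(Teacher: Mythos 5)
Your argument is correct, and it reaches the same quantitative dictionary as the paper but packages it differently. The paper never passes to an explicit model: it writes the visual metric as $d_{\partial\mathbb H^3}(x,p)=e^{-(x,p)_{x_0}}$, so that $x\in B(p,\lambda r_p)$ translates into the Gromov product $(\gamma,\gamma_p)_{x_0}$ being at least $d(x_0,H_p)-\ln\lambda$, and then argues via $\delta$-thin triangles that $\gamma$ fellow-travels the ray $\gamma_p$ toward $p$ well past the entry time $d(x_0,H_p)$ (for part (1)), and conversely (for part (2)) runs a small synthetic argument on the ideal triangle with vertices $a,b,p$, where $[a,b]=\gamma\cap H_p$, using Busemann monotonicity and the fact that the midpoint of the chord $[a,b]$ is $\delta$-close to the two sides ending at $p$, to convert a long intersection with $H_p$ into a large Gromov product. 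You instead normalize by an isometry, read off the penetration depth of the semicircle in the half-space model, and use the two elementary comparisons (length of $\gamma\cap H_p\approx 2\times\text{depth}+O(1)$ and depth $\approx\log(1/\lambda)+O(1)$); since $\gamma\cap H_p$ is a geodesic segment by convexity, its diameter is its length, so both implications follow. The two routes are equivalent in content --- your ``depth'' is exactly $(x,p)_{x_0}-d(x_0,H_p)$, and the normalization $r_p=e^{-d(x_0,H_p)}$ is what makes the $O(1)$'s uniform over $p$, as you note --- but the paper's coarse version transfers verbatim to general relatively hyperbolic settings (replacing $\mathbb H^3$ by a cusped space), whereas your explicit computation is specific to real hyperbolic space and in exchange yields cleaner, essentially sharp constants. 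No gap either way.
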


\begin{proof}

Recall that the standard metric on $\Sph=\partial \mathbb H^3$ can be written in terms of the Gromov product as $d_{\partial\mathbb H^3}([\alpha],[\beta])= e^{-(\alpha,\beta)_{x_0}}$.

(\ref{item:horob_to_cheese}) We have to show that, for a suitable $\lambda=\lambda(C)$, if $x\in B(p,\lambda r_p)$ then $diam(\gamma\cap H_p)\geq C$. If $x\in B(p,\lambda r_p)$, then
the Gromov product of $\gamma$ and the geodesic ray $\gamma_p$ is at least $d(x_0,H_p)-\ln(\lambda)$.  Thus when $\lambda$ is close to $0$, $\gamma$ and $\gamma_p$ stay within  $\delta$ of each other (where $\delta$ is the hyperbolicity constant of $\mathbb H^3$) for much longer than $d(x_0,H_p)$, so that $\gamma$ intersects $H_p$ in a set of large diameter. (Here we use that, up to bounded error, two rays $\delta$-fellow-travel for time equal to their Gromov product; for later purposes we note that the same holds replacing ``$\delta$'' with ``$2\delta$''.)
 
(\ref{item:cheese_to_horob}) We have to show that, given $\lambda$, there exists $C=C(\lambda)$ such that  if $diam(\gamma\cap H_p)\geq C$, then $x\in B(p,\lambda r_p)$.  Let $\gamma_p$ be as above, and let $\beta_p$ be the geodesic line with endpoints at infinity $x$ and $p$. Since horoballs are convex, $\gamma\cap H_p$ is a subgeodesic of $\gamma$, say with endpoints $a,b$. Since the Busemann function associated to $p$ is monotonic on $\gamma_p$ and $\beta_p$, we see that for $C$ large compared to $\delta$, up to switching $a$ and $b$, $a$ is $\delta$-close to $\gamma_p$ and $b$ is $\delta$-close to $\beta_p$ (since $a,b$ cannot be both close to, say, $\gamma_p$). 
 
Consider the triangle  with vertices $a,b,p$.  Since $a$ and $b$ lie on the horosphere bounding $H_p$, the midpoint $m$ of $[a,b]$ is equidistant from the other two sides (this is easily seen in the upper half space model of $\mathbb H^3$) and by the thin triangle condition, this distance is at most $\delta$.  It follows that the geodesic segment $[a,m]$ lies in the $\delta$-neighborhood of $[a,p]$ which, in turn, lies in the $\delta$-neighborhood of $\gamma_p$.  Thus $\gamma$ and $\gamma_p$ stay $2\delta$-close for a distance of at least $d(x_0,H_p)+ \frac{1}{2}C$.  From this we can see that for $C$ sufficiently large, the Gromov product $(\gamma, \gamma_p)_{x_0}$ will be at least $d(x_0,H_p) - \ln(\lambda)$, and hence the distance between $x$ and $p$ will be at most $\lambda r_p$. 
\end{proof}
 
The proof of the following lemma is a variation on the usual arguments to show that contracting properties of various kinds imply the Morse property. We will need it to show that certain rays in $X$, constructed from rays in $\mathbb H^3$, are Morse.

\begin{lemma}\label{lem:outside_neigh}
For every diverging function $f:\R_+\to\R_+$ there exists a Morse gauge $N$ with the following property.
 Let $Z$ be a geodesic metric space and let $\gamma\subseteq Z$ be a geodesic. Suppose that for each $R>0$ any path $\beta$ that intersects $N_R(\gamma)$ only at its endpoints $x,y$ has the property that $l(\beta)\geq f(R)d(x,y)-f(R)$. Then $\gamma$ is $N$-Morse.
\end{lemma}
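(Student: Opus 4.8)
The plan is to run the standard ``strong-contraction-type estimate implies the Morse property'' argument: from the hypothesis I would extract a \emph{uniform} bound on how far a quasi-geodesic with endpoints on $\gamma$ can stray from $\gamma$. Fix the diverging function $f$ once and for all, and let $(\lambda,\epsilon)$ be arbitrary quasi-geodesic constants; the goal is to produce $N(\lambda,\epsilon)$, depending only on $\lambda,\epsilon$ and $f$, bounding the distance from $\gamma$ of every $(\lambda,\epsilon)$-quasi-geodesic with endpoints on $\gamma$.

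The first step is to tame the quasi-geodesic, so that the path hypothesis is applicable at all. Given an arbitrary $(\lambda,\epsilon)$-quasi-geodesic $\alpha_0$ with endpoints on $\gamma$, I would replace it, via the taming procedure of \cite[Lemma III.H.11]{BridsonHaefliger} (exactly as in the proof of Theorem \ref{thm:Bass--Serre_is_Cantor}), by a continuous, $L$-Lipschitz $(\lambda',\epsilon')$-quasi-geodesic $\alpha:[0,T]\to Z$ with the same endpoints and with $\alpha_0$ contained in the $L$-neighborhood of $\alpha$, where $L,\lambda',\epsilon'$ depend only on $\lambda,\epsilon$. It then suffices to bound the distance of $\alpha$ from $\gamma$.

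The core of the argument is an excursion estimate. Using that $f$ diverges, fix $R=R(\lambda',\epsilon')$ with $f(R)>2\lambda' L$. The function $t\mapsto d(\alpha(t),\gamma)$ is continuous and vanishes at $t=0,T$, so $\{t:d(\alpha(t),\gamma)>R\}$ is a disjoint union of open subintervals $(s,t)\subseteq(0,T)$ with $d(\alpha(s),\gamma)=d(\alpha(t),\gamma)=R$; for each such interval $\beta:=\alpha|_{[s,t]}$ is a path meeting the (closed) $R$-neighborhood $N_R(\gamma)$ only at its endpoints $x:=\alpha(s)$, $y:=\alpha(t)$. The hypothesis then gives $l(\beta)\ge f(R)\,d(x,y)-f(R)$, while the Lipschitz and quasi-geodesic bounds give $l(\beta)\le L(t-s)\le L\lambda'\,d(x,y)+L\lambda'\epsilon'$. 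Combining the two inequalities and using $f(R)-\lambda'L>f(R)/2>0$ forces $d(x,y)\le D_0$ for a constant $D_0$ depending only on $\lambda',\epsilon'$, hence $l(\beta)\le D_1:=L\lambda'D_0+L\lambda'\epsilon'$; in particular $\beta$ lies in the $(R+D_1)$-neighborhood of $\gamma$. Since on the complement of these excursions $\alpha$ lies in $N_R(\gamma)$ by definition, we conclude $\alpha\subseteq N_{R+D_1}(\gamma)$, whence $\alpha_0\subseteq N_{R+D_1+L}(\gamma)$. Thus $\gamma$ satisfies the ordinary Morse property with a gauge $N_0$ depending only on $\lambda,\epsilon$, i.e.\ only on $f$; applying Lemma \ref{lem:morse subgeo} and the convention following it then upgrades this to ``$\gamma$ is $N$-Morse'' for a gauge $N$ that still depends only on $f$.

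I do not anticipate a genuine obstacle here; the two points requiring care are (a) passing to continuous Lipschitz quasi-geodesics, without which one cannot speak of the excursions $\beta$ or apply the path hypothesis, and (b) bookkeeping the constants so that $L,\lambda',\epsilon',R,D_0,D_1$ all depend only on $\lambda,\epsilon$ and $f$ and never on the ambient space $Z$ or the geodesic $\gamma$ — this uniformity is precisely what makes the resulting $N$ a legitimate Morse gauge. The divergence hypothesis on $f$ enters only through the single choice of $R$ with $f(R)$ exceeding twice the Lipschitz-times-multiplicative constant.
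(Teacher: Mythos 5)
Your proposal is correct and follows essentially the same route as the paper: choose $R$ with $f(R)$ dominating the quasi-geodesic constants, bound each excursion outside $N_R(\gamma)$ by playing the length lower bound from the hypothesis against the tameness/Lipschitz upper bound, and conclude that the quasi-geodesic stays in a controlled neighborhood of $\gamma$. The only cosmetic difference is that you make the taming step explicit at the outset, whereas the paper builds it into the assumption $l(\alpha|_{[t,u]})\leq K|t-u|+C$.
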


\begin{proof} 
 Let $\alpha:[a,b]\to Z$ be a $(K,C)$-quasi-geodesic, so $l(\alpha|_{[t,u]})\leq K |t-u| +C$ for each $t,u\in[a,b]$. Choose $R$ so that $f(R)\geq K^2+K$. We will show that any subpath $\beta$ of $\alpha$ that intersects $N_R(\gamma)$ only at its endpoints $x=\alpha(t)$ and $y=\alpha(u)$ has length bounded by a constant $D$ depending only on $K,C,f(R)$, showing that $\alpha$ is contained in the $(R+ D)$-neighborhood of $\gamma$.
 
 We have,
 \begin{align*}
   K|t-u| +C \geq l(\beta) & \geq  f(R)d(x,y)-f(R) \\
  & \geq \frac{f(R)}{K}|t-u|-Cf(R)-f(R)\\
  & \geq (K+1)|t-u|-Cf(R)-f(R),
\end{align*}
 from which it follows that $|t-u|\leq Cf(R)+f(R)+C$. Setting $D=KCf(R)+Kf(R)+KC+C$, we conclude that  
$$ l(\beta)\leq KCf(R)+Kf(R)+KC+C=D$$  
as desired.
\end{proof}

Finally, we compare the spaces $\partial_M^N X$ with the spaces $V_\lambda$ and the describe how, in order to study $\partial_M X$, one can study the $V_\lambda$ instead. Hung Cong Tran has related results for general relatively hyperbolic groups \cite{Tran:Morse_rel_hyp}. However, for us the control on the size of the removed balls will be crucial, and this is not addressed in Tran's paper.

\begin{proposition}\label{prop:Morse=cheese}
 There exists a continuous injective map $\Psi: \partial_M X\to \Sph$ with the following properties.
 \begin{enumerate}
  \item For each Morse gauge $N$ there exist $\lambda =\lambda (N)$ so that $\Psi\left(\partial_M^{N} X\right)\subseteq V_\lambda$.\label{item:Morse_to_cheese}
  \item For every $0<\lambda\leq 1$, there exists a Morse gauge $N_\lambda$ so that for each $N\geq N_\lambda $ we have $V_\lambda\subseteq \Psi\left(\partial_M^{N} X\right)$.\label{item:cheese_to_Morse}
 \end{enumerate}
\end{proposition}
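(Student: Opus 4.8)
The plan is to build the map $\Psi$ using the results of this section. Given $[\gamma]\in\partial_M X$, choose an $N$-Morse representative $\gamma$ based at $x_0$. By Lemma \ref{lem:retract_neutered}, $\iota(\gamma)$ is a $(K,L)$-quasi-geodesic ray in $\mathbb H^3$ with $K,L$ depending only on $N$; since $\mathbb H^3$ is hyperbolic, $\iota(\gamma)$ has a well-defined endpoint in $\Sph$, and we set $\Psi([\gamma])$ to be this endpoint. First I would check this is well-defined: two equivalent Morse rays in $X$ stay within bounded Hausdorff distance of each other, hence their images under the $1$-Lipschitz map $\iota$ do too, so they have the same endpoint in $\Sph$. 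For injectivity, suppose $[\gamma_1]\neq[\gamma_2]$; then (since $X$ is $\mathrm{CAT}(0)$ and proper) the geodesics $\gamma_1,\gamma_2$ diverge in $X$, and the point is that they must then diverge in $\mathbb H^3$ as well — otherwise $\iota(\gamma_1)$ and $\iota(\gamma_2)$ would asymptote to the same point of $\Sph$, which by a thin-triangles argument in $\mathbb H^3$ would force $\gamma_1,\gamma_2$ to fellow-travel in $X$ for arbitrarily long times outside all horoballs (using Lemma \ref{lem:horoball_geometry}\eqref{item:cheese_to_horob} to control the horoball excursions and the fact that the horospheres are convex flats), contradicting $[\gamma_1]\neq[\gamma_2]$. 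Continuity follows from the fact that $N$-Morse rays converging in the compact-open topology on $\partial_M^N X$ have images converging in $\Sph$ (the $(K,L)$-quasi-geodesic constants being uniform on each stratum), together with the direct-limit topology on $\partial_M X$.

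For property \eqref{item:Morse_to_cheese}: if $[\gamma]\in\partial_M^N X$, then $\gamma$ is $N$-Morse, so $\gamma$ meets each horoball $H_p$ in a set of diameter bounded by some $C=C(N)$ — this is essentially the claim proved inside Lemma \ref{lem:retract_neutered}, that the closest-point projection $\pi(O_p)$ has diameter bounded in terms of $N$, combined with the fact that a geodesic in $X$ runs along $O_p$ for roughly the length of $\pi(O_p)$. Then Lemma \ref{lem:horoball_geometry}\eqref{item:horob_to_cheese} gives $\lambda=\lambda(C)=\lambda(N)$ with $\Psi([\gamma])=x\notin B(p,\lambda r_p)$ for every $p$, i.e.\ $\Psi([\gamma])\in V_\lambda$.

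For property \eqref{item:cheese_to_Morse}: fix $0<\lambda\leq 1$ and take $x\in V_\lambda$. Let $\gamma_{\mathbb H}$ be the $\mathbb H^3$-geodesic ray from $x_0$ to $x$. By Lemma \ref{lem:horoball_geometry}\eqref{item:cheese_to_horob}, $\gamma_{\mathbb H}$ meets each $H_p$ in a set of diameter $\leq C(\lambda)$. Now I would replace $\gamma_{\mathbb H}$ by a path $\gamma$ in $X$: detour around each horoball $H_p$ along the horosphere $O_p$, staying within bounded distance of $\gamma_{\mathbb H}$ on each excursion; since the excursion into $H_p$ has length $\leq C(\lambda)$ and $O_p$ is a flat, the detour has length linearly bounded in $C(\lambda)$, so $\gamma$ is a quasi-geodesic in $X$ with constants depending only on $\lambda$, and its $X$-geodesic straightening $\gamma'$ has endpoint mapping to $x$ under $\Psi$. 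To see $\gamma'$ is $N$-Morse with $N=N_\lambda$ independent of $x$, I would apply Lemma \ref{lem:outside_neigh}: one must show that any path $\beta$ in $X$ meeting $N_R(\gamma')$ only at its endpoints is long compared to the distance between its endpoints. Projecting to $\mathbb H^3$ and using that $\iota(\gamma')$ is (uniformly) close to the $\delta$-hyperbolic geodesic $\gamma_{\mathbb H}$, together with the divergence function of $\mathbb H^3$ and a lower bound on how much $\beta$ must backtrack through horoball regions, yields the required divergence estimate with a diverging function $f$ depending only on $\lambda$; Lemma \ref{lem:outside_neigh} then produces the uniform gauge $N_\lambda$, and any $N\geq N_\lambda$ works.

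The main obstacle I expect is property \eqref{item:cheese_to_Morse}, specifically verifying the hypothesis of Lemma \ref{lem:outside_neigh} uniformly in $x\in V_\lambda$ — one has to control competing paths $\beta$ that dip deeply into the horoballs $H_p$ that $\gamma'$ skirts, showing that such $\beta$ cannot shortcut $\gamma'$ because the $V_\lambda$-constraint keeps $\gamma'$ from spending long near any given $O_p$. The convexity of the horoballs and the flatness of the horospheres, together with the quantitative control on $r_p$ afforded by the definition of $V_\lambda$, are exactly what make this work, and this is the step where the ``control on the size of the removed balls'' mentioned in the text is essential.
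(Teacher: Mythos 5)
Your proposal is correct and follows essentially the same route as the paper: $\Psi$ is defined via Lemma \ref{lem:retract_neutered}, item (\ref{item:Morse_to_cheese}) comes from bounding horoball excursions of Morse rays and applying Lemma \ref{lem:horoball_geometry}, and item (\ref{item:cheese_to_Morse}) is verified through the divergence criterion of Lemma \ref{lem:outside_neigh}. The only (cosmetic) difference is in reducing to a ray living in a neutered space: you detour the $\mathbb H^3$-geodesic along the horospheres, whereas the paper uniformly shrinks the horoballs and transports the Morse property through a quasi-isometry of neutered spaces.
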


\begin{proof}
Throughout the proof, we fix basepoints $x_0=\iota(x_0)$ of $X$ and $\mathbb H^3$, and when discussing (quasi-)geodesic rays, we will assume that they are based at $x_0$.

We first define $\Psi \colon \partial_M X\to \Sph$. By Lemma \ref{lem:retract_neutered}, given $\ell \in \partial_M^{N} X$ and an $N$-Morse geodesic ray $\gamma$ representing $\ell$, we can define $\Psi(\ell)$ as the limit point of $\iota(\gamma)$. The fact that $\Psi$ is well-defined and continuous follows from the fact that if two $N$-Morse geodesic rays $\gamma,\gamma'$ have initial subgeodesics of length $L$ that stay within distance $C$ of each other, then $\iota(\gamma),\iota(\gamma')$ have the same property (since $\iota$ is $1$-Lipschitz). Injectivity follows from the fact that if the distance between $\gamma(t),\gamma'(t)$ diverges as $t$ goes to infinity, then the same is true for $\iota(\gamma),\iota(\gamma')$ since $\iota$ is a proper map.

In view of Lemma \ref{lem:horoball_geometry}, in order to prove item (\ref{item:Morse_to_cheese}), we have to show that given an $N$-Morse geodesic ray $\gamma$ in $X$, the geodesic ray $\alpha$ in $\mathbb H^3$ asymptotic to $\iota(\gamma)$ intersects each horoball $H_p$ in a set of diameter bounded by a constant depending only $N$. Since the Hausdorff distance between $\alpha$ and $\iota(\gamma)$ is bounded in terms of $N$, it suffices to bound the diameter of the intersection of $\iota(\gamma)$ with a suitable neighborhood of $H_p$.  In turn, since $\iota$ is a proper map, it suffices to do the same in $X$.  That is, we need to show that the diameter of the intersection of $\gamma$ with the $R$-neighborhood of $O_p$ is bounded in terms of $R$ and $N$.   To  see this, note that $O_p$ is a flat, and as the diameter of this intersection increases, we can find quasi-geodesic in the flat straying farther and farther away from $\gamma$.  Since $\gamma$ is $N$-Morse, the intersection must have bounded diameter.

In view of Lemma \ref{lem:horoball_geometry}, in order to prove item (\ref{item:cheese_to_Morse}), given a geodesic ray $\alpha$ in $\mathbb H^3$ that intersects any horoball $H_p$ in a set of diameter at most $C$, we have to find an $N$-Morse geodesic ray $\gamma$ in $X$ so that $\iota(\gamma)$ lies within finite Hausdorff distance of $\alpha$, where $N=N(C)$.

We first do the case $C=0$, and then show how to reduce to this case. If $C=0$, then $\alpha$ is contained in $X$, and moreover it is a geodesic ray in $X$ since $\iota$ is $1$-Lipschitz. We only have to argue that $\alpha$ is Morse in $X$, with controlled Morse gauge. By Lemma \ref{lem:outside_neigh}, it suffices to prove that whenever $\beta$ is a path (in $X$) intersecting the $R$-neighborhood of $\alpha$ only at its endpoints $x,y$, then $l(\beta)\geq f(R) d_X(x,y)-f(R)$, where $f$ is some fixed diverging function.  Consider such a path $\beta$, and regard it now as a path in $\mathbb H^3$. Since $\iota$ is proper, $\beta$ lies outside the $\rho(R)$-neighborhood of $\alpha$ (in $\mathbb H^3$), where $\rho$ is some diverging function. Moreover, $\iota$ is $1$-Lipschitz and hence the endpoints of $\beta$ are $R$-close to $\alpha$. Using the hyperbolicity of $\mathbb H^3$ (or even just that $\alpha$ is strongly contracting), one can then show that the length of $\beta$ is at least $g(\rho(R))d_{\mathbb H^3}(x,y)-2R$, for some diverging function $g$. Notice that $|d_{\mathbb H^3}(x,y)-d_X(x,y)|\leq 10R$, since $x,y$ lie $R$-close to $\alpha$ both in $X$ and in $\mathbb H^3$, and $\alpha$ is a geodesic in both $X$ and $\mathbb H^3$. Hence, we get the required inequality.

We are only left to reduce the case of a general $C$ to the case $C=0$, and we will do so by changing the neutered space. Given a geodesic ray $\alpha$ in $\mathbb H^3$ that intersects any horoball $H_p$ in a set of diameter at most $C$, we can regard it as a geodesic ray in a neutered space $X'$ containing $X$, in which we shrunk all the horoballs a uniform amount. The previous argument yields that $\alpha$ is a Morse geodesic ray in $X'$. There is a quasi-isometry $\phi:X'\to X$, whose constants depend only on $C$, that moves each point a bounded amount. Hence, $\phi(\alpha)$ is an $N$-Morse $(K,C)-$quasi-geodesic, with $N,K,C$ depending only on $C$. It is readily seen that a geodesic ray $\gamma$ within bounded Hausdorff distance from $\phi(\alpha)$ has the property that $\iota(\gamma)$ lies within finite Hausdorff distance of $\alpha$, and we are done.
\end{proof}

\subsection{Approximating strata with \sier curves}

As in the case of totally disconnected boundaries, we have to wiggle the strata in order for them to be actual \sier curves. The following proposition, which we prove later, is what will allow us to do this:
Recall that $P \subset \partial \mathbb H^3$ denotes the set of parabolic points for the group $G= \pi_1(M)$.  

\begin{proposition}\label{prop:circles}
 For each sufficiently small $0<\lambda\leq 1$, and each $p\in P$, there exists a circle $\gamma$ so that
 \begin{enumerate}
  \item $\gamma\subseteq B(p,3\lambda r_p/4)-B(p,\lambda r_p/4)=A$ and $\gamma$ is homotopically non-trivial in $A$,
  \item for each $p'\in P-\{p\}$ with $r_{p'}\leq r_p$ we have $\gamma\cap B(p',3\lambda r_{p'}/4)=\emptyset$.
  \end{enumerate}
\end{proposition}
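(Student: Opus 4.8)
Here is the plan I would follow.

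\medskip

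\textbf{Strategy and the one geometric input.} The plan is to reduce the statement to a fact about the family of round disks $B(p',\tfrac34\lambda r_{p'})$ lying near $p$ --- namely that the ones that come close to $A=B(p,\tfrac34\lambda r_p)-B(p,\tfrac14\lambda r_p)$ are tiny compared with $A$, and that their connected components cannot surround $p$ --- and then to produce $\gamma$ inside the open annulus by a planar-topology argument. The piece of hyperbolic geometry needed is the \emph{separation of horoballs}: since we chose disjoint embedded cuspidal neighborhoods, the Margulis lemma gives $\epsilon_0>0$ with $d_{\mathbb H^3}(\mathcal H_q,\mathcal H_{q'})\ge\epsilon_0$ for all $q\neq q'\in P$; by the inversive-distance estimate for horoballs (a short computation in the upper half-space model, together with the obvious finiteness of shallow horoballs for the far-apart pairs) this translates on $\Sph$ into
\[
 |q-q'|\ \ge\ c_0\sqrt{r_q\,r_{q'}}\qquad\text{for all }q\neq q'\in P,
\]
with $c_0=c_0(\epsilon_0)>0$ uniform, $|\cdot|$ the standard metric. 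Fix $\lambda$ small (below a threshold depending only on $c_0$ and absolute constants) and set $\mathcal O=\bigcup\{B(p',\tfrac34\lambda r_{p'}):p'\neq p,\ r_{p'}\le r_p\}$. Two elementary consequences of the displayed inequality: (a) if $B(p',\tfrac34\lambda r_{p'})$ meets $A$ then $|p-p'|\le\tfrac32\lambda r_p$, hence $r_{p'}\le(\tfrac{3\lambda}{2c_0})^2r_p$ and the disk has radius $O(\lambda^3 r_p)$, negligible against the width $\tfrac12\lambda r_p$ and the length $\asymp\lambda r_p$ of $A$; and (b) if two of the disks $B(p',\tfrac34\lambda r_{p'}),B(p'',\tfrac34\lambda r_{p''})$ overlap then their radii differ by a factor at least $1/\mu$, where $\mu=(\tfrac{3\lambda}{2c_0})^2\ll1$.

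\medskip

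\textbf{The crux.} Using (b) I would prove: \emph{each connected component $\Omega$ of $\mathcal O$ has a unique disk $D_*=B(p_*,\rho_*)$ of largest radius, is contained in $B(p_*,C\rho_*)$ for a universal $C$ (so $\diam\Omega\le 2C\rho_*$), and is cellular.} The idea is a scale-decay along chains of consecutively overlapping disks issuing from $D_*$: a neighbour of $D_*$ has radius $\le\mu\rho_*$ by (b) and maximality, and the triangle inequality played against the separation inequality rules out ``climbing back up'' in radius --- two near-maximal disks sitting within $O(\rho_*)$ of a common disk of radius $\le\mu\rho_*$ would be mutually within $O(\rho_*)$ while separation forces them $\gtrsim\rho_*/\lambda$ apart, absurd for small $\lambda$. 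Hence radii decay geometrically with combinatorial distance to $D_*$, which bounds $\diam\Omega$ by a geometric series, forbids a second maximal disk, and makes the intersection graph of $\Omega$ a tree (so $\Omega$ does not separate $\Sph$; alternatively one fills in its complementary holes to achieve cellularity). Combining with (a): the components of $\mathcal O$ meeting $A$ are pairwise disjoint cellular continua of diameter $O(\lambda^3 r_p)$, each contained in a disk inside the enlarged annulus $\widetilde A=B(p,\lambda r_p)-B(p,\tfrac18\lambda r_p)$ and null-homotopic there; since distinct components have disjoint largest disks, only finitely many exceed any diameter threshold, so they form a null family, and any one meeting $\partial A$ lies within $O(\lambda^3 r_p)$ of $\partial A$. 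I expect this claim to be the main obstacle; everything else is soft.

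\medskip

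\textbf{Topological conclusion.} Collapsing the (at most countably many) components of $\mathcal O$ meeting $A$ to points is an upper semicontinuous decomposition of $\overline{\widetilde A}$ into cellular continua, all in the interior, so by Moore's decomposition theorem the quotient is again a closed annulus $B$ with quotient map $q$ a homeomorphism off the collapsed components. Choose in $B$ an essential circle $\delta$ avoiding the countably many points $q(\Omega_i)$ and lying at a level bounded away from $\partial B$ (sweep the concentric circles of $B$, discarding the countably many bad levels); then $\gamma:=q^{-1}(\delta)$ is an embedded circle in $\widetilde A\setminus\bigcup_i\Omega_i$ contained in the open annulus $\{\tfrac14\lambda r_p<|z-p|<\tfrac34\lambda r_p\}$ and essential in $\widetilde A$, hence homotopically non-trivial in $A$: this is (1). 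For (2), an obstacle disk $B(p',\tfrac34\lambda r_{p'})$ with $r_{p'}\le r_p$ either misses $A$, hence is disjoint from $\gamma\subseteq A$, or lies in a component of $\mathcal O$ meeting $A$; the collapsed ones are avoided by construction and the ones touching $\partial A$ are avoided because $\gamma$ is kept a definite distance from $\partial A$ (this last point requires some routine bookkeeping, choosing the annular coordinate on $B$ compatibly with the radial structure of $A$, which is possible since the components near $\partial A$ are of size $\ll\lambda r_p$). One could also reach the conclusion via quasisymmetric-uniformization / bounded-turning technology, but the elementary route above suffices here.
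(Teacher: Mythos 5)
Your proposal is correct in outline, but it takes a genuinely different route from the paper. The paper builds $\gamma$ constructively: starting from a round circle it detours around the obstacle balls one scale at a time (the scales being separated by Lemma \ref{lem:separated_parabolics}, which guarantees that the balls treated at a given stage are pairwise disjoint), uses Mackay's machinery (Proposition \ref{prop-coarsestr} and Lemma \ref{lem-approx}) to tame each detoured arc and to guarantee that the Hausdorff limit of the arcs is still an arc, and then extracts the circle from the two limit arcs. You instead work with the whole obstacle set $\mathcal O$ at once: your separation inequality $|q-q'|\gtrsim\sqrt{r_qr_{q'}}$ is exactly the geometric input behind Lemma \ref{lem:separated_parabolics} (and follows from disjointness of the horoballs alone; Margulis is not needed), and you use it to show that each component of $\mathcal O$ is a small, null, cellular continuum, after which Moore's decomposition theorem and a Baire-type choice of level circle in the quotient annulus finish the job. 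The trade-off is clear: the paper's route gives quantitative (bounded-turning) control on $\gamma$ and generalizes to the quasi-plane setting of \cite{MS:quasi_planes}, at the cost of importing the quasi-arc technology; your route is purely topological and needs no metric control on $\gamma$ (which the proposition indeed does not ask for), at the cost of the component analysis and Moore's theorem. You are right that the component analysis is the crux, and your sketched mechanism does close it: along a simple chain of consecutively overlapping disks, an induction on the length of the chain, playing the triangle inequality against the separation inequality, shows that the sum of the radii is at most twice the largest radius, whence each component has diameter comparable to its largest disk and the family of components is null. Two small cautions: your parenthetical claim that the intersection graph of a component is a tree is not in general true (a large disk can overlap two nested smaller ones which also overlap each other), so you should rely on your stated alternative of filling in complementary holes to get cellularity, taking maximal filled components to keep them disjoint; and the ``routine bookkeeping'' ensuring the chosen level circle pulls back into $A$ itself (not just $\widetilde A$) should be done by saturating a slightly shrunk annulus before passing to the quotient, which also makes property (2) immediate since any obstacle disk meeting $\gamma\subseteq A$ would lie in a collapsed component.
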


\begin{cor}\label{cor:Sierpinski_between_cheese}
 For each sufficiently small $0<\lambda\leq 1$ there exists a \sier curve $S$ with $V_\lambda \subseteq S\subseteq V_{\lambda/4}$, with each peripheral circle of $S$ contained in some $B(p,3\lambda r_p/4)$.
\end{cor}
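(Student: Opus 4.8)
The plan is to produce $S$ by deleting, for each parabolic point $p\in P$, a topological disk $D_p$ that is a slight ``detour'' of the round disk $B(p,\lambda r_p)$ removed in forming $V_\lambda$, with $D_p$ trapped between $B(p,\lambda r_p/4)$ and $B(p,3\lambda r_p/4)$. The detours are exactly what Proposition \ref{prop:circles} provides. The difficulty is that the round disks $B(p,\lambda r_p)$ need not be pairwise disjoint (this is the whole reason the detours are needed), so the key point will be to show that the detoured disks can be organized into a laminar family whose maximal members have pairwise disjoint closures.

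For each $p\in P$ apply Proposition \ref{prop:circles} to get a circle $\gamma_p$ lying in the open annulus $A_p=B(p,3\lambda r_p/4)-B(p,\lambda r_p/4)$, essential there, and disjoint from $B(p',3\lambda r_{p'}/4)$ whenever $p'\neq p$ and $r_{p'}\leq r_p$. Let $D_p$ be the complementary Jordan domain of $\gamma_p$ in $\Sph$ that contains $p$. Because $\gamma_p$ is an essential simple closed curve in $A_p$, it separates the two boundary circles of $A_p$, so $D_p$ contains the inner ball $B(p,\lambda r_p/4)$ and misses the outer region $\Sph-\overline{B(p,3\lambda r_p/4)}$; hence
\[
 B(p,\lambda r_p/4)\ \subseteq\ D_p\ \subseteq\ \overline{D_p}\ \subseteq\ B(p,3\lambda r_p/4).
\]

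Next I would show $\{\overline{D_p}\}_{p\in P}$ is \emph{laminar}: for $p\neq p'$, the disks $\overline{D_p},\overline{D_{p'}}$ are disjoint or one lies in the interior of the other. Say $r_{p'}\leq r_p$. Then Proposition \ref{prop:circles}(2), combined with the displayed inclusion applied to $p'$, gives $\gamma_p\cap\overline{D_{p'}}=\emptyset$; since $\overline{D_{p'}}$ is connected it lies in a single complementary domain of the Jordan curve $\gamma_p$, i.e.\ $\overline{D_{p'}}\subseteq D_p$ or $\overline{D_{p'}}\subseteq\Sph-\overline{D_p}$, proving laminarity. (Nesting genuinely occurs in general, e.g.\ for a parabolic point $p'$ very close to $p$ with $r_{p'}$ very small, so the $\overline{D_p}$ really are not pairwise disjoint.) One then checks, using the same type of argument, that along a nested chain of the $D_p$ the radii $r_p$ must vary strictly monotonically; since the horoballs $H_p$ are disjoint and $G$ acts properly, $\{p:r_p\geq\epsilon\}$ is finite for every $\epsilon>0$ and the $r_p$ are bounded above, so ascending chains terminate. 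Therefore every $D_p$ is contained in a maximal one, and by laminarity the maximal disks have pairwise disjoint closures. This laminarity/maximality bookkeeping is the step I expect to require the most care.

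Finally, set $S=\Sph-\bigcup_{p\in P}D_p$. Since every $D_p$ lies in a maximal one, $\Sph\setminus S$ is the union of the maximal disks, whose closures are pairwise disjoint; these are therefore exactly the complementary components of $S$. Their diameters satisfy $\diam(\overline{D_p})\leq 3\lambda r_p/2\to 0$ since $r_p\to 0$, and $\bigcup_p\overline{D_p}\supseteq P$ is dense in $\Sph$ because the $G$-orbit of any parabolic point is dense in the limit set $\Sph$ of the lattice $G$. Hence $S$ is a \sier curve. The inclusions $B(p,\lambda r_p/4)\subseteq D_p\subseteq B(p,3\lambda r_p/4)\subseteq B(p,\lambda r_p)$ give $V_\lambda\subseteq S\subseteq V_{\lambda/4}$, and each peripheral circle of $S$ is the boundary $\gamma_p$ of some maximal $D_p$, which lies in $B(p,3\lambda r_p/4)$ by Proposition \ref{prop:circles}(1). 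This completes the plan.
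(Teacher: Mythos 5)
Your proof is correct and follows the same route as the paper: take for each $p$ the Jordan domain bounded by the circle from Proposition \ref{prop:circles}, observe that these domains are pairwise disjoint or nested, and pass to the complement. The paper's own proof is a two-sentence version of exactly this; your expansion (laminarity via Proposition \ref{prop:circles}(2), termination of ascending chains, and verification of Whyburn's characterization) just supplies the details the paper leaves implicit.
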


\begin{proof}
 Consider for each $p$ the open disk containing $p$ bounded by the circle $\gamma$ as in the previous lemma. Such disks are either disjoint or nested, so that we see that the complement of all the disks is the required \sier curve.
\end{proof}

We can now prove Theorem \ref{thm:Sierpinski_boundary}.

\begin{varthm}[Theorem \ref{thm:Sierpinski_boundary}.]
 Let $M$ be a finite-volume hyperbolic $3$-manifold with at least one cusp, and let $G=\pi_1(M)$. Then $\partial_M G$ is an $\omega$-\sier curve.
\end{varthm}

\begin{proof}
Since $G$ is quasi-isometric to the neutered space $X$, we can work with $\partial_M X$ instead. First of all, $\partial_M X$ is $\sigma$-compact since $X$ is $\mathrm{CAT}(0)$ by \cite[Main Theorem]{charney-sultan}. Hence, by Lemma \ref{lem:increasing_gauge} we have that $\partial_M X =\varinjlim \partial^{N}_M X$ can be chosen to be a countable limit over gauges $N_1, N_2, \ldots$. By Proposition \ref{prop:Morse=cheese} and Corollary \ref{cor:Sierpinski_between_cheese}, and provided that $N_1$ is large enough, there is a \sier curve $S_1$ so that $\partial^{N_1}_M X \subset S_1  \subset \partial^{N_{j(2)}}_M X$ for a sufficiently large $j(2)$. In fact, we can further require that $\lambda (N_{j(2)})<\lambda(N_1)/4$, for $\lambda(N)$ as in Proposition \ref{prop:Morse=cheese}. Applying Proposition \ref{prop:Morse=cheese} and Corollary \ref{cor:Sierpinski_between_cheese} again, we find $j(3)$ and another \sier curve $S_2$ so that $\partial^{N_{j(2)}}_M X \subset S_2  \subset \partial^{N_{j(3)}}_M X$, again with $\lambda (N_{j(3)})< \lambda(N_{j(2)})/4$. The condition $\lambda (N_{j(2)})<\lambda(N_1)/4$ ensures that $S_1$ is entwined in $S_2$. Proceeding inductively, we see that $\partial_M X$ is a limit of entwined \sier curves.  Thus, $\partial_M X$ is an $\omega$-\sier curve, as required.
\end{proof}

\subsection{Proof of Proposition \ref{prop:circles}}

We are only left to prove Proposition \ref{prop:circles}. We will follow arguments from \cite{MS:quasi_planes}, and the construction is roughly as follows. We start with a circle that possibly does not avoid all required balls around parabolic points, and we make detours to avoid balls of a certain size. We iterate the procedure for balls of smaller and smaller size, and then we take a limit. For technical reasons, we will work mostly with arcs rather than circles.

As in \cite{MS:quasi_planes}, in order to make this work we need machinery from \cite{Mac-08-quasi-arc}; we now state all relevant facts and definitions. First of all, after detouring, we would like the new circle to be close to the previous one, and this is captured by the following definition

\begin{definition}\label{def-iota-follows}
	For any $x$ and $y$ in an embedded arc $A$, let $A[x,y]$ be the closed, possibly trivial,
	subarc of $A$ that lies between them.
	
	An arc $B$ {\em $\iota$-follows} an arc $A$ if there
	exists a (not necessarily continuous) map $p:B \rightarrow A$, sending endpoints to endpoints,
	such that for all $x,\,y \in B$, $B[x,y]$ is in the $\iota$-neighborhood of
	$A[p(x),p(y)]$; in particular, $p$ displaces points at most $\iota$.
\end{definition}

In  \cite{Mac-08-quasi-arc}, Mackay defines a space $X$ to be \emph{$N$-doubling} if every ball can
be covered by at most $N$ balls of half the radius and \emph{$L$-linearly connected}  if for all $x,y \in X$ there exists a compact, connected subset $Y$ containing $x,y$ of diameter less than or equal to $Ld(x, y).$   

The following proposition will be used to remove unwanted detours from our circles (which in the limit might create unwanted topology). 

\begin{proposition}[{\cite[Proposition 2.1]{Mac-08-quasi-arc}}]\label{prop-coarsestr}
 Given a complete metric space $X$ that is
 $L$-linearly connected and $N$-doubling, there exist constants
 $s=s(L,N)>0$ and $S=S(L,N)>0$ with the following property:
 for each $\iota > 0$ and each arc $A \subset X$, there exists an arc
 $J$ that $\iota$-follows $A$, has the same endpoints as $A$,
 and satisfies
 \begin{equation} \label{eq-cqa}
  \forall x,y \in J,\  d(x,y) < s\iota \implies
   \diam(J[x,y]) < S\iota.
 \end{equation}
\end{proposition}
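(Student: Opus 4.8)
The plan is to produce $J$ by excising from $A$ all the large ``excursions'' that return close to their starting point, replacing each by a short connecting arc supplied by linear connectivity. The first ingredient I would establish is a shortcut sub-lemma: in a complete $L$-linearly connected space, any two points $x,y$ are joined by an embedded arc of diameter at most $C_L\, d(x,y)$, where $C_L$ depends only on $L$. This follows from the standard dyadic chaining argument (iterate the linear-connectivity continua on successive halves, pass to a limit, and extract an arc from the resulting continuum); it is exactly the tool used in \cite{Mac-08-quasi-arc, MS:quasi_planes}. With this in hand I would fix $s \le 1/(2C_L)$, so that whenever $d(A(u),A(v)) < s\iota$ the connecting arc $\beta_{u,v}$ joining $A(u)$ to $A(v)$ has diameter at most $C_L s\iota \le \iota/2$.

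Next I would select the excursions to cut. Call a subarc $A[u,v]$ a \emph{detour} if $d(A(u),A(v)) < s\iota$ while $\diam A[u,v] \ge S\iota$, for a constant $S=S(L,N)$ to be fixed. The key point is to choose a family of \emph{pairwise disjoint} detours, selected with respect to the original arc $A$, rather than iterating the cutting operation (which would degrade the following constant $\iota$ by a factor at each step). I would build such a family by a greedy left-to-right scan: starting from a current basepoint $A(t_0)$, let $t_1$ be the last parameter with $d(A(t_1),A(t_0)) \le s\iota$, cut $A[t_0,t_1]$ if it is a detour, and continue from $t_1$; repeating this produces disjoint cut intervals. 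Here the $N$-doubling hypothesis enters: it bounds how many disjoint excursions of diameter $\ge S\iota$ can overlap a fixed region, which guarantees that the chosen family is locally finite at scale $s\iota$ and that the induced limiting path is an \emph{embedded} arc rather than a degenerate or self-touching object. This is also where the dependence of $s,S$ on $N$ comes from.

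Defining $J$ to be $A$ with each selected detour $A[u_i,v_i]$ replaced by the short arc $\beta_{u_i,v_i}$, I would then verify the two required properties. For $\iota$-following, because the cuts are disjoint I can use the single map $p\colon J\to A$ that is the identity on the retained pieces of $A$ and collapses each $\beta_{u_i,v_i}$ onto the corresponding subarc $A[u_i,v_i]$; since $\beta_{u_i,v_i}$ has diameter at most $\iota/2$ and contains the endpoint $A(u_i)\in A[u_i,v_i]$, each replacement arc lies in the $\iota$-neighborhood of the subarc it is mapped to, and the subarc condition of Definition \ref{def-iota-follows} follows for arbitrary pairs by splitting $J[x,y]$ into retained pieces (which land in $A[p(x),p(y)]$ verbatim) and replacement arcs (which are $\iota$-close to it). For the coarse estimate \eqref{eq-cqa}, I would argue by contradiction: a pair $x,y\in J$ with $d(x,y)<s\iota$ and $\diam J[x,y]\ge S\iota$ would, after pushing the finitely many small replacement arcs inside $J[x,y]$ back onto $A$ and using $C_L s\le 1/2$, produce a detour of $A$ of diameter $\ge S\iota/2$ with near-endpoints that the greedy scan was designed to remove, a contradiction once $S$ is chosen large compared to $C_L$ and the multiplicity bound from doubling.

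The step I expect to be the main obstacle is making the excursion-cutting \emph{simultaneous and self-consistent}: after excising the chosen detours one must be sure that no \emph{new} large excursion is created by concatenating retained pieces across several short replacement arcs, and that the greedy family is genuinely exhaustive, so that \eqref{eq-cqa} holds for \emph{all} close pairs of $J$, including arbitrarily close ones coming from tight returns. Controlling this requires the careful interplay between the disjointness of the cuts (which preserves the single following map and hence the constant $\iota$), the diameter bound $\le \iota/2$ on each replacement (which prevents replacements from themselves spanning detours), and the doubling bound on how many excursions can overlap a given region (which forces the scan to terminate with locally finite output and yields an honest embedded arc in the limit).
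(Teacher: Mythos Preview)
The paper does not contain a proof of this proposition: it is quoted verbatim from \cite{Mac-08-quasi-arc} and used as a black box in the proof of Proposition~\ref{prop:circles}. So there is nothing in the present paper to compare your argument against.

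That said, your outline is broadly the right shape and is close in spirit to Mackay's original argument: one discretizes or scans the arc at scale $\iota$, uses linear connectivity to build short replacement arcs, and uses doubling to control multiplicities. The step you flag as the main obstacle --- ensuring that the greedy pass is exhaustive and that no new large excursions are created by concatenating across several replacements --- is indeed the delicate part, and your sketch does not yet close it. In particular, the greedy rule ``let $t_1$ be the last parameter with $d(A(t_1),A(t_0))\le s\iota$'' need not yield a finite or even locally finite family without further use of doubling, and the contradiction argument for \eqref{eq-cqa} must handle pairs $x,y$ lying on (possibly different) replacement arcs, where the pushed-back endpoints may be at distance up to $s\iota+\iota$ rather than $s\iota$. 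These are exactly the places where the constants $s,S$ have to be chosen carefully relative to both $L$ and $N$; if you want a complete proof you should consult \cite{Mac-08-quasi-arc} directly.
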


The following lemma will allow us to take limits.

\begin{lemma}[{\cite[Lemma 2.2]{Mac-08-quasi-arc}}]\label{lem-approx}
 Suppose $(X,d)$ is an $L$-linearly connected, $N$-doubling, complete
 metric space, and let  $s,\, S,\, \varepsilon$ and $\delta$
 be fixed positive constants satisfying
 $\delta \leq \min\{\frac{s}{4+2S},\frac{1}{10}\}$.
 If we have a sequence of arcs $J_1, J_2, \ldots, J_n, \ldots$ in $X$,
  such that for every $n \geq 1$
 \begin{itemize}
  \item $J_{n+1}$ $\varepsilon \delta^n$-follows $J_n$, and
  \item $J_{n+1}$ satisfies \eqref{eq-cqa} with
   $\iota = \varepsilon \delta^n$ and $s,\,S$ as fixed above,
 \end{itemize}
 then the Hausdorff limit $J = \lim_\mathcal{H} J_n$
 exists, and is an arc.
 Moreover, the endpoints of $J_n$ converge to the endpoints of $J$.
 \end{lemma}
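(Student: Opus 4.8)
The plan is to realize $J$ as the image of a limit parametrization and to extract injectivity of that limit from the bottleneck condition \eqref{eq-cqa}. Fix for each $n$ a homeomorphism $f_n\colon[0,1]\to J_n$ sending $0,1$ to the endpoints of $J_n$, chosen \emph{compatibly} in the sense that $\sup_{t}d\bigl(f_n(t),f_{n+1}(t)\bigr)\le C\varepsilon\delta^{n}$ for a constant $C=C(L,N,s,S)$; I explain below how the hypotheses produce such a choice. Granting it, the geometric series $\sum_n\varepsilon\delta^{n}<\infty$ shows that $(f_n)$ is uniformly Cauchy, so by completeness of $X$ it converges uniformly to a continuous map $f\colon[0,1]\to X$. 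Uniform convergence of the $f_n$ forces Hausdorff convergence of their images, since $d_{\mathcal H}\bigl(J_n,f([0,1])\bigr)\le\sup_t d(f_n(t),f(t))\to0$; thus the Hausdorff limit $J=\lim_{\mathcal H}J_n$ exists and equals $f([0,1])$, and $f_n(0)\to f(0)$, $f_n(1)\to f(1)$. Once $f$ is shown to be injective, it is an embedding (as $[0,1]$ is compact and $X$ Hausdorff), so $J$ is an arc and $f(0),f(1)$ are its endpoints, which gives the final assertion.

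The construction of the compatible $f_n$ is the technical core. The relation that $J_{n+1}$ $\varepsilon\delta^{n}$-follows $J_n$ supplies, by Definition \ref{def-iota-follows}, an endpoint-preserving correspondence $p_n\colon J_{n+1}\to J_n$ displacing points at most $\varepsilon\delta^n$ and sending subarcs into $\varepsilon\delta^n$-neighborhoods of subarcs. My first step is to upgrade this crude, possibly discontinuous $p_n$ to an honest homeomorphism $g_n\colon J_n\to J_{n+1}$ with $\sup_{a\in J_n}d(a,g_n(a))\le C\varepsilon\delta^n$, after which $f_{n+1}:=g_n\circ f_n$ automatically satisfies the displacement bound above. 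To build $g_n$ I would exploit the linear orders carried by the two arcs: $L$-linear connectivity and $N$-doubling let one cover each arc by controlled chains of small pieces, while \eqref{eq-cqa} for $J_{n+1}$ guarantees that metrically close points of $J_{n+1}$ bound subarcs of small diameter. This bottleneck property is exactly what makes the arc order on $J_{n+1}$ coarsely recoverable from the metric, so that $p_n$ is coarsely monotone and can be straightened into an order-preserving homeomorphism close to the identity. I would simultaneously arrange that each $f_n$ is \emph{metrically comparable}, i.e.\ that $|t-t'|$ is controlled by a normalized diameter of the subarc $J_n[f_n(t),f_n(t')]$, uniformly in $n$; this too follows from the chain covers together with \eqref{eq-cqa}.

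It remains to prove $f$ injective. Suppose $f(t)=f(t')$ with $t<t'$. Summing the displacement bounds gives $d(f_n(t),f(t))\le B\varepsilon\delta^n$ and likewise for $t'$, where $B=C/(1-\delta)$, so $d\bigl(f_n(t),f_n(t')\bigr)\le 2B\varepsilon\delta^{n}$. Here the hypothesis $\delta\le s/(4+2S)$ enters: it yields $s\,\varepsilon\delta^{\,n-1}=(s/\delta)\,\varepsilon\delta^{n}\ge(4+2S)\varepsilon\delta^{n}$, and since the construction is arranged so that $2B\le 4+2S$, we get $d(f_n(t),f_n(t'))\le s\,\varepsilon\delta^{\,n-1}$. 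Feeding this into \eqref{eq-cqa} for $J_n$ at scale $\iota=\varepsilon\delta^{\,n-1}$ gives $\diam\bigl(J_n[f_n(t),f_n(t')]\bigr)<S\varepsilon\delta^{\,n-1}\to0$. By the metric comparability of the parametrizations this forces $|t-t'|\to0$, whence $t=t'$. Thus $f$ is injective and $J$ is an arc. The remaining hypothesis $\delta\le\tfrac1{10}$, together with $N$-doubling, is what keeps the number and total displacement of the detour pieces bounded when straightening $p_n$ into $g_n$, so that a constant $C$ with $2B\le 4+2S$ exists.

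I expect the main obstacle to be the middle step: converting the following correspondence $p_n$ into genuinely nearby order-preserving homeomorphisms $g_n$ with metrically comparable parametrizations. This is the only place where the full strength of linear connectivity, doubling, the bottleneck condition \eqref{eq-cqa}, and the precise value of $\delta$ are all needed, and it is precisely what prevents the limit $J$ from self-touching or degenerating; once it is in place, the Hausdorff convergence and the injectivity argument are routine telescoping.
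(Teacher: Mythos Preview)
The paper does not prove this lemma; it is quoted verbatim from \cite[Lemma~2.2]{Mac-08-quasi-arc} and used as a black box in the proof of Proposition~\ref{prop:circles}. So there is no in-paper argument to compare your proposal against.

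As for the proposal itself: the overall strategy---produce compatible parametrizations $f_n$, pass to a uniform limit $f$, and use the bottleneck condition \eqref{eq-cqa} to force injectivity---is the right shape, and is close to how Mackay's original argument proceeds. But you have correctly identified the real gap and then not closed it. Upgrading the (possibly discontinuous, possibly non-monotone) following map $p_n$ to an order-preserving homeomorphism $g_n$ with a \emph{specific} displacement bound $C\varepsilon\delta^n$ is the entire content of the lemma, and your sketch of this step (``chain covers'', ``coarsely monotone'', ``can be straightened'') is not an argument. More concretely, your injectivity step needs $2B\le 4+2S$ with $B=C/(1-\delta)$, i.e.\ $C\le(2+S)(1-\delta)$; but $C$ is produced by the unspecified straightening construction, and you give no mechanism for controlling it in terms of $s,S$ alone. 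Asserting that the remaining hypothesis $\delta\le 1/10$ ``is what keeps \dots\ a constant $C$ with $2B\le 4+2S$ exists'' is circular: you need the straightening to output a small enough $C$ \emph{first}, and only then can you check whether the numerology closes up.

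A second, smaller issue: your ``metric comparability'' of the $f_n$ (that $|t-t'|$ is controlled by the diameter of the subarc) is asserted to hold uniformly in $n$, but nothing in the hypotheses prevents the arcs $J_n$ from becoming arbitrarily long or thin in the parameter, so this needs an actual argument tied to the inductive construction of the $f_n$. Without it, the final step ``$\diam(J_n[f_n(t),f_n(t')])\to 0$ forces $|t-t'|\to 0$'' does not follow.
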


Finally, we need a lemma about the geometry of horoballs.

\begin{lemma}\label{lem:separated_parabolics}
 There exists $\lambda_0$ so that for each $\lambda_1,\lambda_2\leq \lambda_0$, if $B(p,\lambda_1r_p)\cap B(p',\lambda_2r_{p'}) \neq \emptyset$ then either $r_{p'}\leq r_{p}/100$ or $r_p\leq r_{p'}/100$.
\end{lemma}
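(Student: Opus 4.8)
The plan is to reduce the statement to the elementary fact that a family of disjoint horoballs in $\mathbb H^3$ cuts out on $\Sph=\partial\mathbb H^3$ a family of round balls whose radii are comparable to the $r_p$ and which are pairwise disjoint. Concretely, I would first prove the following key estimate: there is a universal constant $c_0\in(0,1)$ such that for any two disjoint horoballs $H_p,H_{p'}$ based at distinct points $p,p'$,
\[
 d_{\Sph}(p,p')\ \geq\ c_0\sqrt{r_p\,r_{p'}}.
\]

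To establish this, work in the upper half-space model $\mathbb H^3=\mathbb C\times\R_{>0}$ with $x_0=(0,1)$, and first assume $\infty\notin\{p,p'\}$. Then $H_p$ is a Euclidean ball tangent to $\mathbb C\times\{0\}$ at $p$ of some Euclidean diameter $\rho_p$, and likewise $H_{p'}$; comparing centers and radii shows that $H_p$ and $H_{p'}$ are disjoint if and only if $|p-p'|^2\geq\rho_p\rho_{p'}$. Computing the hyperbolic distance from $x_0$ to $H_p$ (for instance by applying the Möbius map $z\mapsto 1/(z-p)$, which sends $p$ to $\infty$ and $H_p$ to a horoball $\{t\geq k\}$) gives the standard comparison $r_p\asymp\rho_p/(1+|p|^2)$, with universal multiplicative constants. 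Finally, the visual metric on $\Sph$ based at $x_0$ is bi-Lipschitz equivalent to the chordal metric $\sigma(p,p')=|p-p'|/\sqrt{(1+|p|^2)(1+|p'|^2)}$. Combining these three facts,
\[
 d_{\Sph}(p,p')^2\ \asymp\ \frac{|p-p'|^2}{(1+|p|^2)(1+|p'|^2)}\ \geq\ \frac{\rho_p\rho_{p'}}{(1+|p|^2)(1+|p'|^2)}\ \asymp\ r_p\,r_{p'},
\]
which is the key estimate. The case $\infty\in\{p,p'\}$, say $p=\infty$ with $H_p=\{t\geq h\}$, $r_p=1/h$, and disjointness amounting to $\rho_{p'}\leq h$, is handled by the same computation and is in fact easier; the degenerate possibility that $x_0$ lies in one of the (at most one) horoballs merely forces the corresponding $r$ to equal $1$ and causes no trouble.

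Granting the key estimate, the lemma is pure bookkeeping. Suppose $B(p,\lambda_1 r_p)\cap B(p',\lambda_2 r_{p'})\neq\emptyset$ with $\lambda_1,\lambda_2\leq\lambda_0$; then $d_{\Sph}(p,p')<\lambda_1 r_p+\lambda_2 r_{p'}\leq\lambda_0(r_p+r_{p'})$. Assuming without loss of generality $r_p\leq r_{p'}$, so $r_p+r_{p'}\leq 2r_{p'}$, the key estimate gives $c_0\sqrt{r_p r_{p'}}\leq 2\lambda_0 r_{p'}$, hence $r_p/r_{p'}\leq 4\lambda_0^2/c_0^2$. Thus, fixing $\lambda_0:=\min\{1,c_0/20\}$ once and for all, we obtain $r_p/r_{p'}\leq 1/100$, i.e.\ $r_p\leq r_{p'}/100$, which is one of the two alternatives in the statement.

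The main obstacle is the key estimate, and within it the only genuine point is the comparison $r_p\asymp\rho_p/(1+|p|^2)$ (equivalently, the computation of $d(x_0,H_p)$ up to a universal additive constant); everything else is a one-line manipulation. An alternative, model-free route to the key estimate goes through the geodesic $\beta$ joining $p$ to $p'$: disjointness forces the portion of $\beta$ lying outside $H_p\cup H_{p'}$ to have nonnegative length, and comparing this with the Busemann functions of $p$ and $p'$ along $\beta$ and with the point of $\beta$ nearest to $x_0$ yields $(p,p')_{x_0}\leq\tfrac12\big(d(x_0,H_p)+d(x_0,H_{p'})\big)+O(\delta)$, hence $d_{\Sph}(p,p')=e^{-(p,p')_{x_0}}\gtrsim\sqrt{r_p r_{p'}}$; but this requires a small case analysis according to where the nearest point sits relative to the two horoballs, so the explicit upper half-space computation is cleaner.
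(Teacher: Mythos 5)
Your argument is correct, but it takes a genuinely different route from the paper's. The paper's proof is soft and coordinate-free: by Lemma \ref{lem:horoball_geometry}, a point of $B(p,\lambda_1 r_p)\cap B(p',\lambda_2 r_{p'})$ is the endpoint of a geodesic ray from $x_0$ that penetrates deeply into both $H_p$ and $H_{p'}$ (with penetration depth tending to infinity as $\lambda_0\to 0$); since the horoballs are disjoint, the ray must finish its long excursion through one before entering the other, so the second horoball lies at distance at least $d(x_0,H_{\mathrm{first}})+\log 100$ from $x_0$, which is exactly the dichotomy $r_{\mathrm{second}}\le r_{\mathrm{first}}/100$. You instead establish the sharp packing estimate $d_{\Sph}(p,p')\ge c_0\sqrt{r_p\,r_{p'}}$ for the shadows of disjoint horoballs by direct computation in the upper half-space model, after which the lemma is triangle-inequality bookkeeping. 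Your three ingredients all check out: disjointness of the tangent balls is precisely $|p-p'|^2\ge\rho_p\rho_{p'}$; the Busemann-function computation gives $d(x_0,H_p)=\log\bigl((1+|p|^2)/\rho_p\bigr)$, hence $r_p=\rho_p/(1+|p|^2)$ exactly when $x_0\notin H_p$ (and the inequality you need still holds in the degenerate case); and the visual metric based at $x_0$ is bi-Lipschitz to the chordal metric. Your constants also work: $r_p/r_{p'}\le 4\lambda_0^2/c_0^2\le 1/100$ for $\lambda_0=c_0/20$. What your approach buys is an explicit constant and a strictly stronger conclusion -- the shadows $B(p,r_p)$ form a genuine conformal packing, of which the lemma's either/or statement is a coarse shadow. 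What the paper's approach buys is that it reuses machinery already in place (Lemma \ref{lem:horoball_geometry}) and avoids any choice of model, so it transfers to settings (e.g.\ general relatively hyperbolic groups acting on cusped spaces) where no explicit coordinates are available.
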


\begin{proof}
 In view of Lemma \ref{lem:horoball_geometry}, a point in $B(p,\lambda_1r_p)\cap B(p',\lambda_2r_p')$ is represented by a ray from the basepoint that spends a long time in both horoballs $H_p$ and $H_{p'}$.  If the geodesic ray goes through $H_p$ first (the other case is symmetric), then $H_{p'}$ is much further away from the basepoint than $H_p$.  Choosing an appropriate value for $\lambda_0$, we can make the segment inside $H_p$ as long as we want, and thus control the distance to $H_{p'}$. In particular, we can guarantee that $r_{p'}\leq r_{p}/100$.
\end{proof}

We are now ready to prove Proposition \ref{prop:circles}.

\begin{proof}[Proof of Proposition \ref{prop:circles}.]
 We can assume $\lambda\leq \lambda_0$, for $\lambda_0$ as in Lemma \ref{lem:separated_parabolics}.
 
 Start with the circle $\gamma_1$ of radius $\lambda/2$ around $p$. We think of $\gamma_1$ as the union of two arcs $J_1,J'_1$ joining diametrically opposite points.
 
 We will construct a sequence of arcs $J_n,J'_n$ with the same endpoints that have the following properties.
 
 \begin{itemize}
 \item $J_{n+1}$ satisfies \eqref{eq-cqa} of Proposition \ref{prop-coarsestr}, with $\iota=50^{-n}\lambda r_p$.
 \item $J_{n+1}$  $(5\times 50^{-n}\lambda r_p)$-follows $J_n$. 
  \item For each $p'\in P$ with $50^{-n-1} r_p < r_{p'}\leq r_p 50^{-n}$, we have $J_{n+1}\cap B(p',\lambda 50^{-n}r_{p})=\emptyset$.
 \end{itemize}
  and similarly for the sequence $J_n'$.

Assume we have constructed $J_i,J_i'$ for $i \leq n$.  We now construct $J_{n+1}$ and $J'_{n+1}$. If $J_n$ and $J'_n$ do not intersect any $B_{p'}=B(p',2\times 50^{-n}\lambda r_{p})$ with $50^{-n-1} r_p \leq r_{p'}\leq r_p 50^{-n}$, we can take $J_{n+1}=J_n$ and  $J'_{n+1}=J'_n$ and we are done (notice the ``2'' for later purposes). Otherwise, the $B_{p'}$ they intersect are disjoint by Lemma \ref{lem:separated_parabolics}. We can then replace each maximal segment of $J_n$ or $J'_n$ contained in some $B_{p'}$ with an arc along the boundary of $B_{p'}$. If the segment is initial or terminal, we ensure that we move the corresponding endpoints of $J_n$ and $J'_n$ to the same point. In this way we obtain immersed paths, which are not necessarily arcs, but we can restrict to arcs contained in the image of such paths. By applying this procedure, we obtain arcs $K_n$, $K'_n$ that $(4\times 50^{-n}\lambda r_p)$-follow $J_n$ and $J'_n$ (the constant is the upper bound on the diameters of the $B_{p'}$).
 
We now apply Proposition \ref{prop-coarsestr} with $\iota= 50^{-n}\lambda r_p$, obtaining arcs $J_{n+1}$ and $J'_{n+1}$, satisfying $(1)$, that $(50^{-n}\lambda r_p)$-follow $K_n$ and $K'_n$, and hence $(5\times 50^{-n}\lambda r_p)$-follow $J_n$ and $J'_n$. It is straightforward to check that $J_{n+1},J'_{n+1}$ have the required properties.

 According to Lemma \ref{lem-approx}, there are limit arcs $J$ and $J'$, that clearly share the same endpoints. While these may intersect in multiple points, there exist a pair of subarcs sharing only endpoints, that form a circle $\gamma$ around $p$. We now check that $\gamma$ satisfies the required properties.

 To check that $ \gamma\subseteq B(p,3\lambda r_p/4)-B(p,\lambda r_p/4)$, it suffices to prove the analogous containment for $J\cup J'$. Because of the second property of $J_{n+1}$, we have that $J_{n+1}$ is contained in the closed neighborhood around $J_1$ of radius
 $$5\lambda r_p (50^{-1}+50^{-2}+\dots )\leq \lambda r_p/9.$$
 A similar statement holds for $J'$, and since $J_1\cup J'_1$ is the circle of radius $\lambda r_p/2$ around $p$, we are done.
 
Moreover, the same computation as above also yields that $J$ $(\lambda r_p/9)$-follows $J_1$, and similarly for $J'$. One can use this to show that the concatenation $\gamma'$ of $J$ and $J'$ is homotopic to $\gamma_1$ in $A=B(p,3\lambda r_p/4)-B(p,\lambda r_p/4)$: Subdivide each into small arcs and retract arcs in $\gamma'$ to corresponding arcs in $\gamma_1$ along geodesics in $\partial \mathbb H^3$. Also, $\gamma'$ and $\gamma$ are homotopic since they only differ in the $2\lambda r_p/9$-neighborhood of the endpoints of $J,J'$.
 
 Hence, $\gamma$ is homotopically non-trivial.

 Now let $p'\in P-\{p\}$ with $r_{p'}\leq r_p$. If $r_{p'}\geq r_p/50$, then applying Lemma \ref{lem:separated_parabolics} with $\lambda_1=3\lambda/4$ and $\lambda_2=\lambda$, shows that $\gamma$ is disjoint from $B(p',\lambda r_{p'})$. Hence, suppose $50^{-n-1} r_p < r_{p'}\leq 50^{-n} r_p$, for some $n\geq 1$. Then we have that $J_{n+1}\cup J'_{n+1}$ does not intersect $B(p',\lambda 50^{-n}r_{p})$. Arguing as above, $J\cup J'$ is contained in the neighborhood around $J_{n+1}\cup J'_{n+1}$ of radius
 $$5\lambda r_p (50^{-n-1}+50^{-n-2}+\dots )\leq 50^{-n}\lambda r_p/10.$$  
 
Hence, $J\cup J'$ avoids the ball of radius $(50^{-n}\lambda r_{p})(1-1/10)\geq 3\lambda r_{p'}/4$ around $p'$, as required.
\end{proof}

\bibliographystyle{alpha}
\bibliography{omega_boundaries}
\end{document}